\numberwithin{equation}{section}
\newtheorem{theorem}{Theorem}[section]
\newtheorem{lemma}[theorem]{Lemma}
\newtheorem{proposition}[theorem]{Proposition}
\newtheorem{corollary}[theorem]{Corollary}
\theoremstyle{definition}
\newtheorem{procedure}[theorem]{Procedure}
\newtheorem{example}[theorem]{Example}
\begin{document}

\title[Normality criteria for monomial ideals]
{Normality criteria for monomial ideals} 
\thanks{The first and third authors were supported by SNI, M\'exico. 
The second author was supported by a scholarship from CONACYT, M\'exico.}

\author[L. A. Dupont]{Luis A. Dupont}
\address{
Facultad de Matem\'aticas\\
Universidad Veracruzana\\ 
Circuito Gonzalo Aguirre Beltr\'an S/N
Zona Universitaria\\
Xalapa, Veracruz, M\'exico, CP 91090.
}
\email{ldupont@uv.mx}

\author[H. Mu\~noz-George]{Humberto Mu\~noz-George}
\address{
Departamento de
Matem\'aticas\\
Centro de Investigaci\'on y de Estudios
Avanzados del
IPN\\
Apartado Postal
14--740 \\
Ciudad de M\'exico, M\'exico, CP 07000.
}
\email{munozgeorge426@gmail.com}

\author[R. H. Villarreal]{Rafael H. Villarreal}
\address{
Departamento de
Matem\'aticas\\
Centro de Investigaci\'on y de Estudios
Avanzados del
IPN\\
Apartado Postal
14--740 \\
Ciudad de M\'exico, M\'exico, CP 07000.
}
\email{vila@math.cinvestav.mx}

\keywords{Normal monomial ideal, integral closure,
integer rounding property, Hilbert basis}
\subjclass[2020]{Primary 13C70; Secondary 13B22, 13F55, 13A30, 05E40} 

\begin{abstract}
In this paper we study the normality of monomial ideals using linear
programming and graph theory. We give
normality criteria for monomial 
ideals, for ideals generated by monomials of degree two, and for edge 
ideals of graphs and clutters and their ideals of covers.  
\end{abstract}

\maketitle 

\section{Introduction}\label{section-intro}

Let $\mathcal C$ be a \textit{clutter} with vertex 
set $V(\mathcal{C})=\{t_1,\ldots,t_s\}$, that is, $\mathcal C$ is a 
family of subsets $E(\mathcal{C})$ of $V(\mathcal{C})$, called edges,
none of which is contained in
another \cite[Chapter~1]{cornu-book}. For example, a graph (no multiple edges or loops) is a
clutter.  Regarding each vertex $t_i$ as a
 variable, we consider the polynomial ring $S=K[t_1,\ldots,t_s]$ over
 a field $K$. The monomials of $S$ are denoted
by $t^a:=t_1^{a_1}\cdots t_s^{a_s}$, $a=(a_1,\dots,a_s)$ in $\mathbb{N}^s$, where
$\mathbb{N}=\{0,1,\ldots\}$. In particular $t_i=t^{e_i}$, where 
$e_i$ is the $i$-th unit vector in $\mathbb{R}^s$. The
 \textit{edge ideal} of $\mathcal{C}$, denoted $I(\mathcal{C})$, 
is the ideal of $S$ given by 
$$I(\mathcal{C}):=(\textstyle\{\prod_{t_i\in e}t_i\mid e\in
E(\mathcal{C})\}).$$
\quad The
minimal set of generators of $I(\mathcal{C})$, denoted
$\mathcal{G}(I(\mathcal{C}))$, is  the
set of all squarefree monomials $t_e:=\prod_{t_i\in e}t_i$ such 
that $e\in E(\mathcal{C})$. 
Any squarefree monomial ideal $I$ of $S$ is the edge
ideal $I(\mathcal{C})$ of a clutter $\mathcal{C}$ with vertex set
$V(\mathcal{C})=\{t_1,\ldots,t_s\}$.  

A set of vertices $C$ of $\mathcal{C}$ is called a \textit{vertex
cover} if every edge of $\mathcal{C}$ contains at least one vertex of
$C$. A  \textit{minimal vertex cover} of $\mathcal{C}$ is a vertex cover which
is minimal with respect to inclusion. The clutter of minimal vertex
cover of $\mathcal{C}$, denoted $\mathcal{C}^\vee$, is called the
\textit{blocker} of $\mathcal{C}$, and the edge ideal 
$I(\mathcal{C}^\vee)$ of $\mathcal{C}^\vee$ is called the
\textit{ideal of covers} of $\mathcal{C}$ and is denoted by
$I_c(\mathcal{C})$  \cite[p.~221]{monalg-rev}. 

Let $I$ be a monomial ideal of $S$ and let
$\mathcal{G}(I):=\{t^{v_1},\ldots,t^{v_q}\}$ be the minimal set of
generators of $I$. The \textit{incidence matrix} of $I$, denoted by $A$,
is the $s\times q$ matrix with column vectors $v_1,\ldots,v_q$. If $I$
is the edge ideal of a clutter $\mathcal{C}$, this
matrix is called the \textit{incidence matrix} of $\mathcal{C}$
\cite[p.~6]{Schr}. The
$\textit{covering polyhedron}$ of $I$, denoted by
$\mathcal{Q}(I)$, is the rational polyhedron
$$
\mathcal{Q}(I):=\{x\mid x\geq 0;\,xA\geq 1\},
$$
where $1=(1,\ldots,1)$. The map
$\mathcal{C}^\vee\rightarrow\{0,1\}^s$, $C\mapsto\sum_{t_i\in C}e_i$, 
induces a bijection between $\mathcal{C}^\vee$ and the set of integral vertices
of $\mathcal{Q}(I)$ \cite[Corollary~13.1.3]{monalg-rev}. 

The \textit{Newton
polyhedron} of a monomial ideal $I$, denoted ${\rm NP}(I)$, is the 
rational polyhedron 
\begin{equation*}
{\rm NP}(I)=\mathbb{R}_+^s+{\rm 
conv}(v_1,\ldots,v_q),
\end{equation*}
where $\mathbb{R}_+=\{\lambda\in\mathbb{R}\mid \lambda\geq
0\}$. Given an integer $n\geq 1$, the \textit{integral closure} of
 $I^n$, denoted $\overline{I^n}$, can be described as 
\begin{equation}\label{jun21-21}
\overline{I^n}=
(\{t^a\mid a/n\in{\rm NP}(I)\})=(\{t^a\mid
\langle a, u_i\rangle\geq n\mbox{ for }i=1,\ldots,p\}),
\end{equation}
where $\langle\ ,\, \rangle$ denotes the 
ordinary inner product and $u_1,\ldots,u_p$ are the vertices of
$\mathcal{Q}(I)$ \cite[Theorem~3.1, Proposition~3.5]{reesclu}. 
If $I=\overline{I}$, $I$ is said to be \textit{integrally closed}. 
If $I^n=\overline{I^n}$ for all $n\geq 1$, $I$ is said to be
\textit{normal}. The edge ideal of a clutter is integrally closed
\cite[p.~153]{monalg-rev}.

In this paper we study the normality of monomial ideals, and the normality
of edge ideals of graphs and clutters and their ideals of covers. A
main problem 
in this area is the characterization of the
normality of the ideal of covers $I_c(G)$ of a graph $G$ in terms of
the combinatorics of $G$. If $G$ is an odd cycle or a perfect graph, 
then $I_c(G)$ is normal, see \cite{Al-Ayyoub-Nasernejad-cover-ideals}
and \cite{perfect}, respectively. 

The contents of this paper are as follows. 
In Section~\ref{section-prelim}, 
we introduce a few results from
polyhedral geometry, commutative algebra, and linear programming. 

As we now recall there are two well known characterizations of the normality of
$I$, one comes from commutative algebra and uses Rees algebras, and the other comes from 
integer programming and uses Hilbert bases.   

If $R$ is an integral domain with field
of fractions $K_R$, recall that the \textit{normalization} or
\textit{integral closure} of $R$ is the
subring $\overline{R}$ consisting of all the elements of $K_R$ that
are integral over $R$. If $R=\overline{R}$ we say that $R$ is {\it
normal\/}. The \textit{Rees algebra} of $I$ is the monomial subring  
$$S[Iz]=K[t_1,\ldots,t_s,t^{v_1}z,\ldots,t^{v_q}z],$$
where 
$z=t_{s+1}$ is a new variable. The Rees algebra of $I$ can be written
as 
\begin{equation}\label{jul30-22-1}
S[Iz]=S\oplus Iz\oplus\cdots
\oplus I^nz^n\oplus\cdots\subset S[z].
\end{equation}
\quad It is well known \cite[p.~168]{Vas1} that the integral 
closure of $S[Iz]$ is given by
\begin{equation}\label{jul30-22-2}
\overline{S[Iz]}=S\oplus \overline{I}z\oplus\cdots
\oplus \overline{I^n}z^n\oplus\cdots\subset S[z].
\end{equation}
\quad Thus, the ring $S[Iz]$ is normal if and only if the ideal $I$
is normal. Normal monomial subrings arise in the theory of toric varieties
\cite{cox-toric-book}. 
A finite set $\mathcal{B}\subset\mathbb{R}^s$
is called a {\it Hilbert
basis\/} if 
$$
\mathbb{Z}^s\cap \mathbb{R}_+{\mathcal B}=\mathbb{N}{\mathcal B},
$$ 
where ${\mathbb R}_+\mathcal{B}$ is the cone generated by
$\mathcal{B}$ and $\mathbb{N}{\mathcal B}$ is the semigroup spanned
by $\mathcal{B}$.  
Following \cite{normali}, we define the \textit{Rees cone} of the
ideal $I$, denoted ${\rm
RC}(I)$, as the rational cone 
\begin{equation}\label{rees-cone-eq}
{\rm RC}(I):=\mathbb{R}_+\mathcal{A}'
\end{equation}
generated by the set
$\mathcal{A}':=\{e_1,\ldots,e_s,(v_1,1),\ldots,(v_q,1)\}$, where $e_i$
is the $i$-th unit vector in $\mathbb{R}^{s+1}$.  
The ideal $I$ is normal if and only if $\mathcal{A}'$ is a
Hilbert basis (Lemma~\ref{normal-hilbert}).

In Section~\ref{normality-criteria} we give normality criteria for
monomial ideals and membership tests to determine whether or not a
given monomial lies in the integral closure of $I^n$ or is a minimal
generator of the integral closure of $I^n$. Let $I_1$ and $I_2$ be ideals of $S$
generated by monomials in disjoint sets of variables, we show that 
$I_1I_2$ is normal if and only if $I_1$ and $I_2$ are normal
(Proposition~\ref{may19-22}).

Given a vector $c=(c_1,\ldots,c_p)$ in $\mathbb{R}^p$, we set
$|c|:=\sum_{i=1}^pc_i$ and denote the integral part of $c$ by $\lfloor
c\rfloor$ and the ceiling of $c$ by $\lceil
c\rceil$. We denote the nonnegative rational numbers by $\mathbb{Q}_+$. 

We come to our first normality criterion. 

\noindent \textbf{Proposition~\ref{normality-criterion-ip}}\textit{\ Let $I$ be a monomial
ideal of $S$ and let $A$ be its incidence matrix. The following conditions are equivalent.
\begin{enumerate}
\item[(a)] $I$ is a normal ideal.
\item[(b)] For each pair of vectors $\alpha\in\mathbb{N}^s$ and
$\lambda\in\mathbb{Q}_+^q$ such that $A\lambda\leq\alpha$, there is 
$m\in\mathbb{N}^q$ satisfying $Am\leq\alpha$ and
$|\lambda|=|m|+\epsilon$ with $0\leq\epsilon<1$.
\end{enumerate} 
}

Given a monomial ideal $I$ and a monomial $t^\alpha$, a 
linear programming membership test for the question
``is $t^\alpha$ a member of $\overline{I}$?'' was shown 
in \cite[Proposition~3.5]{mc8}, \cite[Proposition~1.1]{Ha-Trung-19}. The following proposition gives
a linear algebra membership test that complement these results. 

\noindent \textbf{Proposition~\ref{membership-test-n}\ }(Membership test)\textit{
Let $I$ be a monomial 
ideal of $S$, let $A$ be its incidence matrix, and let $t^\alpha$ be a
monomial in $S$. The following are equivalent.
\begin{enumerate}
\item[(a)] $t^\alpha\in\overline{I^n}$, $n\geq 1$.
\item[(b)] $A\lambda\leq\alpha$ for some $\lambda\in\mathbb{Q}_+^q$
with $|\lambda|\geq n$. 
\item[(c)] $\max\{\langle y, 1\rangle\mid 
y\geq 0;\, Ay\leq\alpha \}=\min\{\langle\alpha,x\rangle\mid x\geq 0;\,
xA\geq
1\}\geq n$.
\end{enumerate}
}

As a byproduct we obtain a minimal generators test for the integral
closure of the powers of a monomial ideal.

\noindent \textbf{Proposition~\ref{mg-test}}\textit{
Let $I$ be a monomial
ideal of $S$ and let $A$ be its incidence matrix. A monomial $t^\alpha\in
S$ is a minimal generator of
$\overline{I^n}$ if and only if the following two conditions hold.
\begin{align*}
&\max\{\langle y, 1\rangle\mid 
y\geq 0;\, Ay\leq\alpha \}=\min\{\langle\alpha,x\rangle\mid x\geq 0;\,
xA\geq
1\}\geq n.\\
&\max\{\langle y, 1\rangle\mid 
y\geq 0;\, Ay\leq\alpha-e_i\}=\min\{\langle\alpha-e_i,x\rangle\mid x\geq 0;\,
xA\geq
1\}<n\\
&\mbox{for each }e_i\mbox{ for which }\alpha-e_i\geq 0.\nonumber 
\end{align*}}
\quad The equality $\overline{I^n}=\overline{(t^{nv_1},\ldots,t^{nv_q})}$
for $n\geq 1$ comes from \cite[Examples~1.4, 3.7]{mc8}. We use the membership
test to give a short proof of this equality (Corollary~\ref{n-th-power}).

The normality of $I$ is also related to integer rounding
properties \cite[Corollary~2.5]{poset}. 
The linear system $x\geq 0; xA\geq{1}$ 
has the {\it integer rounding property\/} if 
\begin{equation}\label{irp-eq1}
{\rm max}\{\langle y,{1}\rangle\mid y\in\mathbb{N}^q;\,   
Ay\leq \alpha\} 
=\lfloor{\rm max}\{\langle y,{1}\rangle\mid y\geq 0;\, 
Ay\leq \alpha\}\rfloor
\end{equation}
for each integral vector $\alpha$ for which the 
right-hand side is finite. The linear system $x\geq 0; xA\leq{1}$ 
has the \textit{integer rounding property} if 
\begin{equation}\label{irp-eq2}
\lceil{\rm min}\{\langle y,{1}\rangle\mid y\geq 0;\, Ay\geq \alpha \}\rceil
={\rm min}\{\langle y,{1}\rangle\mid y\in\mathbb{N}^q;\, Ay\geq \alpha\}
\end{equation}
for each integral vector $\alpha$ for which the left hand side is
finite. Systems with the integer rounding property are well studied;
see \cite{baum-trotter,ainv}, \cite[Chapter~22]{Schr},
\cite[Chapter~5]{Schr2}, 
and references therein. 

As an application we give a short 
proof of the fact that $I$ is normal if and only if 
the system $x\geq 0;xA\geq{1}$ has the integer rounding
property \cite[Corollary~2.5]{poset}
(Corollary~\ref{crit-rounding-normali}). This fact was shown in \cite{poset} using the theory of
blocking and antiblocking polyhedra \cite{baum-trotter},
\cite[p.~82]{Schr2}. 

The following notions of {\it contraction}, {\it deletion}, and {\it
minor\/} come from combinatorial optimization \cite{Schr2}. Given a
clutter $\mathcal{C}$ and a vertex $t_i\in
V(\mathcal{C})$, the {\it
contraction} 
$\mathcal{C}/t_i$ and {\it deletion} $\mathcal{C}\setminus{t_i}$ are the
clutters constructed as follows: both have $V(\mathcal{C})\setminus\{t_i\}$ as
vertex set, $E(\mathcal{C}/t_i)$ is the set of minimal elements 
of $\{e\setminus\{t_i\}\vert\, e\in E(\mathcal{C}\}$, minimal with
respect to inclusion, and 
$E(\mathcal{C}\setminus{t_i})$ is the set 
$\{e\vert\, t_i\notin e\in E(\mathcal{C})\}$. A {\it minor} of $\mathcal{C}$
is a clutter obtained from $\mathcal{C}$ by a sequence of deletions 
and contractions in any order. 

If $\mathcal{C}$ is a clutter and $I(\mathcal{C})$ is a normal ideal, then 
$I(\mathcal{H})$ is a normal ideal for any minor $\mathcal{H}$ of
$\mathcal{C}$ \cite[Proposition~4.3]{normali}. The following result
complements this fact.

\noindent \textbf{Proposition~\ref{may21-22}}\textit{\ Let $\mathcal{C}$ be a clutter and let
$I_c(\mathcal{C})$ be its ideal of covers. If $I_c(\mathcal{C})$ is
normal, then $I_c(\mathcal{H})$ is normal for any minor $\mathcal{H}$ of $\mathcal{C}$.
}

In Section~\ref{normality-degree2} we use Hilbert basis, Ehrhart
rings, and a duality for integer rounding properties, to examine the
normality of ideals generated by monomials of degree $2$, and generalize some results
that were previously known to be valid for edge ideals of graphs. 

In what follows
$I=(t^{v_1},\ldots,t^{v_q})$ is an ideal 
generated by monomials of degree $2$ and $A$ is the incidence matrix
of $I$. We set 
\begin{align}
&\mathcal{B}:=\{e_{s+1}\}
\cup\{e_i+e_{s+1}\}_{i=1}^s\cup\{(v_i,1)\}_{i=1}^q,\label{jul20-22}\\
&\mathcal{Q}:={\rm conv}(0,e_1,\ldots,e_s,v_1,\ldots,v_q),\\
&R:=K[\mathbb{N}\mathcal{B}]=K[z,t_1z,\ldots,t_sz,t^{v_1}z,\ldots,t^{v_q}z],
\mbox{ the \textit{semigroup ring} of }\mathbb{N}\mathcal{B},\\
&{\rm Er}(\mathcal{Q}):=K[t^{\alpha}z^b\mid \alpha\in\mathbb{Z}^s\cap
b\mathcal{Q}]\subset
S[z], \mbox{ the \textit{Ehrhart ring} of }\mathcal{Q}.\label{jul20-22-4}
\end{align}

The following is one of our main results.

\noindent \textbf{Theorem~\ref{normal-b}}\textit{\ Let
$I=(t^{v_1},\ldots,t^{v_q})$ 
be an ideal of $S$ generated by monomials
of degree $2$. Then, $I$ is a normal ideal if and only if
$\mathcal{B}$ is a Hilbert basis.
}

As an application we recover the fact that if $I$ is the edge ideal
of a connected graph, then $I$ is normal if and only if 
$K[\mathbb{N}\mathcal{B}]$ is normal \cite[Theorem~3.3]{roundp}
(Corollary~\ref{jul28-22}).

We characterize when the Ehrhart ring of $\mathcal{Q}$ is the monomial
subring $K[\mathbb{N}\mathcal{B}]$ using the integer rounding
property.

\noindent \textbf{Theorem~\ref{ehrhart-q}}\textit{ 
Let $I$ be an ideal 
of $S$ generated by monomials of degree $2$ and
let $A$ be the incidence matrix of $I$. Then, 
$\overline{K[\mathbb{N}\mathcal{B}]}={\rm Er}(\mathcal{Q})$, and the equality 
$$ 
K[\mathbb{N}\mathcal{B}]={\rm Er}(\mathcal{Q})
$$
holds if and only if the 
system $x\geq 0;\, xA\leq 1$ has the integer
rounding property.
}

In particular, we recover the fact that if $I$ 
is the edge ideal
of a connected graph, then the semigroup ring $K[\mathbb{N}\mathcal{B}]$ is normal if
and only if the system $x\geq 0; xA\leq 1$ has the integer rounding
property \cite[Theorem~3.3]{roundp}.

The \textit{dual} of the edge ideal $I$ of a clutter $\mathcal{C}$, denoted $I^*$, is the
ideal of $S$ 
generated by all monomials $t_1\cdots t_s/t_e$ such that $e$ is an
edge of $\mathcal{C}$. If $A$ and $A^*$ are the incidence matrices of
$I$ and $I^*$, respectively, then the system $x\geq 0;xA\geq{1}$
has the integer rounding property if and 
only if the system $x\geq 0;xA^*\leq{1}$ has the integer
rounding property \cite[Theorem~2.11]{ainv}
(Theorem~\ref{duality-irp}). We will use this duality to characterize
the normality of the dual of the edge ideal of a
graph using Hilbert bases.

If $I$ is the edge ideal of a graph, we prove that $I^*$ is normal if and only if
the set $\mathcal{B}$ is a Hilbert basis (Corollary~\ref{jul4-22}),
and furthermore we prove that $I^*$ is normal if and only if $I$ is
normal (Proposition~\ref{duality-i-i*}). These two results follow from \cite[Theorem~2.12]{ainv} and
\cite[Theorem~3.3]{roundp} when $I$ is the edge ideal of a connected
graph.

In Section~\ref{normality-icdual}, we study the normality of the ideal
of covers $I_c(G)$ of a graph $G$ and give a combinatorial criterion
in terms of Hochster configurations for the normality of $I_c(G)$
when the independence number of $G$ is at most 
two.

Let $v$ be a vertex of a graph $G$. Recall that if $I_c(G)$ is normal, then $I_c(G\setminus v)$ is
normal. The following results shows that the
converse holds under a certain condition. 

\noindent \textbf{Theorem~\ref{may14-22}}\textit{\ Let $v$ be a vertex of a graph $G$. If
the neighbor set $N_G(v)$ of $v$ is a minimal vertex cover of $G$,
then $I_c(G\setminus v)$ is normal if and only if $I_c(G)$ is normal.
}

As a consequence, we recover a result of Al-Ayyoub, Nasernejad and
Roberts showing that the ideal of covers of the
cone over the graph $G$ is normal if the ideal of covers of the
graph $G$ is normal
\cite[Theorem~1.6]{Al-Ayyoub-Nasernejad-cover-ideals}
(Corollary~\ref{aug19-22}).   

Let $G$ be a graph and let $G_1,\ldots,G_r$ be its connected
components. If the edge ideal $I(G)$ is normal, then the edge ideal $I(G_i)$ is normal for
$i=1,\ldots,r$ \cite[Proposition~4.3]{normali} but the converse is not
true (Example \ref{two-triangles-example}). Using Proposition~\ref{may19-22}, we show
that $I_c(G)$ is normal if and only if $I_c(G_i)$ is normal 
for $i=1,\ldots,r$ (Corollary~\ref{may20-22}). 

Hochster gave an example 
of a connected graph whose edge ideal is not normal
\cite[p.~457]{monalg-rev} (cf. \cite[Example~4.9]{ITG}). This example leads to the following concept
\cite[Definition~6.7]{ITG}. 

A \textit{Hochster configuration} of a graph $G$ consists of two odd
cycles $C\sb{1}$, $C\sb{2}$ of $G$ satisfying the following
two conditions:
\begin{enumerate}
\item[(i)] 
$C\sb {1}\cap N_G(C\sb{2})=\emptyset$, where $N_G(C_2)$
is the neighbor set of $C_2$.  
\item[(ii)] No chord of $C\sb {i}$, $i=1,2$, is
an edge of $G$, i.e., $C_i$ is an induced cycle of $G$.
\end{enumerate}

It was conjectured in \cite[Conjecture~6.9]{ITG} that the 
edge ideal of a graph $G$ is normal if and only if the graph has no Hochster configurations. This
conjecture was proved in \cite[Corollary~5.8.10]{graphs}, 
\cite[Corollary~10.5.9]{monalg-rev}. We give
a direct proof of this conjecture using Vasconcelos's description of the integral closure
of the Rees algebra of $I(G)$ \cite[p.~265]{graphs}
(Theorem~\ref{apr16-02}).

The complement of
$G$ is denoted by $\overline{G}$. Using Proposition~\ref{duality-i-i*}
and Theorem~\ref{apr16-02}, we prove that if $\overline{G}$ is 
the disjoint 
union of two odd cycles of
length at least $5$, then $I_c(G)$ is not normal, and if $\overline{G}$ is an odd cycle of
length at least $5$, then $I_c(G)$ is normal
(Corollary~\ref{normality-nc0}). Furthermore, we show that if
$I_c(G)$ is normal, 
then $\overline{G}$ has no
Hochster configuration with induced odd cycles $C_1$, $C_2$ of
length at least $5$ (Corollary~\ref{normality-nc}). 

Let $G$ be a graph with vertex set $V(G)=\{t_1,\ldots,t_s\}$.
A subset $B$ of $V(G)$ is called \textit{independent} or
\textit{stable} if $e\not\subset B$ for any  
$e\in E(G)$. The \textit{independence
number} of $G$, denoted $\beta_0(G)$, is the number 
of vertices in any largest stable set of vertices. 

The following result gives a combinatorial description of 
the normality of the ideal of covers of graphs 
with independence number at most two.

\noindent \textbf{Theorem~\ref{dim2-comb-char}}\ (Duality
criterion)\textit{ Let $G$ be a graph with $\beta_0(G)\leq 2$. The
following hold.
\begin{enumerate}
\item[(a)] $I_c(G)$ is normal if and only if $I(\overline{G})$ is normal. 
\item[(b)] $I_c(G)$ is normal if and only if $\overline{G}$ has no
Hochster configurations.
\end{enumerate}
}
\quad In Section~\ref{examples-section} and Appendix~\ref{Appendix}, we
show some examples and procedures for \textit{Normaliz}
\cite{normaliz2} and \textit{Macaulay}$2$ \cite{mac2}. 
We give an example showing that Theorem~\ref{normal-b} does 
not extend to ideals generated by monomials of degree greater than
$2$ (Example~\ref{example1}). If $\overline{G}$ is a cycle of length
$7$, then the ideals $I(G)$, $I(\overline{G})$, $I_c(G)$, and $I_c(\overline{G})$,
are all normal (Example~\ref{example2}). In
Examples~\ref{example3}--\ref{example5}, we illustrate an auxiliary
result (Lemma~\ref{jul11-22}) and the duality criterion (Theorem~\ref{dim2-comb-char}). 

The graph $G$ in Example~\ref{kaiser-example6} was introduced by Kaiser,
Stehl\'\i k, and  \v{S}krekovski \cite{persistence-ce}. They show 
that the ideal of covers of $G$ satisfies 
$${\rm depth}(S/I_c(G)^3)=0<
4={\rm depth}(S/I_c(G)^4),$$
i.e., the depth function of the cover
ideal of $G$ is not non-increasing, answering a question of Herzog
and Hibi. They also show that ${\rm Ass}(S/I_c(G)^3)$ is not a subset of
${\rm Ass}(S/I_c(G)^4)$, i.e., the ideal of covers $I_c(G)$ of $G$
does not satisfy the
persistence property of associated primes (see
\cite{Fco-Ha-VT,ass-powers} 
and references therein). The graph $G$ is denoted by $H_4$ in
\cite{persistence-ce}. 
Using the normality test of
Procedure~\ref{normality-test-procedure} and {\it Macaulay\/}$2$ \cite{mac2}, we
obtain that $I(G)$ and $I_c(G)$ are not normal whereas $I(\overline{G})$
and $I_c(\overline{G})$ are normal. This example shows that none of
the implications of the duality criterion of 
Theorem~\ref{dim2-comb-char}(a) hold for graphs with independence 
number greater than $2$.

Let $G$ be the graph of Example~\ref{example7}. This graph has
independence number equal to $3$. Using the normality
test of Procedure~\ref{normality-test-procedure} and 
\textit{Macaulay}$2$ \cite{mac2}, we obtain that $I(G)$ is normal, $I_c(G)$ is not 
normal, and furthermore $I_c(G)^5$ is not integrally closed. 
This example shows that the Hochster configurations of
$\overline{G}$, with $C_1$,
$C_2$ induced odd cycles of length at least five, are not the only obstructions to
the normality of $I_c(G)$ (see Corollary~\ref{normality-nc}).  

For unexplained
terminology and additional information,  we refer to 
\cite{mc8,huneke-swanson-book,Vas1,bookthree} for the theory of
integral closure, \cite{Herzog-Hibi-book,monalg-rev} for the theory of
edge ideals and monomial ideals, and \cite{hemmecke,Schr,Schr2} for 
the theory of Hilbert bases and polyhedral geometry.

\section{Preliminaries}\label{section-prelim} 
In this section we introduce a few results from
polyhedral geometry, commutative algebra, and linear programming. We continue to employ 
the notations and definitions used in Section~\ref{section-intro}. 

Given $a\in {\mathbb R}^s\setminus\{0\}$  and 
$c\in {\mathbb R}$, the \textit{closed halfspace} $H^+{(a,c)}$ is defined as
\[
H^+{(a,c)}:=\{x\in{\mathbb R}^s\vert\, 
\langle x,a\rangle\geq c\}.
\]
\quad If $a$ and $c$ are rational, $H^+{(a,c)}$ 
is called a \textit{rational closed halfspace}. A \textit{rational polyhedron}
is a subset of ${\mathbb R}^s$ which is the 
intersection of a finite number of rational closed halfspaces of 
$\mathbb{R}^s$. 

The \textit{cone} generated by a finite subset $\Gamma$ of $\mathbb{R}^s$, 
denoted ${\mathbb R}_+\Gamma$, is the set of all 
linear combinations of $\Gamma$ with coefficients in $\mathbb{R}_+$.
The \textit{finite basis theorem} asserts that a subset $\mathcal{Q}$ of $\mathbb{R}^s$ is a 
rational polyhedron if and only if
$$\mathcal{Q}=\mathbb{R}_+\Gamma+\mathcal{P},$$  
where $\Gamma$ is a finite set of rational points and $\mathcal{P}$
is the convex 
hull ${\rm conv}(\mathcal{A})$ of a
finite set $\mathcal{A}$ of rational
points \cite[Corollary~7.1b]{Schr}. In particular the Newton
polyhedron of a monomial ideal is a rational polyhedron.

Consider a cone $\mathbb{R}_+\Gamma\subset\mathbb{R}^s$ containing no
lines and generated by a finite set $\Gamma$ of rational points. 
A finite set ${\mathcal B}$ is called a 
{\it Hilbert basis\/} of $\mathbb{R}_+\Gamma$ if
$\mathbb{R}_+\Gamma=\mathbb{R}_+{\mathcal B}$ and  
$\mathcal{B}$ is a Hilbert basis. If $\mathcal{B}$ 
is minimal (with respect to inclusion), then 
$\mathcal{B}$ is unique \cite{Schr1}.
The program \textit{Normaliz}
\cite{normaliz2} can be used to compute the Hilbert basis of
$\mathbb{R}_+\Gamma$.  

Let $F=\{t^{w_1},\ldots,t^{w_r}\}$ be a set of monomials of $S$ and
let $\mathcal{F}$ be the family of all subrings $D$ of $S$ such that
$K\cup F\subset D$. The \textit{monomial subring} generated $F$ is
given by
$$
K[F]:=\bigcap_{D\in\mathcal F} D.
$$
\quad The elements of $K[F]$ have the form $\sum
c_a\left(t^{w_1}\right)^{a_1}\cdots\left(t^{w_r}\right)^{a_r}$  
with $c_a\in K$ and all but a finite number of $c_a$'s are zero. Let
$\mathcal{A}$ be the set of vectors
$\{w_1,\ldots,w_r\}\subset\mathbb{N}^s$.   
As a $K$-vector space $K[F]$ is generated by the
set of monomials of the form $t^a$, 
with $a$ in the semigroup $\mathbb{N}{\mathcal A}$ generated by
$\mathcal{A}$. That is, $K[F]=K[\{t^a\mid a\in\mathbb{N}{\mathcal
A}\}]$. This means that $K[F]$ coincides with
$K[\mathbb{N}{\mathcal A}]$, the {\it semigroup ring\/} of the
semigroup $\mathbb{N}{\mathcal A}$ (see \cite{gilmer}).

\begin{theorem}{\rm(\cite[pp.~29--30 ]{Ful1},
\cite[Theorem~9.1.1]{monalg-rev})}\label{int-clos-gen} If
$\Gamma\subset\mathbb{N}^s$ is a finite set of points   
and $\mathcal{S}={\mathbb Z}\Gamma\cap {\mathbb R}_+\Gamma$, then the 
following hold{\rm:}
\begin{align*}
{\rm(a)}\ K[\mathcal{S}]:=K[\{t^a\mid a\in\mathcal{S}\}] \mbox{ is
normal};&\quad {\rm(b)}\ \overline{K[F]}=K[\mathcal{S}], \mbox{ where
}F=\{t^{a}\mid a\in\Gamma\}. 
\end{align*}
\end{theorem}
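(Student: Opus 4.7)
The plan is to leverage the fact that $\mathcal{S}=\mathbb{Z}\Gamma\cap\mathbb{R}_+\Gamma$ is a \emph{saturated} affine semigroup, meaning: whenever $c\in\mathbb{Z}\Gamma$ and $nc\in\mathcal{S}$ for some integer $n\geq 1$, then $c\in\mathcal{S}$. This is immediate from the definitions: $nc\in\mathbb{R}_+\Gamma$ gives $c\in\mathbb{R}_+\Gamma$ by scaling, and combined with $c\in\mathbb{Z}\Gamma$ we obtain $c\in\mathcal{S}$. From this single property both (a) and (b) follow.

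For (a), set $L:=K[\mathbb{Z}\Gamma]$, the group ring of $\mathbb{Z}\Gamma$. Since $\Gamma\subseteq\mathcal{S}$ and $\Gamma$ generates $\mathbb{Z}\Gamma$ as a group, $L$ is the localization of $K[\mathcal{S}]$ at the multiplicative set $\{t^a\mid a\in\mathcal{S}\}$, so $\mathrm{Frac}(K[\mathcal{S}])=\mathrm{Frac}(L)$. Because $\mathbb{Z}\Gamma$ is torsion-free, $L$ is isomorphic to a Laurent polynomial ring over $K$ and is therefore a normal domain; in particular $\overline{K[\mathcal{S}]}\subseteq L$. The $\mathbb{Z}\Gamma$-grading on $L$ restricts to the natural grading on $K[\mathcal{S}]$, and the integral closure of a graded subring inside a graded domain is again graded (a standard torus-action argument). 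It thus suffices to check that every \emph{monomial} $t^a$ with $a\in\mathbb{Z}\Gamma$ that is integral over $K[\mathcal{S}]$ already satisfies $a\in\mathcal{S}$. From an integral equation for $t^a$ one extracts, by comparing $\mathbb{Z}\Gamma$-degrees, a relation $t^{Na}=\sum_{i<N}c_i\,t^{ia}$ in which each nonzero coefficient $c_i$ is a scalar multiple of $t^{(N-i)a}$; picking any index $i<N$ with $c_i\neq 0$ gives $(N-i)a\in\mathcal{S}$ with $N-i\geq 1$, and saturation yields $a\in\mathcal{S}$.

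For (b), the inclusion $K[F]\subseteq K[\mathcal{S}]$ is clear from $\Gamma\subseteq\mathcal{S}$. To show $K[\mathcal{S}]$ is integral over $K[F]$, fix a monomial $t^a$ with $a\in\mathcal{S}$. Since $a\in\mathbb{R}_+\Gamma$ and both $a$ and the elements of $\Gamma$ are rational, a Carath\'eodory-type argument allows one to write $a$ as a \emph{rational} nonnegative combination of elements of $\Gamma$; clearing denominators, $na\in\mathbb{N}\Gamma$ for some positive integer $n$, so $(t^a)^n\in K[F]$ and hence $t^a$ is integral over $K[F]$. Combined with part (a) asserting that $K[\mathcal{S}]$ is normal, this forces $\overline{K[F]}=K[\mathcal{S}]$.

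The main obstacle is the reduction in (a) to checking integrality only on monomial elements. This rests on two setup steps that merit care: identifying $\mathrm{Frac}(K[\mathcal{S}])$ with $\mathrm{Frac}(L)$ so that $\overline{K[\mathcal{S}]}$ lands inside the normal ring $L$, and verifying that integral closures respect the $\mathbb{Z}\Gamma$-grading so that homogeneous components of integral elements remain integral. Once these are in place, saturation of $\mathcal{S}$ dispatches the remaining monomial case in one line, and (b) follows from (a) together with the elementary integrality of each $t^a$ with $a\in\mathcal{S}$ over $K[F]$.
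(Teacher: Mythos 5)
Your proof is correct. The paper does not prove this statement at all---it quotes it from Fulton \cite[pp.~29--30]{Ful1} and Villarreal \cite[Theorem~9.1.1]{monalg-rev}---and your argument (saturation of $\mathcal{S}$, normality of the Laurent ring $K[\mathbb{Z}\Gamma]$, gradedness of the integral closure, degree comparison in an integral equation for a monomial, and clearing denominators for part (b)) is essentially the standard proof found in those references, so there is nothing substantive to compare. Two small points worth making explicit: the torus-action argument for gradedness of the integral closure literally needs $K$ infinite, so either invoke the general $\mathbb{Z}^d$-graded statement (e.g.\ Huneke--Swanson) or base change to an infinite field; and in (b) one should note that $\mathrm{Frac}(K[F])=\mathrm{Frac}(K[\mathcal{S}])$ (both localize to $K[\mathbb{Z}\Gamma]$), so that the two integral closures are computed in the same field and the final identification $\overline{K[F]}=K[\mathcal{S}]$ is legitimate.
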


\begin{lemma}\label{normal-hilbert} Let $I=(t^{v_1},\ldots,t^{v_q})$ be a monomial ideal of
$S$ and let $\mathcal{A}'$ be the subset of $\mathbb{R}^{s+1}$ given
by $\{e_i\}_{i=1}^s\cup\{(v_i,1)\}_{i=1}^q$. Then, $I$ is normal if
and only if $\mathbb{R}_+\mathcal{A}'\cap\mathbb{Z}^{s+1}=\mathbb{N}\mathcal{A}'$. 
\end{lemma}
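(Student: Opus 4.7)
The plan is to identify the Rees algebra with a semigroup ring and then invoke Theorem~\ref{int-clos-gen}. Observe that the generators of $S[Iz]$ are exactly the monomials $t^{e_1},\ldots,t^{e_s}$ and $t^{v_1}z,\ldots,t^{v_q}z$ in the polynomial ring $S[z]=K[t_1,\ldots,t_s,z]$; their exponent vectors in $\mathbb{R}^{s+1}$ are precisely the elements of $\mathcal{A}'$. Hence $S[Iz]=K[\mathbb{N}\mathcal{A}']$.

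Next, apply Theorem~\ref{int-clos-gen}(b) with $\Gamma=\mathcal{A}'$ to obtain
$$\overline{S[Iz]}=K[\mathbb{Z}\mathcal{A}'\cap \mathbb{R}_+\mathcal{A}'].$$
A short auxiliary computation shows $\mathbb{Z}\mathcal{A}'=\mathbb{Z}^{s+1}$: the vectors $e_1,\ldots,e_s$ lie in $\mathcal{A}'$, and for any $i$ the vector $(v_i,1)-\sum_{j=1}^s (v_i)_j\,e_j=e_{s+1}$ lies in $\mathbb{Z}\mathcal{A}'$, so $e_1,\ldots,e_{s+1}\in\mathbb{Z}\mathcal{A}'$.

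Therefore $\overline{S[Iz]}=K[\mathbb{R}_+\mathcal{A}'\cap\mathbb{Z}^{s+1}]$. Comparing this with $S[Iz]=K[\mathbb{N}\mathcal{A}']$, and noting that both rings are spanned as $K$-vector spaces by the monomials indexed by their underlying semigroups, the equality $S[Iz]=\overline{S[Iz]}$ is equivalent to $\mathbb{N}\mathcal{A}'=\mathbb{R}_+\mathcal{A}'\cap\mathbb{Z}^{s+1}$. Finally, as recalled after Eq.~\eqref{jul30-22-2}, the Rees algebra $S[Iz]$ is normal if and only if $I$ is a normal ideal, which yields the desired equivalence.

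The argument is mostly an assembly of cited facts; the only small point that needs verification is the equality $\mathbb{Z}\mathcal{A}'=\mathbb{Z}^{s+1}$, which is handled by the explicit integer combination above. No real obstacle is expected.
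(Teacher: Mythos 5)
Your proposal is correct and follows essentially the same route as the paper: identify $S[Iz]$ with $K[\mathbb{N}\mathcal{A}']$, apply Theorem~\ref{int-clos-gen} together with $\mathbb{Z}\mathcal{A}'=\mathbb{Z}^{s+1}$ to describe $\overline{S[Iz]}$, and conclude via the equivalence of normality of $I$ and of $S[Iz]$ from Eqs.~\eqref{jul30-22-1}--\eqref{jul30-22-2}. The only difference is that you explicitly verify $\mathbb{Z}\mathcal{A}'=\mathbb{Z}^{s+1}$, which the paper simply asserts.
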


\begin{proof} The Rees algebra of $I$ is the monomial subring 
$S[Iz]=K[t_1,\ldots,t_s,t^{v_1}z,\ldots,t^{v_q}z]$, where 
$z=t_{s+1}$ is a new variable. By
Eqs.~\eqref{jul30-22-1}--\eqref{jul30-22-2}, 
the ring $S[Iz]$ is normal if and only if the ideal $I$
is normal. On the other hand the Rees
algebra of $I$ can also be written as
\begin{equation}\label{aug6-22-1}
S[It]=K[\{t^az^b\vert\, (a,b)\in\mathbb{N}{\mathcal
A}'\}]=K[\mathbb{N}\mathcal{A}'],
\end{equation}
and since $\mathbb{Z}\mathcal{A}'=\mathbb{Z}^{s+1}$, by
Theorem~\ref{int-clos-gen}, 
one has
\begin{equation}\label{aug6-22-2}
\overline{S[It]}=K[\{t^az^b\vert\, (a,b)\in
\mathbb{Z}^{s+1}\cap \mathbb{R}_+{\mathcal
A}'\}]=K[\mathbb{Z}^{s+1}\cap\mathbb{R}_+{\mathcal A}']. 
\end{equation}
\quad Hence, by Eqs.~\eqref{aug6-22-1}--\eqref{aug6-22-2}, $S[It]$ is normal if and only if 
$\mathbb{N}{\mathcal A}'=\mathbb{Z}^{s+1}\cap\mathbb{R}_+{\mathcal A}'$.
\end{proof}

\begin{theorem}{\rm(Farkas's Lemma \cite[Theorem~1.1.25]{monalg-rev})}\label{farkas} 
Let $A$ be an $s\times q$ matrix with entries in a field $\mathbb{K}$
and let $\alpha\in \mathbb{K}^s$. 
Assume ${\mathbb Q}\subset\mathbb{K}\subset{\mathbb R}$. Then, either 
there exists $x\in\mathbb{K}^q$ with $Ax=\alpha$ and $x\geq 0$, or
there exists $\beta\in \mathbb{K}^s$ with $\beta A\geq 0$ and $\langle
\beta,\alpha\rangle<0$, but
not both. 
\end{theorem}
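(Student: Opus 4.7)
The plan is to handle the easy ``not both'' clause by direct verification and then to establish the dichotomy using the finite basis theorem together with a separating hyperplane argument. If both alternatives held simultaneously, one would have
\[
0\leq\langle\beta A,x\rangle=\langle\beta,Ax\rangle=\langle\beta,\alpha\rangle<0,
\]
which is absurd, so at most one of the two conclusions occurs.

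For the main implication I first reduce to $\mathbb{K}=\mathbb{R}$. The system $Ax=\alpha$, $x\geq 0$ has coefficients in $\mathbb{K}$, and whenever it admits a real nonnegative solution it admits a basic feasible one, whose coordinates are obtained from entries of $A$ and $\alpha$ by rational operations and therefore lie in $\mathbb{K}$. The same remark, applied afterwards to the rational linear system $\beta A\geq 0$, $\langle\beta,\alpha\rangle<0$, will let me return from $\mathbb{R}$ to $\mathbb{K}$ once $\beta$ has been produced. So suppose there is no real nonnegative solution, and consider the cone
\[
C:=\{Ax\mid x\in\mathbb{R}^q,\,x\geq 0\}=\mathbb{R}_+\{Ae_1,\ldots,Ae_q\}.
\]
By the finite basis theorem recalled in Section~\ref{section-prelim}, $C$ is a rational polyhedron, hence a closed convex subset of $\mathbb{R}^s$. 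Hyperplane separation between $C$ and the exterior point $\alpha$ produces $\beta\in\mathbb{R}^s$ and $c\in\mathbb{R}$ with $\langle\beta,y\rangle\geq c$ for every $y\in C$ and $\langle\beta,\alpha\rangle<c$. Because $C$ is closed under scaling by $\lambda\geq 0$, letting $\lambda\to 0^+$ forces $c\leq 0$ and letting $\lambda\to\infty$ forces $\langle\beta,y\rangle\geq 0$ on $C$. Normalizing to $c=0$ and specializing to $y=Ae_i$ yields $\beta A\geq 0$, while $\langle\beta,\alpha\rangle<0$ is retained.

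The main obstacle is the closedness of the cone $C$: this is not automatic for arbitrary convex cones, and is precisely where the finite basis theorem (equivalently, the Minkowski--Weyl correspondence between finitely generated cones and polyhedral cones) does the real work. Once $C$ is known to be a closed convex set, the separation step is routine and the passage from $\mathbb{R}$ back to $\mathbb{K}$ is purely formal via basic feasible solutions of the rational systems involved.
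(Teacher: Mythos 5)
The paper itself does not prove this statement: it is imported as a preliminary and attributed to \cite[Theorem~1.1.25]{monalg-rev}, so there is no in-paper argument to compare against. Judged on its own, your proof is the standard one — the ``not both'' computation, closedness of the finitely generated cone $C=\{Ax\mid x\geq 0\}$ via Minkowski--Weyl, a separating hyperplane for $\alpha\notin C$, and a field-descent step — and the outline is sound. Two points should be tightened. First, $C$ is generated by the columns of $A$, which lie in $\mathbb{K}^s$ and need not be rational, so you cannot literally invoke the \emph{rational} finite basis theorem as recalled in Section~\ref{section-prelim}; you need the general Minkowski--Weyl statement over $\mathbb{R}$ (or over the ordered field $\mathbb{K}$), which does give that a cone with finitely many real generators is polyhedral and hence closed — you allude to this parenthetically, but the citation as written does not cover it. Second, the descent from $\mathbb{R}$ to $\mathbb{K}$ for the second alternative is glossed: ``basic feasible solution'' applies to the equality-plus-nonnegativity system $Ax=\alpha$, $x\geq 0$ (where the Carath\'eodory-type support reduction plus Cramer's rule indeed produces a solution in $\mathbb{K}^q$), but $\beta A\geq 0$, $\langle\beta,\alpha\rangle<0$ is a system of inequalities with a strict one. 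To make that step honest, normalize to $\langle\beta,\alpha\rangle\leq -1$, observe that the resulting polyhedron is defined by $\mathbb{K}$-data, and take a minimal face (an affine subspace cut out by $\mathbb{K}$-linear equations, hence containing a $\mathbb{K}$-point); alternatively, write the cone $\{\beta\mid \beta A\geq 0\}$ with $\mathbb{K}$-generators via Minkowski--Weyl and note that some generator must already satisfy $\langle\beta,\alpha\rangle<0$. With those repairs the argument is complete.
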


\begin{lemma}\cite[p.~169]{Vas1}\label{icd}
If $I$ is a monomial ideal of $S$ and $n\in\mathbb{N}_+$, then 
\begin{equation*}
\overline{I^n}=(\{t^a\in S\mid (t^a)^{p}\in I^{pn}
\mbox{ for some }p\geq 1\}).
\end{equation*}
\end{lemma}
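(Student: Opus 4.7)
The plan is to prove the equality $\overline{I^n} = J$, where $J$ denotes the ideal on the right-hand side, by establishing the two containments separately. Since $\overline{I^n}$ is a monomial ideal (integral closures of monomial ideals are monomial) and $J$ is monomial by construction, it suffices to compare their monomial generators.

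The inclusion $J \subseteq \overline{I^n}$ is almost immediate from the definition of integral closure of an ideal. If $(t^a)^p \in I^{pn} = (I^n)^p$ for some $p \geq 1$, then $t^a$ satisfies the monic relation $x^p - (t^a)^p = 0$ whose constant coefficient lies in $(I^n)^p$; hence $t^a$ is integral over $I^n$ and therefore belongs to $\overline{I^n}$.

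For the reverse inclusion $\overline{I^n} \subseteq J$, the plan is to invoke the Newton polyhedron description given in equation~\eqref{jun21-21}. A monomial $t^a$ lies in $\overline{I^n}$ iff $a/n \in {\rm NP}(I) = \mathbb{R}_+^s + {\rm conv}(v_1,\ldots,v_q)$, so I can write
$$a/n = \sum_{i=1}^q \lambda_i v_i + w, \qquad \lambda_i \geq 0,\ \ \sum_{i=1}^q \lambda_i = 1,\ \ w \in \mathbb{R}_+^s.$$
The crucial step is to arrange that the $\lambda_i$ and the entries of $w$ are rational. This is possible because both $a/n$ and the generators $v_i$ are rational and ${\rm NP}(I)$ is a rational polyhedron, so the linear system encoding the decomposition has rational data and any real nonnegative solution can be replaced by a rational nonnegative one (alternatively one can derive this via Farkas's lemma, Theorem~\ref{farkas}). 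Choosing $p \in \mathbb{N}_+$ to clear denominators so that $pn\lambda_i \in \mathbb{N}$ for every $i$ and $pnw \in \mathbb{N}^s$, we obtain $pa = \sum_{i=1}^q (pn\lambda_i)v_i + pnw$ with $\sum_{i=1}^q pn\lambda_i = pn$, and hence
$$(t^a)^p \ =\ t^{pa} \ =\ \prod_{i=1}^q (t^{v_i})^{pn\lambda_i}\cdot t^{pnw}.$$
The first factor is a product of $pn$ of the generators of $I$ (counted with multiplicity), so it lies in $I^{pn}$, and the second factor is a monomial of $S$; thus $(t^a)^p \in I^{pn}$, showing $t^a \in J$.

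The main technical obstacle is the rationality of the convex decomposition, but this is a standard consequence of the rationality of ${\rm NP}(I)$, or equivalently of the finite basis theorem recalled in Section~\ref{section-prelim}. Everything else is routine manipulation of monomials and products of ideals.
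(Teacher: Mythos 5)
Your proposal is correct and follows essentially the same route as the paper, whose one-line proof likewise rests on the Newton polyhedron description in Eq.~\eqref{jun21-21} together with Farkas's lemma (Theorem~\ref{farkas}) to obtain a rational representation; you simply fill in the details, using the integral-dependence definition for the easy containment and the polyhedral description with denominator clearing for the reverse one.
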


\begin{proof} This follows using Eq.~\eqref{jun21-21} and Farkas's
lemma (Theorem~\ref{farkas}). 
\end{proof}

The following duality is valid for incidence matrices of clutters.

\begin{theorem}\cite[Theorem~2.11]{ainv}\label{duality-irp} Let $A=(a_{i,j})$ be a
$\{0,1\}$-matrix and let $A^*=(a_{i,j}^*)$ be the
matrix whose $(i,j)$-entry is $a_{i,j}^*=1-a_{i,j}$. Then, the system 
$x\geq 0;xA\geq{1}$
has the integer rounding property if and 
only if the system $x\geq 0;xA^*\leq{1}$ has the integer
rounding property.
\end{theorem}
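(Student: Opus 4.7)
My plan is to exploit the algebraic identity $A^*=J-A$ (with $J$ the all-ones $s\times q$ matrix) together with LP duality to recast both integer rounding properties in a common form involving only the packing function of $A$. For a column vector $y\in\mathbb{R}^q$ one has $A^*y=|y|\mathbf{1}_s-Ay$, with $|y|=\sum_j y_j$, which turns the constraint $A^*y\geq\alpha$ into $Ay\leq|y|\mathbf{1}_s-\alpha$.

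Write $\nu^*(\beta):=\max\{|y|\mid y\geq 0,\ Ay\leq\beta\}$ and $\nu(\beta):=\max\{|y|\mid y\in\mathbb{N}^q,\ Ay\leq\beta\}$. The system $xA\geq\mathbf{1}$ has the integer rounding property iff $\nu(\beta)=\lfloor\nu^*(\beta)\rfloor$ for every $\beta\in\mathbb{N}^s$ with $\nu^*(\beta)$ finite. Parametrizing by $k:=|y|$, the identity above together with LP duality yield
\begin{align*}
\tilde\mu^*(\alpha)&:=\min\{|y|\mid y\geq 0,\,A^*y\geq\alpha\}=\min\{k\geq\max_i\alpha_i\mid\nu^*(k\mathbf{1}_s-\alpha)\geq k\},\\
\tilde\mu(\alpha)&:=\min\{|y|\mid y\in\mathbb{N}^q,\,A^*y\geq\alpha\}=\min\{k\in\mathbb{N}\mid\nu(k\mathbf{1}_s-\alpha)\geq k\},
\end{align*}
and the system $xA^*\leq\mathbf{1}$ has the integer rounding property iff $\tilde\mu(\alpha)=\lceil\tilde\mu^*(\alpha)\rceil$ for every relevant $\alpha\in\mathbb{N}^s$.

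The key auxiliary observation is that $k\mapsto\nu^*(k\mathbf{1}_s-\alpha)-k$ is nondecreasing. I would derive this from the bound $\nu^*(\mathbf{1}_s)\geq 1$: any $x\geq 0$ with $xA\geq\mathbf{1}_q$ satisfies $|x|\geq\sum_{i\in e}x_i\geq 1$ for any edge $e$ of the underlying clutter, so by LP duality $\nu^*(\mathbf{1}_s)\geq 1$; hence there exists $y^0\geq 0$ with $Ay^0\leq\mathbf{1}_s$ and $|y^0|\geq 1$. Shifting any optimum for $\nu^*(k_1\mathbf{1}_s-\alpha)$ by $(k_2-k_1)y^0$ produces a feasible point for $\nu^*(k_2\mathbf{1}_s-\alpha)$ of total weight $\nu^*(k_1\mathbf{1}_s-\alpha)+(k_2-k_1)$, proving the claim. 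In particular, the set $\{k\mid\nu^*(k\mathbf{1}_s-\alpha)\geq k\}$ is upward-closed, and therefore $\nu^*(k_0\mathbf{1}_s-\alpha)\geq k_0$ whenever $k_0:=\lceil\tilde\mu^*(\alpha)\rceil$.

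With the monotonicity in hand the $(\Rightarrow)$ direction is immediate: applying the rounding property for $A$ at the integer vector $k_0\mathbf{1}_s-\alpha\in\mathbb{N}^s$ gives $\nu(k_0\mathbf{1}_s-\alpha)\geq\lfloor\nu^*(k_0\mathbf{1}_s-\alpha)\rfloor\geq k_0$, whence $\tilde\mu(\alpha)\leq k_0=\lceil\tilde\mu^*(\alpha)\rceil$; the reverse inequality is trivial. The converse is symmetric: given $\beta\in\mathbb{N}^s$ I would pick an integer $k\geq\max(\nu^*(\beta),\max_i\beta_i)$, set $\alpha:=k\mathbf{1}_s-\beta\in\mathbb{N}^s$, apply the $A^*$-rounding property, and translate the resulting integer $y$ back via $Ay=|y|\mathbf{1}_s-A^*y$ to extract an integer packing of $\beta$ of size $\lfloor\nu^*(\beta)\rfloor$. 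The main obstacle is precisely the monotonicity lemma together with this parameter-matching in the converse direction: everything else reduces to bookkeeping once the $k=|y|$ parametrization and the lower bound $\nu^*(\mathbf{1}_s)\geq 1$ are available.
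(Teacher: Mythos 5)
The paper itself gives no proof of this statement (it is quoted from \cite[Theorem~2.11]{ainv}), so your argument has to stand on its own. Its machinery is sound: the complementation identity $A^*y=|y|\mathbf{1}_s-Ay$, the padding vector $y^0$ (indeed any unit vector $e_j$ works, since the columns of $A$ are $\{0,1\}$-vectors), the monotonicity of $k\mapsto\nu^*(k\mathbf{1}_s-\alpha)-k$, and the whole forward implication are correct, up to routine remarks: one may assume $\alpha\geq 0$ because rows with $\alpha_i\leq 0$ impose vacuous constraints on $A^*y\geq\alpha$; $k_0=\lceil\tilde\mu^*(\alpha)\rceil\geq\max_i\alpha_i$ since $(A^*y)_i\leq|y|$; the integral packing of $k_0\mathbf{1}_s-\alpha$ of size $\geq k_0$ must be trimmed (coordinatewise, using $A\geq0$) to size exactly $k_0$ before complementing; and $\nu^*(k_0\mathbf{1}_s-\alpha)$ is finite only when $A$ has no zero column, a degenerate case to be disposed of separately (there both rounding properties hold trivially).

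The genuine gap is exactly the spot you flag as the ``parameter-matching in the converse direction,'' and as stated it fails. With $k\geq\max(\nu^*(\beta),\max_i\beta_i)$ and $\alpha=k\mathbf{1}_s-\beta$, your own parametrized equivalence says $\tilde\mu^*(\alpha)\leq k$ if and only if $\nu^*(\beta)\geq k$; so unless $\nu^*(\beta)$ is an integer equal to $k$ (impossible when $\nu^*(\beta)\notin\mathbb{Z}$, and excluded when $\max_i\beta_i>\nu^*(\beta)$), one has $\tilde\mu^*(\alpha)>k$. The integral cover $y$ produced by the $A^*$-rounding property then has $|y|=\lceil\tilde\mu^*(\alpha)\rceil>k$, and translating back gives $Ay\leq(|y|-k)\mathbf{1}_s+\beta$: an integral packing of the \emph{enlarged} vector $\beta+(|y|-k)\mathbf{1}_s$, not of $\beta$, and there is no justified way to prune it to a packing of $\beta$ of size $\lfloor\nu^*(\beta)\rfloor$ (deleting a column lowers the total by one but lowers each row load by at most one, so the counting does not close). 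The repair is to choose the parameter on the other side: set $N=\lfloor\nu^*(\beta)\rfloor$ and $\alpha=N\mathbf{1}_s-\beta$; negative entries of $\alpha$ are harmless, since those covering constraints are vacuous (and the definition of the rounding property allows arbitrary integral $\alpha$). Scaling a fractional packing of $\beta$ of value at least $N$ by $N/|y|$ shows $\tilde\mu^*(\alpha)\leq N$, the $A^*$-rounding property yields an integral cover of size at most $N$, padding with a unit vector brings it to size exactly $N$, and complementing gives an integral $y'$ with $Ay'\leq\beta$ and $|y'|=N$, i.e.\ $\nu(\beta)\geq\lfloor\nu^*(\beta)\rfloor$. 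With this change (and the zero-column case noted above) your argument becomes a complete, elementary LP proof of the cited theorem.
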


\section{Normality criteria for monomial
ideals}\label{normality-criteria}
In this section we give normality criteria for
monomial ideals and membership tests for the integral closure of their
powers. To avoid repetitions, we continue to employ 
the notations and definitions used in Sections~\ref{section-intro} 
and \ref{section-prelim}.

\begin{proposition}\label{may19-22} Let $I_1$ and $I_2$ be ideals of $S$
generated by monomials in disjoint sets of variables. The following
hold.
\begin{enumerate}
\item[(a)] $I_1I_2=I_1\cap I_2$.\quad {\rm (b)} 
$\overline{I_1}\ \overline{I_2}\subset \overline{I_1I_2}$.  
\item[(c)] $\overline{(I_1I_2)^n}=\overline{I_1^n}\ \overline{I_2^n}$
for all $n\geq 1$.
\item[(d)] $I_1I_2$ is normal if and only if $I_1$ and $I_2$ are
normal.
\end{enumerate}
\end{proposition}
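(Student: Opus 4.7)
The plan is to establish (a)--(d) in the stated order, with the recurring device being the observation that two monomials supported in disjoint sets of variables have least common multiple equal to their product. For (a), the inclusion $I_1I_2\subset I_1\cap I_2$ is immediate. Conversely, a monomial $t^\gamma\in I_1\cap I_2$ is divisible by some $t^{\gamma_1}\in\mathcal{G}(I_1)$ and some $t^{\gamma_2}\in\mathcal{G}(I_2)$; since $t^{\gamma_1}$ and $t^{\gamma_2}$ involve disjoint variables, their product still divides $t^\gamma$, so $t^\gamma\in I_1I_2$. For (b), given monomials $t^\alpha\in\overline{I_1}$ and $t^\beta\in\overline{I_2}$, Lemma~\ref{icd} supplies integers $p,q\geq 1$ with $t^{p\alpha}\in I_1^p$ and $t^{q\beta}\in I_2^q$, and taking $N=pq$ gives $(t^{\alpha+\beta})^N\in I_1^N I_2^N=(I_1I_2)^N$; Lemma~\ref{icd} again yields $t^{\alpha+\beta}\in\overline{I_1I_2}$.

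For (c), the inclusion $\overline{I_1^n}\,\overline{I_2^n}\subset\overline{(I_1I_2)^n}$ is just (b) applied to the ideals $I_1^n$ and $I_2^n$, which are again supported on disjoint variables. The reverse direction is the crux. Let $t^\gamma\in\overline{(I_1I_2)^n}$ and use Lemma~\ref{icd} to obtain $p\geq 1$ with $t^{p\gamma}\in(I_1I_2)^{pn}=I_1^{pn}I_2^{pn}$. Partition the variables of $S$ as $X\sqcup Y\sqcup Z$, where $X$ (resp.\ $Y$) is the support of $I_1$ (resp.\ $I_2$), and split $\gamma=\gamma_X+\gamma_Y+\gamma_Z$ accordingly. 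Every minimal generator of $I_1^{pn}I_2^{pn}$ factors as $t^{\mu_1}t^{\mu_2}$ with $\mu_1$ supported on $X$ and $\mu_2$ on $Y$, so divisibility of some such generator into $t^{p\gamma}$ forces, componentwise on $X$ and $Y$, that $t^{p\gamma_X}\in I_1^{pn}$ and $t^{p\gamma_Y}\in I_2^{pn}$. Lemma~\ref{icd} then yields $t^{\gamma_X}\in\overline{I_1^n}$ and $t^{\gamma_Y}\in\overline{I_2^n}$, and multiplying by $t^{\gamma_Z}\in S$ places $t^\gamma=t^{\gamma_X}t^{\gamma_Y}t^{\gamma_Z}$ in the ideal $\overline{I_1^n}\,\overline{I_2^n}$.

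Part (d) then drops out of (c). If both $I_1$ and $I_2$ are normal, then $\overline{(I_1I_2)^n}=\overline{I_1^n}\,\overline{I_2^n}=I_1^nI_2^n=(I_1I_2)^n$ for every $n\geq 1$, so $I_1I_2$ is normal. Conversely, assume $I_1I_2$ is normal, fix $t^\alpha\in\overline{I_1^n}$, and pick any $t^\beta\in\mathcal{G}(I_2^n)$, which is supported on $Y$. Then (c) and the normality of $I_1I_2$ give $t^\alpha t^\beta\in\overline{(I_1I_2)^n}=(I_1I_2)^n=I_1^n I_2^n$, and a support comparison between $t^\alpha t^\beta$ and a minimal generator $t^{\mu_1}t^{\mu_2}$ of $I_1^n I_2^n$ dividing it forces $\mu_1\leq\alpha$ componentwise, hence $t^\alpha\in I_1^n$; symmetry handles $I_2$. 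The main obstacle throughout is bookkeeping rather than depth: the ambient ring $S$ may carry variables outside both $X$ and $Y$, so I must take care that the support decomposition in (c) correctly isolates the $X$ and $Y$ blocks from the irrelevant $Z$ block before invoking Lemma~\ref{icd}.
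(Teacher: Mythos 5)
Your argument is correct, and parts (a), (b), and (d) run essentially parallel to the paper's proof (in (d) you fix a generator of $I_2^n$ rather than of $\overline{I_2^n}$, which is an immaterial variant). The genuine divergence is in part (c): the paper reduces to the case $n=1$ and obtains the reverse inclusion from the sandwich $\overline{I_1I_2}\subset\overline{I_1}\cap\overline{I_2}=\overline{I_1}\,\overline{I_2}\subset\overline{I_1I_2}$, i.e.\ it reapplies part (a) to the integrally closed ideals $\overline{I_1}$ and $\overline{I_2}$ (which tacitly uses that these are again monomial ideals minimally generated in the two disjoint variable sets) together with part (b). You instead argue directly for every $n$: from $t^{p\gamma}\in I_1^{pn}I_2^{pn}$ you split the exponent vector into its $X$-, $Y$-, and $Z$-blocks, read off $t^{p\gamma_X}\in I_1^{pn}$ and $t^{p\gamma_Y}\in I_2^{pn}$ by restricting the divisibility inequality to the relevant coordinates, and then invoke Lemma~\ref{icd} on each factor. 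This route bypasses the intersection identity $\overline{I_1}\cap\overline{I_2}=\overline{I_1}\,\overline{I_2}$ altogether and makes explicit the support bookkeeping (including the block of variables appearing in neither ideal) that the paper leaves implicit; the paper's version is shorter and displays (c) as a formal consequence of (a) and (b), while yours is more self-contained at the level of exponent vectors. Both are valid proofs.
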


\begin{proof} (a) Clearly $I_1I_2\subset I_1\cap I_2$. To show the
reverse inclusion take $t^a\in I_1\cap I_2$. Then, we can write $t^a=t^bt^\gamma$ and
$t^a=t^ct^\delta$ with $t^b\in\mathcal{G}(I_1)$ and
$t^c\in\mathcal{G}(I_2)$. Since the monomials in $\mathcal{G}(I_1)$ 
do not have any common variable with the monomials in 
$\mathcal{G}(I_2)$, from the equality $t^bt^\gamma=t^ct^\delta$ it follows that $t^a\in I_1I_2$.

(b) Take $t^at^b\in\overline{I_1}\ \overline{I_2}$ with 
$t^a\in\mathcal{G}(\overline{I_1})$ and
$t^b\in\mathcal{G}(\overline{I_2})$. Then, by Lemma~\ref{icd}, we get
that 
$(t^a)^{k}\in I_1^k$ and $(t^b)^{\ell}\in I_2^\ell$ for 
some positive integers $k,\ell$. Thus
$$(t^at^b)^{k\ell}=(t^a)^{k\ell}(t^b)^{\ell k}\in
I_1^{k\ell}I_2^{\ell k}=(I_1I_2)^{k\ell},
$$
and consequently $(t^at^b)^{k\ell}\in(I_1I_2)^{k\ell}$. Hence 
$t^at^b\in\overline{I_1I_2}$.

(c) It suffices to show the case $n=1$ because 
$(I_1I_2)^n=I_1^nI_2^n$ for all $n\geq 1$. By parts (a) and (b), 
one has  $\overline{I_1I_2}\subset\overline{I_1}\cap \overline{I_2}=
\overline{I_1}\ \overline{I_2}\subset \overline{I_1I_2}$, and equality
holds everywhere. 

(d) $\Rightarrow)$ Assume that $I_1I_2$ is normal. To show that $I_1$
is normal we need only show the inclusion $\overline{I_1^n}\subset
I_1^n$ for all $n\geq 1$. Take $t^a\in\mathcal{G}(\overline{I_1^n})$ and fix
$t^b\in\mathcal{G}(\overline{I_2^n})$. From (c), one has
$$  
(I_1I_2)^n=\overline{(I_1I_2)^n}=\overline{I_1^n}\ \overline{I_2^n},
$$
and consequently $t^at^b\in(I_1I_2)^n$. Thus, we can write 
\begin{equation}\label{aug6-22-3}
t^at^b=(t^{c_1}t^{d_1})\cdots(t^{c_n}t^{d_n})t^\epsilon,
\end{equation}
with $t^{c_i}\in\mathcal{G}(I_1)$ and $t^{d_i}\in\mathcal{G}(I_2)$ for
$i=1,\ldots,n$. Since the monomials in $\mathcal{G}(I_1)$ 
do not have any common variable with the monomials in 
$\mathcal{G}(\overline{I_2^n})$, from Eq.~\eqref{aug6-22-3} it follows that $t^a$ is a multiple
of $t^{c_1}\cdots t^{c_n}$, and $t^a\in I_1^n$. By a similar
argument we obtain that $I_2$ is normal.

$\Leftarrow)$ Assume that $I_1$ and $I_2$ are normal. Then, 
by part (c), $I_1I_2$ is normal.
\end{proof}

\begin{proposition}\label{normality-criterion-ip} Let $I$ be a monomial
ideal of $S$ and let $A$ be its incidence matrix. The following conditions are equivalent.
\begin{enumerate}
\item[(a)] $I$ is a normal ideal.
\item[(b)] For each pair of vectors $\alpha\in\mathbb{N}^s$ and
$\lambda\in\mathbb{Q}_+^q$ such that $A\lambda\leq\alpha$, there is 
$m\in\mathbb{N}^q$ satisfying $Am\leq\alpha$ and
$|\lambda|=|m|+\epsilon$ with $0\leq\epsilon<1$.
\end{enumerate} 
\end{proposition}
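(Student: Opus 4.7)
The plan is to translate both (a) and (b) into a common combinatorial language on exponent vectors, using the description of $\overline{I^n}$ supplied by Lemma~\ref{icd}. The key auxiliary facts, for $\alpha\in\mathbb{N}^s$ and an integer $n\geq 1$, are: first, $t^\alpha\in I^n$ if and only if there exists $m\in\mathbb{N}^q$ with $Am\leq\alpha$ and $|m|=n$ (equivalently $|m|\geq n$, after trimming components of $m$); and second, $t^\alpha\in\overline{I^n}$ if and only if there exists $\lambda\in\mathbb{Q}_+^q$ with $A\lambda\leq\alpha$ and $|\lambda|\geq n$.

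The first equivalence is immediate from the definition of $I^n$, by reading off the multiplicities of the generators of $I$ occurring in a factorization of $t^\alpha$. For the second I use Lemma~\ref{icd}: $t^\alpha\in\overline{I^n}$ iff $(t^\alpha)^p\in I^{pn}$ for some $p\geq 1$; applying the first fact to $(t^\alpha)^p$ produces $m\in\mathbb{N}^q$ with $Am\leq p\alpha$ and $|m|\geq pn$, and one sets $\lambda:=m/p\in\mathbb{Q}_+^q$. Conversely, given a rational $\lambda$ with $A\lambda\leq\alpha$ and $|\lambda|\geq n$, clear denominators to obtain $m:=p\lambda\in\mathbb{N}^q$ for suitable $p$, and reverse the argument to conclude $(t^\alpha)^p\in I^{pn}$. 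This passage between rational $\lambda$ and integral $m$ is the only mildly delicate step in the whole argument, and it is precisely what Lemma~\ref{icd} is designed for.

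With these facts in hand, the two implications of the proposition are short. For (a)$\Rightarrow$(b), given $\alpha\in\mathbb{N}^s$ and $\lambda\in\mathbb{Q}_+^q$ with $A\lambda\leq\alpha$, set $n:=\lfloor|\lambda|\rfloor$: if $n=0$ take $m=0$, and otherwise $|\lambda|\geq n\geq 1$, so the second fact places $t^\alpha\in\overline{I^n}=I^n$ (using normality), and the first fact supplies $m\in\mathbb{N}^q$ with $Am\leq\alpha$ and $|m|=n$. The decomposition $|\lambda|=|m|+\epsilon$ with $\epsilon=|\lambda|-\lfloor|\lambda|\rfloor\in[0,1)$ then follows. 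For (b)$\Rightarrow$(a), take $t^\alpha\in\overline{I^n}$, produce $\lambda$ via the second fact, apply (b) to obtain $m\in\mathbb{N}^q$ with $Am\leq\alpha$ and $|m|=\lfloor|\lambda|\rfloor\geq n$, and conclude $t^\alpha\in I^{|m|}\subseteq I^n$ by the first fact.
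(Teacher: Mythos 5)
Your proof is correct and follows essentially the same route as the paper: both arguments hinge on Lemma~\ref{icd} to pass between rational certificates $\lambda$ (clearing denominators) and integral factorizations of $t^\alpha$ read off as vectors $m$ with $Am\leq\alpha$. The only difference is organizational: your ``second fact'' is exactly the equivalence (a)$\Leftrightarrow$(b) of the paper's Proposition~\ref{membership-test-n}, which the paper proves separately by the same computation and instead inlines here.
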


\begin{proof} (a)$\Rightarrow$(b): Pick
$r\in\mathbb{N}_+=\mathbb{N}\setminus\{0\}$ such that
$A(r\lambda)\leq r\alpha$ and $r\lambda\in\mathbb{N}^q$. Let
$\lambda_1,\ldots,\lambda_q$ be the entries of $\lambda$. Then,
regarding the $v_i$'s as column vectors, one has
$$
A(r\lambda)=(r\lambda_1)v_1+\cdots+(r\lambda_q)v_q\leq
r\alpha\quad\therefore\quad(t^{\alpha})^r\in I^{r|\lambda|}\subset
I^{r\lfloor|\lambda|\rfloor}.
$$
\quad Thus, by Lemma~\ref{icd}, 
$t^{\alpha}\in\overline{I^{\lfloor|\lambda|\rfloor}}=I^{\lfloor|\lambda|\rfloor}$
because $I$ is a normal ideal. Then, we can write
$t^\alpha=(t^{v_1})^{m_1}\cdots(t^{v_q})^{m_q}t^\delta$, with
$m_i\in\mathbb{N}$ for all $i$,
$\sum_{i=1}^q{m_i}=\lfloor|\lambda|\rfloor$, and
$\delta\in\mathbb{N}^s$. If $m$ is the vector with entries
$m_1,\ldots,m_q$, then $Am\leq\alpha$ and
$|\lambda|=\lfloor|\lambda|\rfloor+\epsilon=|m|+\epsilon$ with $0\leq\epsilon<1$.

(b)$\Rightarrow$(a): To show that $I$ is a normal ideal take 
$t^\alpha\in\overline{I^n}$. Then, by
Lemma~\ref{icd}, 
$(t^\alpha)^r\in I^{rn}$ for some $r\in\mathbb{N}_+$ and we can write
$$
t^{r\alpha}=(t^{v_1})^{n_1}\cdots(t^{v_q})^{n_q}t^\delta,
$$
with $n_i\in\mathbb{N}$ for all $i$,
$\sum_{i=1}^q{n_i}=nr$, and
$\delta\in\mathbb{N}^s$. Setting $\lambda=(n_1,\ldots,n_q)/r$,
one has 
$$
\alpha=\bigg(\sum_{i=1}^q(n_i/r)v_i)\bigg)+(\delta/r)=A\lambda+(\delta/r)\geq A\lambda.
$$
\quad Hence, by hypothesis, there is 
$m\in\mathbb{N}^q$ satisfying $Am\leq\alpha$ and
$|\lambda|=|m|+\epsilon$ with $0\leq\epsilon<1$. Note that
$|\lambda|=n$, and consequently $\epsilon=0$ because $n-|m|=\epsilon$
is an integer. Thus, $n=|\lambda|=|m|$ and from the
inequality $Am\leq\alpha$ it follows readily that $t^\alpha\in I^n$. 
\end{proof}

\begin{proposition}{\rm(Membership test)}\label{membership-test-n}
Let $I=(t^{v_1},\ldots,t^{v_q})$ be a monomial
ideal of $S$, let $A$ be its incidence matrix, and let $t^\alpha$ be a
monomial in $S$. The following are equivalent.
\begin{enumerate}
\item[(a)] $t^\alpha\in\overline{I^n}$, $n\geq 1$.
\item[(b)] $A\lambda\leq\alpha$ for some $\lambda\in\mathbb{Q}_+^q$
with $|\lambda|\geq n$. 
\item[(c)] $\max\{\langle y, 1\rangle\mid 
y\geq 0;\, Ay\leq\alpha \}=\min\{\langle\alpha,x\rangle\mid x\geq 0;\,
xA\geq
1\}\geq n$.
\end{enumerate}
\end{proposition}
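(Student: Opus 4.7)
The plan is to establish the chain (a) $\Leftrightarrow$ (b) $\Leftrightarrow$ (c). The first equivalence is combinatorial and rests on Lemma~\ref{icd} together with clearing of denominators, while the second is a direct application of strong LP duality, for which the max--min equality in (c) is the content. I would treat these two equivalences independently since they use disjoint tools.

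For (a) $\Rightarrow$ (b), I would apply Lemma~\ref{icd} to produce $p\in\mathbb{N}_+$ with $(t^\alpha)^p\in I^{pn}$, write $p\alpha=n_1v_1+\cdots+n_qv_q+p\delta$ with $n_i\in\mathbb{N}$, $\delta\in\mathbb{N}^s$, and $\sum_i n_i\geq pn$ (after possibly absorbing excess generators into the $\delta$-part we may assume $\sum_i n_i=pn$), and then set $\lambda:=(n_1,\ldots,n_q)/p\in\mathbb{Q}_+^q$. Then $A\lambda\leq\alpha$ and $|\lambda|=n$. Conversely, for (b) $\Rightarrow$ (a), given $\lambda\in\mathbb{Q}_+^q$ with $A\lambda\leq\alpha$ and $|\lambda|\geq n$, choose $p\in\mathbb{N}_+$ clearing all denominators of $\lambda$. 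Since $A$ is an integer matrix and $p\alpha$ is integral, the vector $p\alpha-A(p\lambda)$ lies in $\mathbb{N}^s$, so
\[
(t^\alpha)^p=(t^{v_1})^{p\lambda_1}\cdots(t^{v_q})^{p\lambda_q}\,t^{p\alpha-A(p\lambda)}\in I^{p|\lambda|}\subset I^{pn},
\]
and Lemma~\ref{icd} yields $t^\alpha\in\overline{I^n}$.

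For the equivalence (b) $\Leftrightarrow$ (c), the max and min appearing in (c) are a primal--dual pair of linear programs,
\[
\max\{\langle y,1\rangle\mid y\geq 0,\ Ay\leq\alpha\}\quad\text{and}\quad \min\{\langle\alpha,x\rangle\mid x\geq 0,\ xA\geq 1\}.
\]
The primal is always feasible (take $y=0$), and since every column $v_j$ of $A$ is a nonzero vector in $\mathbb{N}^s$ (as $t^{v_j}$ is a minimal generator of a proper monomial ideal), the dual is also feasible. Moreover the primal is bounded because any $v_j$ has a positive entry. Strong LP duality, which follows from Theorem~\ref{farkas}, then gives the claimed equality of the two optima. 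Condition (b) says that there is a feasible point $\lambda$ for the primal with objective value $\geq n$, which is equivalent to the primal optimum being $\geq n$; since the feasible region is a rational polyhedron, its optima are attained at rational vertices, so the existence of a real optimizer with value $\geq n$ already gives a rational $\lambda$. This yields (b) $\Leftrightarrow$ (c).

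The only genuine technical point is the bookkeeping that turns ``$(t^\alpha)^p\in I^{pn}$'' into a rational vector $\lambda$ with $|\lambda|\geq n$ and back; the passage through LP duality is routine once the primal and dual are correctly identified, as the paper has already set up Farkas's lemma for this purpose. No additional case analysis is needed, since the degenerate situation $I=S$ makes both (a) and (c) trivially true.
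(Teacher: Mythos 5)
Your proposal is correct and takes essentially the same route as the paper: Lemma~\ref{icd} plus clearing denominators handles the passage between (a) and (b), and the max--min equality in (c) is strong LP duality after checking feasibility of both programs and finiteness of the maximum, with rationality of an optimal solution giving the rational $\lambda$; you merely organize the implications as (a)$\Leftrightarrow$(b) and (b)$\Leftrightarrow$(c) instead of the paper's cycle (a)$\Rightarrow$(b)$\Rightarrow$(c)$\Rightarrow$(a). (One cosmetic slip: in (a)$\Rightarrow$(b) the remainder $p\alpha-\sum_i n_iv_i$ need not have the form $p\delta$ with $\delta\in\mathbb{N}^s$; it is just some vector in $\mathbb{N}^s$, and nonnegativity is all the argument uses.)
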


\begin{proof} The sets $A_1=\{y\mid y\geq 0;\, Ay\leq\alpha\}$ and
$A_2=\{x\mid x\geq 0;\,xA\geq 1\}$ are not empty because $0\in A_1$
and, since $v_i\in\mathbb{N}^s\setminus\{0\}$ for all $i$, one 
can choose the entries of $x$ large enough so that $x\in A_2$, and
furthermore the maximum in the left hand side of part (c) is finite. 
Hence, by linear programming duality \cite[Corollary~7.1g, p.~91, Eq.~(19)]{Schr}, the
equality in part (c) holds.

(a)$\Rightarrow$(b): By Lemma~\ref{icd}, $(t^\alpha)^r\in I^{rn}$ for 
some $r\in\mathbb{N}_+$. Hence, 
$$
r\alpha=p_1v_1+\cdots+p_qv_q+\delta=Ap+\delta\geq Ap,
$$
where $p=(p_1,\ldots,p_q)\in\mathbb{N}^q$, $|p|=rn$ and
$\delta\in\mathbb{N}^s$. Therefore, making $\lambda=p/r$, one obtains
that $\lambda\in\mathbb{Q}_+^q$, $A\lambda\leq\alpha$ and $|\lambda|=n$. 

(b)$\Rightarrow$(c): This is clear because $\max\{\langle y, 1\rangle\mid 
y\geq 0;\, Ay\leq\alpha \}\geq n$ and, as noted above, the equality of part
(c) holds. 

(c)$\Rightarrow$(a):  Pick an optimal feasible solution
$\lambda=(\lambda_1,\ldots,\lambda_q)\in\mathbb{Q}_+^q$ where the
maximum in the linear programming duality equation is attained. 
Then, $|\lambda|\geq n$ and $A\lambda\leq\alpha$. Choose
$r\in\mathbb{N}_+$ such that $r\lambda\in\mathbb{N}^q$. Hence
$$
A(r\lambda)=(r\lambda_1)v_1+\cdots+(r\lambda_q)v_q\leq r\alpha,
$$
and consequently $(t^{\alpha})^r\in I^{r|\lambda|}\subset
I^{rn}$. Thus, by Lemma~\ref{icd}, $t^\alpha\in\overline{I^n}$.
\end{proof}

\begin{proposition}\label{mg-test}
Let $I$ be a monomial
ideal of $S$ and let $A$ be its incidence matrix. A monomial $t^\alpha\in
S$ is a minimal generator of
$\overline{I^n}$ if and only if the following two conditions hold.
\begin{align}
&\max\{\langle y, 1\rangle\mid 
y\geq 0;\, Ay\leq\alpha \}=\min\{\langle\alpha,x\rangle\mid x\geq 0;\,
xA\geq
1\}\geq n.\label{jul10-22-1}\\
&\max\{\langle y, 1\rangle\mid 
y\geq 0;\, Ay\leq\alpha-e_i\}=\min\{\langle\alpha-e_i,x\rangle\mid x\geq 0;\,
xA\geq
1\}<n\label{jul10-22-2}\\
&\mbox{for each }e_i\mbox{ for which }\alpha-e_i\geq 0.\nonumber 
\end{align}
\end{proposition}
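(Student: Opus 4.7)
The plan is to reduce the statement to the membership test of Proposition~\ref{membership-test-n} combined with the standard description of minimal generators of a monomial ideal. The key observation is that both conditions in the proposition have exactly the form of the linear programming equality and the threshold inequality appearing in Proposition~\ref{membership-test-n}(c), so each can be translated into a statement about membership in $\overline{I^n}$.

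First I would apply Proposition~\ref{membership-test-n} to $t^\alpha$: the equality of the max and min together with the inequality $\geq n$ in \eqref{jul10-22-1} is equivalent to $t^\alpha\in\overline{I^n}$. Next, for every index $i$ with $\alpha-e_i\in\mathbb{N}^s$, I would apply Proposition~\ref{membership-test-n} to the monomial $t^{\alpha-e_i}$: the linear programming duality equality in \eqref{jul10-22-2} is guaranteed by the same argument used there (the feasible sets are still nonempty and the maximum is finite because $\alpha-e_i\geq 0$), while the strict inequality $<n$ is equivalent, by the contrapositive of Proposition~\ref{membership-test-n}, to $t^{\alpha-e_i}\notin\overline{I^n}$.

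It then suffices to prove the following statement about monomial ideals: a monomial $t^\alpha$ is a minimal generator of a monomial ideal $J$ if and only if $t^\alpha\in J$ and $t^{\alpha-e_i}\notin J$ for every $i$ with $\alpha_i\geq 1$. Since $\overline{I^n}$ is a monomial ideal (by Eq.~\eqref{jun21-21}), this applies. One direction is immediate: if $t^{\alpha-e_i}\in J$ for some such $i$, then $t^\alpha$ is a multiple of a strictly smaller generator and so cannot be a minimal generator. For the converse, suppose $t^\alpha\in J$ is not a minimal generator; then some proper divisor $t^\beta$ of $t^\alpha$ lies in $J$, and choosing any $i$ with $\beta_i<\alpha_i$ gives $t^\beta\mid t^{\alpha-e_i}$, so $t^{\alpha-e_i}\in J$.

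There is no real obstacle here; the only subtle point is making sure the duality equality in \eqref{jul10-22-2} genuinely carries over from Proposition~\ref{membership-test-n}, which it does precisely because the restriction $\alpha-e_i\geq 0$ keeps the primal feasible region nonempty and the primal optimum finite, so linear programming duality applies verbatim.
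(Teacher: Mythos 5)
Your proposal is correct and follows essentially the same route as the paper: both reduce the statement to Proposition~\ref{membership-test-n} applied to $t^\alpha$ and to each $t^{\alpha-e_i}$ with $\alpha-e_i\geq 0$, combined with the standard fact that a monomial of a monomial ideal $J$ is a minimal generator exactly when no $t^{\alpha-e_i}$ lies in $J$. Your extra remark that the LP duality equality in \eqref{jul10-22-2} persists because $\alpha-e_i\geq 0$ keeps the feasible regions nonempty is a correct (and slightly more explicit) version of what the paper leaves implicit.
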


\begin{proof} $\Rightarrow$) By Proposition~\ref{membership-test-n},
Eq.~\eqref{jul10-22-1} holds. If Eq.~\eqref{jul10-22-2} does not
hold for some $i$ for which $\alpha-e_i\geq 0$, then by
Proposition~\ref{membership-test-n} one has
$t^{\alpha-e_i}\in\overline{I^n}$ and $t^\alpha=t_it^{\alpha-e_i}$, a
contradiction.

$\Leftarrow$) By Eq.~\eqref{jul10-22-1} and
Proposition~\ref{membership-test-n}, one has
that $t^\alpha\in\overline{I^n}$. We argue by contradiction assuming that
$t^\alpha\notin\mathcal{G}(\overline{I^n})$. Then, there
is $t^\beta\in\mathcal{G}(\overline{I^n})$ such that
$t^\alpha=t^\delta t^\beta$ and $t_i$ divides $t^\delta$ for some $i$.
Hence $t^{\alpha-e_i}\in\overline{I^n}$ and, by
Proposition~\ref{membership-test-n}, 
Eq.~\eqref{jul10-22-2} does not hold, a contradiction. 
\end{proof}

\begin{corollary}\label{n-th-power}
If $I=(t^{v_1},\ldots,t^{v_q})$ is a monomial ideal, then
$\overline{I^n}=\overline{(t^{nv_1},\ldots,t^{nv_q})}$ for $n\geq 1$.
\end{corollary}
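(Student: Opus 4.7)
The plan is to reduce the claim to the membership test of Proposition~\ref{membership-test-n} by comparing the incidence matrices of $I$ and $J:=(t^{nv_1},\ldots,t^{nv_q})$. The key observation is that the componentwise partial order on $\mathbb{N}^s$ is preserved under scaling $v_i\mapsto nv_i$, so $\{nv_1,\ldots,nv_q\}$ is the minimal generating set of $J$ whenever $\{v_1,\ldots,v_q\}$ is the minimal generating set of $I$, and the incidence matrix of $J$ is exactly $nA$ where $A$ is the incidence matrix of $I$.

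With this in hand, I would apply Proposition~\ref{membership-test-n} twice. Applied to $I$ with exponent $n$, it characterizes $t^\alpha\in\overline{I^n}$ by the existence of $\lambda\in\mathbb{Q}_+^q$ satisfying $A\lambda\leq\alpha$ and $|\lambda|\geq n$. Applied to $J$ with exponent $1$, it characterizes $t^\alpha\in\overline{J}$ by the existence of $\mu\in\mathbb{Q}_+^q$ satisfying $(nA)\mu\leq\alpha$ and $|\mu|\geq 1$. The substitution $\lambda=n\mu$ is a bijection of $\mathbb{Q}_+^q$ with itself that transforms one feasibility system into the other: $A\lambda\leq\alpha$ becomes $(nA)\mu\leq\alpha$, and $|\lambda|\geq n$ becomes $|\mu|\geq 1$. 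Hence the two integrally closed monomial ideals $\overline{I^n}$ and $\overline{J}$ contain exactly the same monomials, so they are equal.

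There is essentially no serious obstacle in this argument; the only minor bookkeeping point is verifying that scaling exponents by $n$ introduces no new divisibility relations among the generators, which is immediate from $t^{nv_i}\mid t^{nv_j}\iff nv_i\leq nv_j\iff v_i\leq v_j\iff t^{v_i}\mid t^{v_j}$. This justifies taking $nA$ as the incidence matrix of $J$ and keeps the application of Proposition~\ref{membership-test-n} on firm footing with its minimal-generating-set hypothesis. The rest is just a one-line scaling of a linear feasibility system.
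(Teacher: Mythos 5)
Your proof is correct and follows essentially the same route as the paper: both apply the membership test of Proposition~\ref{membership-test-n} together with the observation that $nA$ is the incidence matrix of $J=(t^{nv_1},\ldots,t^{nv_q})$, using the scaling $\lambda\mapsto\lambda/n$ of the feasibility system. The only cosmetic difference is that the paper gets the inclusion $\overline{J}\subset\overline{I^n}$ for free from $J\subset I^n$ rather than running the scaling argument in both directions, and your extra check that $\{t^{nv_1},\ldots,t^{nv_q}\}$ remains a minimal generating set is a point the paper leaves implicit.
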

\begin{proof} 
Let $A$ be the incidence matrix of $I$. 
We set $J=(t^{nv_1},\ldots,t^{nv_q})$. The inclusion
$\overline{I^n}\supset\overline{J}$ is clear because $I^n\supset J$.
To show the reverse inclusion take $t^\alpha\in\overline{I^n}$. Then,
by Proposition~\ref{membership-test-n}, $A\lambda\leq\alpha$ for some
$\lambda\in\mathbb{Q}_+^q$ with $|\lambda|\geq n$. Hence
$(nA)(\lambda/n)\leq\alpha$ and, by
Proposition~\ref{membership-test-n}, we get that $t^\alpha\in\overline{J}$
because $nA$ is the incidence matrix of $J$ and $|\lambda/n|\geq 1$.
\end{proof}

\begin{corollary}\cite[Corollary~2.5]{poset}\label{crit-rounding-normali} 
Let $I=(x^{v_1},\ldots,x^{v_q})$ be a monomial
ideal and let $A$ be the matrix with column vectors 
$v_1,\ldots,v_q$. Then, $I$ is a normal ideal if and only if 
the system $x\geq 0;xA\geq{1}$ has the integer rounding
property. 
\end{corollary}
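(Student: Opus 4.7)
The plan is to translate both $\overline{I^n}$ and $I^n$ into conditions on two closely related optimization quantities, after which the integer rounding property becomes a direct restatement of the equality $\overline{I^n}=I^n$. Define
\[
M_{\mathbb{Q}}(\alpha):=\max\{\langle y,1\rangle\mid y\geq 0,\ Ay\leq\alpha\},\qquad M_{\mathbb{N}}(\alpha):=\max\{\langle y,1\rangle\mid y\in\mathbb{N}^q,\ Ay\leq\alpha\}.
\]
By Proposition~\ref{membership-test-n}, $t^\alpha\in\overline{I^n}$ is equivalent to $M_{\mathbb{Q}}(\alpha)\geq n$. Unraveling the definition of $I^n$, writing $t^\alpha=(t^{v_1})^{m_1}\cdots(t^{v_q})^{m_q}t^\delta$ with $|m|=n$ and $\delta\in\mathbb{N}^s$, one sees that $t^\alpha\in I^n$ iff there is $m\in\mathbb{N}^q$ with $|m|=n$ and $Am\leq\alpha$; truncating a larger integer vector componentwise, this is equivalent to $M_{\mathbb{N}}(\alpha)\geq n$.

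I would next dispose of feasibility issues: for $\alpha\in\mathbb{N}^s$ both maxima are nonnegative and finite, because each $v_j\neq 0$ and the columns of $A$ are nonnegative, so the inequalities $Ay\leq\alpha$ together with $y\geq 0$ restrict every $y_j$ to a bounded interval. For any integral $\alpha$ with a negative entry, $y\geq 0$ and $Ay\leq\alpha$ are jointly infeasible since $Ay\geq 0$. Hence the integer rounding property \eqref{irp-eq1} has content only at points $\alpha\in\mathbb{N}^s$, and these are precisely the vectors relevant to monomial membership.

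Now assume $I$ is normal and fix $\alpha\in\mathbb{N}^s$. Set $\ell=\lfloor M_{\mathbb{Q}}(\alpha)\rfloor$. By Proposition~\ref{membership-test-n}, $t^\alpha\in\overline{I^\ell}=I^\ell$, hence $M_{\mathbb{N}}(\alpha)\geq\ell$; since $M_{\mathbb{N}}(\alpha)\leq M_{\mathbb{Q}}(\alpha)$ and $M_{\mathbb{N}}(\alpha)\in\mathbb{Z}$, one also has $M_{\mathbb{N}}(\alpha)\leq\ell$, giving the rounding equality. Conversely, if the rounding property holds and $t^\alpha\in\overline{I^n}$, then $M_{\mathbb{Q}}(\alpha)\geq n$, so $\lfloor M_{\mathbb{Q}}(\alpha)\rfloor\geq n$, so $M_{\mathbb{N}}(\alpha)\geq n$, whence $t^\alpha\in I^n$ and the equality $\overline{I^n}=I^n$ holds for all $n\geq 1$.

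The argument is essentially a direct translation through Proposition~\ref{membership-test-n}; the only mildly delicate point is the brief finiteness/feasibility discussion, which is needed to match the statement of the rounding property with the monomial setting.
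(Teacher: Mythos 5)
Your proof is correct, and it follows the same overall strategy as the paper: translate both normality and the rounding property through the LP characterization of Proposition~\ref{membership-test-n}. Your reverse implication is essentially identical to the paper's. The forward implication differs only in which auxiliary result carries the load: the paper takes an optimal $\lambda$ with $A\lambda\leq\alpha$ and invokes Proposition~\ref{normality-criterion-ip} to produce an integral $m$ with $Am\leq\alpha$ and $|m|=\lfloor|\lambda|\rfloor$, whereas you apply Proposition~\ref{membership-test-n} at level $\ell=\lfloor M_{\mathbb{Q}}(\alpha)\rfloor$ and then use normality in the form $\overline{I^{\ell}}=I^{\ell}$ to conclude $M_{\mathbb{N}}(\alpha)\geq\ell$. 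Your route is marginally more economical (it never needs Proposition~\ref{normality-criterion-ip}), at the cost of one small edge case, $\ell=0$, where Proposition~\ref{membership-test-n} (stated for $n\geq 1$) does not literally apply; there the desired equality $M_{\mathbb{N}}(\alpha)=\ell=0$ is immediate, since $y=0$ is feasible and $M_{\mathbb{N}}(\alpha)\leq M_{\mathbb{Q}}(\alpha)<1$, so this is a one-line remark rather than a gap. Your feasibility and finiteness discussion is sound and makes explicit what the paper leaves implicit: a negative entry of $\alpha$ makes $y\geq 0$, $Ay\leq\alpha$ infeasible because $Ay\geq 0$, and for $\alpha\in\mathbb{N}^{s}$ each $y_{j}$ is bounded because every column $v_{j}$ of $A$ is a nonzero nonnegative vector, so the relevant vectors $\alpha$ are exactly those arising from monomials; likewise your reduction of ``$t^{\alpha}\in I^{n}$'' to $M_{\mathbb{N}}(\alpha)\geq n$ by componentwise truncation is the same observation the paper uses when it writes $t^{\alpha}\in I^{|m|}\subset I^{n}$.
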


\begin{proof} $\Rightarrow$) Assume that $I$ is normal. Let
$\alpha$ be an integral vector for which the 
right-hand side of Eq.~\eqref{irp-eq1} is finite. 
Therefore
\begin{equation*}
{\rm max}\{\langle y,{1}\rangle\mid y\geq 0;
Ay\leq \alpha\}=\langle \lambda,{1}\rangle=|\lambda|
\end{equation*}  
for some $\lambda\in\mathbb{Q}_+^q$ with $A\lambda\leq\alpha$. Note
that $\alpha\in\mathbb{N}^s$. In general one
has
\begin{equation}\label{irp-eq3}
{\rm max}\{\langle y,{1}\rangle\mid y\in\mathbb{N}^q;\,Ay\leq \alpha\} 
\leq\lfloor{\rm max}\{\langle y,{1}\rangle\mid y\geq 0;
Ay\leq \alpha\}\rfloor=\lfloor|\lambda|\rfloor.
\end{equation}  
\quad Then, by Proposition~\ref{normality-criterion-ip}, there is 
$m\in\mathbb{N}^q$ satisfying $Am\leq\alpha$ and
$|\lambda|=|m|+\epsilon$ with $0\leq\epsilon<1$. Thus, the left-hand
side of Eq.~\eqref{irp-eq3} is at least $|m|$, the right-hand
side of Eq.~\eqref{irp-eq3} is $|m|$, and equality holds in
Eq.~\eqref{irp-eq3}. 

$\Leftarrow$) Assume that the system $x\geq0;xA\geq 1$ has the integer
rounding property.  To prove that $I$ is normal take
$t^\alpha\in\overline{I^n}$, $n\geq 1$. Then, by
Proposition~\ref{membership-test-n}, $A\lambda\leq\alpha$ for 
some $\lambda\in\mathbb{Q}_+^q$ with $|\lambda|\geq n$. Therefore
\begin{equation*}
{\rm max}\{\langle y,{1}\rangle \mid y\in\mathbb{N}^q;\, Ay\leq \alpha\} 
=\lfloor{\rm max}\{\langle y,{1}\rangle\mid y\geq 0;
Ay\leq \alpha\}\rfloor\geq\lfloor|\lambda|\rfloor\geq n.
\end{equation*}  
\quad Hence, there is $m\in\mathbb{N}^q$ such that
$Am\leq\alpha$, $|m|\geq n$, and consequently $t^\alpha\in
I^{|m|}\subset I^n$.  
\end{proof}

The \textit{support} of a monomial
$t^a\in S$, $a=(a_1,\ldots,a_s)$, denoted by ${\rm supp}(t^a)$, is
the set of all variables $t_i$ such that $a_i>0$.

\begin{proposition}\label{may21-22} Let $\mathcal{C}$ be a clutter and let
$I_c(\mathcal{C})$ be its ideal of covers. If $I_c(\mathcal{C})$ is
normal, then $I_c(\mathcal{H})$ is normal for any minor $\mathcal{H}$ of $\mathcal{C}$.
\end{proposition}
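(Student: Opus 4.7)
The plan is to reduce the statement to \cite[Proposition~4.3]{normali} (quoted just before the statement), by passing to the blocker. The bridge is the classical duality for clutters: blocking is an involution, $(\mathcal{C}^\vee)^\vee=\mathcal{C}$, and blocking interchanges deletion and contraction at every vertex, i.e.,
\[
(\mathcal{C}\setminus t_i)^\vee=\mathcal{C}^\vee/t_i \quad\text{and}\quad (\mathcal{C}/t_i)^\vee=\mathcal{C}^\vee\setminus t_i
\]
(see, e.g., \cite[Chapter~1]{cornu-book}). These two facts I would simply cite.

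From them, a straightforward induction on the length of the sequence of deletions and contractions used to define a minor yields the key combinatorial statement: if $\mathcal{H}$ is any minor of $\mathcal{C}$, then $\mathcal{H}^\vee$ is a minor of $\mathcal{C}^\vee$, obtained by performing the same sequence of operations on $\mathcal{C}^\vee$ with each deletion turned into a contraction and each contraction turned into a deletion. With this in hand the rest is bookkeeping. Since by definition $I_c(\mathcal{C})=I(\mathcal{C}^\vee)$, the hypothesis is that $I(\mathcal{C}^\vee)$ is a normal ideal, and \cite[Proposition~4.3]{normali} applied to the clutter $\mathcal{C}^\vee$ then gives that $I(\mathcal{K})$ is normal for every minor $\mathcal{K}$ of $\mathcal{C}^\vee$. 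Taking $\mathcal{K}:=\mathcal{H}^\vee$ for an arbitrary minor $\mathcal{H}$ of $\mathcal{C}$, we conclude that $I(\mathcal{H}^\vee)=I_c(\mathcal{H})$ is normal, as required.

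The only step that is not completely mechanical is the verification of the two blocker identities displayed above, but these are classical results of Edmonds--Fulkerson/Lehman type and are standard in the combinatorial optimization literature, so no genuine obstacle is expected. All of the substantive normality content is absorbed into \cite[Proposition~4.3]{normali}, and the present proposition is essentially its \emph{blocker counterpart}.
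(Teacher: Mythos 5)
Your proposal is correct, but it follows a genuinely different route from the paper. The paper proves the proposition directly: for $\mathcal{H}=\mathcal{C}\setminus t_s$ (resp.\ $\mathcal{C}/t_s$) it takes $t^a\in\overline{I_c(\mathcal{H})^n}$, observes that each minimal cover of $\mathcal{H}$ either is a minimal cover of $\mathcal{C}$ or becomes one after multiplying by $t_s$, deduces $t^at_s^n\in\overline{I_c(\mathcal{C})^n}=I_c(\mathcal{C})^n$, and then descends back to $I_c(\mathcal{H})^n$ using that every minimal vertex cover of $\mathcal{C}$ contains (resp.\ maps onto, after setting $t_s=1$) a minimal cover of $\mathcal{H}$; no blocker duality and no appeal to \cite[Proposition~4.3]{normali} occur. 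Your argument instead transports the known minor-stability of normality for edge ideals across the classical exchange identities $(\mathcal{C}\setminus t_i)^\vee=\mathcal{C}^\vee/t_i$ and $(\mathcal{C}/t_i)^\vee=\mathcal{C}^\vee\setminus t_i$; these are indeed standard (they are in \cite{cornu-book} and are easy to verify directly from the definitions), your induction correctly shows that $\mathcal{H}^\vee$ is a minor of $\mathcal{C}^\vee$, and then $I_c(\mathcal{H})=I(\mathcal{H}^\vee)$ is normal by \cite[Proposition~4.3]{normali} applied to $\mathcal{C}^\vee$ — note that the involution $(\mathcal{C}^\vee)^\vee=\mathcal{C}$ is not actually needed for this direction. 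Two minor points to tidy up: you should either prove the two exchange identities in a line or two or give a precise reference, since the paper nowhere states them; and the degenerate minors (a vertex lying in every edge, so that deletion kills all edges or contraction creates the empty edge) should be dispatched by the usual conventions, under which the relevant cover ideal is the unit ideal and trivially normal — the paper's own proof sidesteps the analogous issue by assuming $t_s$ is not isolated. The trade-off: your reduction is shorter and makes transparent that the proposition is exactly the blocker-dual of the edge-ideal statement, while the paper's proof is self-contained at the level of monomial factorizations, and its device of passing from $t^a$ to $t^at_s^n$ serves as a template for the more delicate argument in Theorem~\ref{may14-22}.
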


\begin{proof} It suffices to show that $I_c(\mathcal{C}\setminus v)$ and
$I_c(\mathcal{C}/v)$ are normal for any $v\in V(\mathcal{C})$. 
We set $V(\mathcal{C})=\{t_1,\ldots,t_s\}$,
$\mathcal{H}=\mathcal{C}\setminus v$, $\mathcal{D}=\mathcal{C}/v$,
and $v=t_s$. We may assume that $t_s$ is not an isolated vertex of
$\mathcal{C}$, i.e., there is at least one edge of $\mathcal{C}$ that contains $t_s$.

To prove that $I_c(\mathcal{C}\setminus t_s)$
is normal, we show that $I_c(\mathcal{H})^n=\overline{I_c(\mathcal{H})^n}$ 
for all $n\geq 1$. The inclusion $I_c(\mathcal{H})^n\subset \overline{I_c(\mathcal{H})^n}$ holds in
general. To show the reverse inclusion take
$t^a=t_1^{a_1}\cdots t_{s-1}^{a_{s-1}}\in\overline{I_c(\mathcal{H})^n}$,
$a=(a_1,\ldots,a_{s-1},0)$. Then, there is $k\geq 1$ such that
$(t^a)^k\in I_c(\mathcal{H})^{kn}$ and we can write
\begin{equation*}
(t^a)^k=t^{b_1}\cdots t^{b_{nk}}t^\delta,
\end{equation*}
with $t^{b_i}\in\mathcal{G}(I_c(\mathcal{H}))$ for $i=1,\ldots,kn$. We may
assume that $t^{b_1},\ldots,t^{b_r}$ are in $\mathcal{G}(I_c(\mathcal{C}))$ and
$t^{b_{r+1}},\ldots,t^{b_{kn}}$ are not in $\mathcal{G}(I_c(\mathcal{C}))$. Note
that $t^{b_{r+1}}t_s,\ldots,t^{b_{kn}}t_s$ are in
$\mathcal{G}(I_c(\mathcal{C}))$. Therefore, 
\begin{equation*}
(t^at_s^n)^k=t^{b_1}\cdots t_r^{b_r}(t^{b_{r+1}}t_s)\cdots
(t^{b_{kn}}t_s)t_s^{r} t^\delta,
\end{equation*}
and consequently $(t^at_s^n)^k\in I_c(\mathcal{C})^{kn}$, that is,
$t^at_s^n\in\overline{I_c(\mathcal{C})^n}=I_c(\mathcal{C})^n$. Then, 
$t^at_s^n=t^{c_1}\cdots t^{c_n}t^\gamma$ with
$t^{c_i}\in\mathcal{G}(I_c(\mathcal{C}))$ for $i=1,\ldots,n$. Any minimal vertex
cover of $\mathcal{C}$ contains a minimal vertex cover of $\mathcal{H}$. Hence, we can
write each $t^{c_i}$ as $t^{c_i}=t^{d_i}t^{\epsilon_i}$ with
$t^{d_i}\in\mathcal{G}(I_c(\mathcal{H}))$. From the equality 
\begin{equation*}
t^at_s^n=(t^{d_1}\cdots t^{d_{n}})(t^{\epsilon_1}\cdots
t^{\epsilon_{n}})t^\gamma,
\end{equation*}
we obtain $t^a=t^{d_1}\cdots t^{d_{n}}t^\epsilon$ because
$t_s\notin{\rm supp}(t^{d_i})$ for $i=1,\ldots,n$, and $t^a\in
I_c(\mathcal{H})^n$.

To prove that $I_c(\mathcal{C}/t_s)$
is normal, we show that $I_c(\mathcal{D})^n=\overline{I_c(\mathcal{D})^n}$ 
for all $n\geq 1$. The inclusion $I_c(\mathcal{D})^n\subset
\overline{I_c(\mathcal{D})^n}$ holds in
general. To show the reverse inclusion take
$t^a=t_1^{a_1}\cdots t_{s-1}^{a_{s-1}}\in\overline{I_c(\mathcal{D})^n}$,
$a=(a_1,\ldots,a_{s-1},0)$. Then, there is $k\geq 1$ such that
$(t^a)^k\in I_c(\mathcal{D})^{kn}$ and we can write
\begin{equation*}
(t^a)^k=t^{b_1}\cdots t^{b_{nk}}t^\delta,
\end{equation*}
with $t^{b_i}\in\mathcal{G}(I_c(\mathcal{D}))$ for $i=1,\ldots,kn$.
Let $f_i$ be the support of $t^{b_i}$. Then, either $f_i\in
E(\mathcal{C}^\vee)$ or $f_i=e\setminus{t_s}$ for some $e\in
E(\mathcal{C}^\vee)$ for which $t_s\in e$. Thus, either
$t^{b_i}\in\mathcal{G}(I_c(\mathcal{C}))$ or
$t_st^{b_i}\in\mathcal{G}(I_c(\mathcal{C}))$. Hence, 
$(t^at_s^n)^k\in I_c(\mathcal{C})^{kn}$, that is,
$t^at_s^n\in\overline{I_c(\mathcal{C})^n}=I_c(\mathcal{C})^n$. Then, 
$t^at_s^n=t^{c_1}\cdots t^{c_n}t^\gamma$ with
$t^{c_i}\in\mathcal{G}(I_c(\mathcal{C}))$ for $i=1,\ldots,n$. Making 
$t_s=1$, it follows readily that $t^a\in I_c(\mathcal{D})^n$ because
each $t^{c_i}$ is divisible by some monomial $t^{u_i}$ in
$\mathcal{G}(I_c(\mathcal{D}))$.
\end{proof}

\section{Ideals generated by monomials of degree
$2$}\label{normality-degree2}

In this section we use Hilbert bases, Ehrhart
rings of lattice polytopes, and integer rounding properties, to study the
normality of ideals generated by monomials of degree $2$. 
Throughout this section $I=(t^{v_1},\ldots,t^{v_q})$ is an ideal
generated by monomials of degree $2$ and $A$ is the incidence matrix
of $I$. Let $\mathcal{B}$, $\mathcal{Q}$, $R=K[\mathbb{N}\mathcal{B}]$, and ${\rm
Er}(\mathcal{Q})$ be as in Eqs.~\eqref{jul20-22}--\eqref{jul20-22-4}.

\begin{theorem}\label{normal-b} 
Let $I=(t^{v_1},\ldots,t^{v_q})$ 
be an ideal of $S$ generated by monomials
of degree $2$. Then, $I$ is a normal ideal if and only if
$\mathcal{B}$ is a Hilbert basis.
\end{theorem}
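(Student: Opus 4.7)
The plan is to recast the Hilbert basis property of $\mathcal{B}$ as the condition that, whenever a membership of $t^a$ in $\overline{I^n}$ is witnessed by a non-negative rational combination of the $v_j$'s, it is also witnessed by a non-negative \emph{integral} combination; this is exactly the normality of $I$. Proposition~\ref{membership-test-n} is the bridge between the two viewpoints, and the fact that every $|v_j|=2$ is what matches up the parameter ranges on the two sides.

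I would start by describing the cone $\mathbb{R}_+\mathcal{B}$ and the semigroup $\mathbb{N}\mathcal{B}$ explicitly. Writing
\[
(a,b)=\lambda_0 e_{s+1}+\sum_{i=1}^s\lambda_i(e_i+e_{s+1})+\sum_{j=1}^q\mu_j(v_j,1)
\]
with non-negative coefficients gives $a=\lambda+A\mu$ (where $\lambda=(\lambda_1,\ldots,\lambda_s)$) and $b=\lambda_0+|\lambda|+|\mu|$. Because $|v_j|=2$ for every $j$, one has $|\lambda|=|a|-2|\mu|$ and therefore $\lambda_0=b-|a|+|\mu|$. Hence $(a,b)\in\mathbb{R}_+\mathcal{B}$ if and only if $a\ge 0$, $b\ge 0$, and there exists $\mu\in\mathbb{Q}_+^q$ with $A\mu\le a$ and $|\mu|\ge|a|-b$; the description of $\mathbb{N}\mathcal{B}$ is obtained verbatim by replacing $\mathbb{Q}_+^q$, $\mathbb{R}_+^s$, $\mathbb{R}_+$ by $\mathbb{N}^q$, $\mathbb{N}^s$, $\mathbb{N}$. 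The only bookkeeping to check is that the $\lambda_i=a_i-(A\mu)_i$ and $\lambda_0=b-|a|+|\mu|$ recovered from such $\mu$ are automatically non-negative integers in the integral version, which is immediate.

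Next, I would invoke Proposition~\ref{membership-test-n}: the existence of $\mu\in\mathbb{Q}_+^q$ with $A\mu\le a$ and $|\mu|\ge n$ is exactly $t^a\in\overline{I^n}$, and its integral analogue is $t^a\in I^n$. Substituting $n:=|a|-b$, the above characterizations say that $\mathcal{B}$ is a Hilbert basis if and only if for every $a\in\mathbb{N}^s$ and every integer $n\le|a|$ the implication $t^a\in\overline{I^n}\Rightarrow t^a\in I^n$ holds (with the convention $\overline{I^n}=I^n=S$ for $n\le 0$). The degree-$2$ hypothesis now closes the argument: if $t^a\in\overline{I^n}$ for some $n\ge 1$, Proposition~\ref{membership-test-n} supplies $\mu$ with $2n\le 2|\mu|=|A\mu|\le|a|$, so the side-condition $n\le|a|$ is automatic. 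Thus the above implication reduces to $\overline{I^n}\subseteq I^n$ for every $n\ge 1$, i.e., to the normality of $I$.

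The only genuine obstacle, such as it is, lies in the first step, where one has to track the non-negativity of $\lambda_0$ as $|\mu|\ge|a|-b$ and the equality $|\lambda|=|a|-2|\mu|$ through the coefficient comparison; this is precisely where the degree-$2$ identity $|A\mu|=2|\mu|$ intervenes to make the parameter $n=|a|-b$ appearing in the Hilbert basis condition coincide with the index $n$ of the integral closure, so that the equivalence drops out with no slack on either side.
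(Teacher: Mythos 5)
Your proposal is correct, but it follows a genuinely different route from the paper. You eliminate the slack variables from a representation $(a,b)=\lambda_0e_{s+1}+\sum_i\lambda_i(e_i+e_{s+1})+\sum_j\mu_j(v_j,1)$ and obtain a single parametric description of both $\mathbb{R}_+\mathcal{B}$ and $\mathbb{N}\mathcal{B}$: membership amounts to the existence of $\mu\geq 0$ (rational, resp.\ integral) with $A\mu\leq a$ and $|\mu|\geq |a|-b$, the degree-$2$ identity $|A\mu|=2|\mu|$ being what collapses $\lambda_0\geq 0$ to this single inequality. Matching this against Proposition~\ref{membership-test-n} with $n=|a|-b$ (and the trivial integral analogue $t^a\in I^n\Leftrightarrow\exists\,m\in\mathbb{N}^q,\ Am\leq a,\ |m|\geq n$) turns the Hilbert basis property of $\mathcal{B}$ directly into $\overline{I^n}\subseteq I^n$ for all $n\geq 1$, with the bound $2n\leq 2|\mu|\leq|a|$ guaranteeing $b=|a|-n\geq 0$, so no cases are lost. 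The paper instead argues in two asymmetric steps: for ``normal $\Rightarrow$ Hilbert basis'' it invokes the rounding criterion of Proposition~\ref{normality-criterion-ip} and an $\epsilon$-accounting of coefficient sums, and for the converse it passes through the Rees cone $\mathcal{A}'$ of Lemma~\ref{normal-hilbert}, converting an $\mathbb{N}\mathcal{B}$-representation into an $\mathbb{N}\mathcal{A}'$-representation by an explicit multiset-splitting computation. Your argument is shorter and more symmetric, and it isolates cleanly the two places where degree $2$ is used; the paper's version has the merit of exhibiting the explicit passage between $\mathcal{A}'$ and $\mathcal{B}$, which it reuses elsewhere (e.g.\ Corollary~\ref{jul28-22}). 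One small point to make explicit: your ``if and only if'' description of $\mathbb{R}_+\mathcal{B}$ with $\mu\in\mathbb{Q}_+^q$ is only claimed, and only needed, for integral $(a,b)$; there the real coefficients coming from the cone must be replaced by rational ones, which follows from Farkas's lemma (Theorem~\ref{farkas}) or the rationality of the polyhedron $\{\mu\geq 0:\ A\mu\leq a,\ |\mu|\geq |a|-b\}$ --- exactly the step the paper performs at the start of its forward implication --- so this is a one-line fix rather than a gap.
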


\begin{proof} $\Rightarrow$) Assume that $I$ is normal. The
inclusion
$\mathbb{R}_+\mathcal{B}\cap\mathbb{Z}^{s+1}\supset\mathbb{N}\mathcal{B}$
holds in general. To show the reverse inclusion take
$(\alpha,b)\in\mathbb{R}_+\mathcal{B}\cap\mathbb{Z}^{s+1}$,
$\alpha\in\mathbb{N}^s$, $b\in\mathbb{N}$. By Farkas's lemma
(Theorem~\ref{farkas}), we obtain that $(\alpha,b)$ is in
 $\mathbb{Q}_+\mathcal{B}$, that is, we can write
\begin{equation}\label{jun12-22-1}
(\alpha,b)=\tau_1
e_{s+1}+\sum_{i=1}^s\mu_i(e_i+e_{s+1})+\sum_{i=1}^q\lambda_i(v_i,1),
\end{equation}
where $\tau_1,\mu_i$, and $\lambda_i$ are in $\mathbb{Q}_+$. Then,
setting $\lambda=(\lambda_1,\ldots,\lambda_q)$ and
$\mu=(\mu_1,\ldots,\mu_s)$, one has
$A\lambda\leq\alpha$ and $b\geq |\mu|+|\lambda|$. Hence, by
Proposition~\ref{normality-criterion-ip}, there is 
$m=(m_1,\ldots,m_q)\in\mathbb{N}^q$ satisfying $Am\leq\alpha$ and
$|\lambda|=|m|+\epsilon$ with $0\leq\epsilon<1$. Thus  
\begin{equation}\label{jun12-22-2}
\alpha=\sum_{i=1}^s c_ie_i+\sum_{i=1}^qm_iv_i,
\end{equation}
where $c_1,\ldots,c_s$ are in $\mathbb{N}$. Setting
$c=(c_1,\ldots,c_s)$, from Eqs.~\eqref{jun12-22-1} and
\eqref{jun12-22-2}, we get 
\begin{equation}\label{jun12-22-3}
|\mu|+2|\lambda|=|c|+2|m|.
\end{equation}
\quad Therefore, using that $|\lambda|=|m|+\epsilon$, $b\geq
|\mu|+|\lambda|$, and Eq.~\eqref{jun12-22-3}, one has
$$
b+(|m|+\epsilon)=b+|\lambda|\geq
(|\mu|+|\lambda|)+|\lambda|=|c|+2|m|,
$$
and consequently $b\geq |c|+|m|-\epsilon$. We claim that
$b\geq|c|+|m|$. We argue by contradiction assuming that $b<|c|+|m|$. 
Then, $|c|+|m|-\epsilon\leq b\leq |c|+|m|-1$, and we obtain
that $\epsilon\geq 1$, a contradiction. Then, by Eq.~\eqref{jun12-22-2}, we can write
$$
(\alpha,b)=(b-|c|-|m|)e_{s+1}+\sum_{i=1}^sc_i(e_i+e_{s+1})+\sum_{i=1}^qm_i(v_i,1),
$$
and $(\alpha,b)\in\mathbb{N}\mathcal{B}$. This completes the proof
that $\mathcal{B}$ is a Hilbert basis.

$\Leftarrow$) Assume that $\mathcal{B}$ is a Hilbert basis and
set $\mathcal{A}'=\{e_i\}_{i=1}^s\cup\{(v_i,1)\}_{i=1}^q$. To show the
normality of $I$ we need only show that
$\mathbb{R}_+\mathcal{A}'\cap\mathbb{Z}^{s+1}=\mathbb{N}\mathcal{A}'$
(see Lemma~\ref{normal-hilbert}). The
inclusion
$\mathbb{R}_+\mathcal{A}'\cap\mathbb{Z}^{s+1}\supset\mathbb{N}\mathcal{A}'$
holds in general. To show the reverse inclusion take
$(\alpha,b)\in\mathbb{R}_+\mathcal{A}'\cap\mathbb{Z}^{s+1}$,
$\alpha\in\mathbb{N}^s$, $b\in\mathbb{N}$. Then, we can write 
\begin{equation}\label{jun14-22-1}
(\alpha,b)=\sum_{i=1}^s\mu_ie_i+\sum_{i=1}^q\lambda_i(v_i,1),
\end{equation}
where $\mu_i$ and $\lambda_i$ are in $\mathbb{R}_+$. Hence,
setting $\mu=(\mu_1,\ldots,\mu_s)$ and
$\lambda=(\lambda_1,\ldots,\lambda_q)$, one has
$|\alpha|=|\mu|+2|\lambda|=|\mu|+2b$. Noticing that $|\mu|=|\alpha|-2b$ is in
$\mathbb{N}$, by Eq.~\eqref{jun14-22-1}, we obtain that 
$(\alpha,b)+|\mu|e_{s+1}$ is in
$\mathbb{R}_+\mathcal{B}\cap\mathbb{Z}^{s+1}=\mathbb{N}\mathcal{B}$. Therefore, we can write
\begin{equation}\label{jun14-22-2}
(\alpha,b)+|\mu|e_{s+1}=ne_{s+1}+\sum_{i=1}^sc_i(e_i+e_{s+1})+\sum_{i=1}^qm_i(v_i,1),
\end{equation}
where $n$, $c_i$, and $m_i$ are in $\mathbb{N}$. Setting
$c=(c_1,\ldots,c_s)$ and $m=(m_1,\ldots,m_q)$, 
from Eqs.~\eqref{jun14-22-1} and
\eqref{jun14-22-2}, we get the equalities 
\begin{align}
|\alpha|&=|\mu|+2b=|c|+2|m|,\label{jun14-22-3}\\ 
b&=|\lambda|=n+|c|+|m|-|\mu|.\label{jun14-22-4}
\end{align}
\quad Then, from Eq.~\eqref{jun14-22-3} and \eqref{jun14-22-4}, one has 
\begin{equation*}
|\mu|+2b=|c|+2|m|=(b-n-|m|+|\mu|)+2|m|=b-n+|\mu|+|m|,
\end{equation*}
and consequently
$|\mu|+2b=b-n+|\mu|+|m|$. Thus, $b+n=|m|$. Therefore, adding $n$ to both sides of
Eq.~\eqref{jun14-22-4}, we obtain
\begin{equation}\label{jun14-22-5}
|m|=b+n=2n+|c|+|m|-|\mu|\ \therefore\ 2n+|c|-|\mu|=0.
\end{equation}
\quad Consider the multiset 
$$
\mathcal{F}=\{\underbrace{(v_1,1),\ldots,(v_1,1)}_{m_1\tiny\mbox{
times }},\ldots,\underbrace{(v_q,1),\ldots,(v_q,1)}_{m_q\tiny\mbox{
times }}\}.
$$
\quad This multiset has $|m|=n+b$ elements. Pick a multiset 
$\mathcal{F}_1=\{(v_{\ell_1},1),\ldots,(v_{\ell_b},1)\}$ of $b$ vectors
in $\mathcal{F}$ and let
$\mathcal{F}_2=\mathcal{F}\setminus\mathcal{F}_1
=\{(v_{j_1},1),\ldots,(v_{j_n},1)\}$ be the complement of
$\mathcal{F}_1$. Then, by Eqs.~\eqref{jun14-22-2} and
\eqref{jun14-22-5}, one has
\begin{align*}
(\alpha,b)&=(n-|\mu|)e_{s+1}+\sum_{i=1}^sc_i(e_i+e_{s+1})+\sum_{i=1}^qm_i(v_i,1)\\
          &=(n-|\mu|+|c|)e_{s+1}+\sum_{i=1}^sc_ie_i+\sum_{i=1}^n(v_{j_i},1)+
\sum_{i=1}^b(v_{\ell_i},1)\\
         &=(2n-|\mu|+|c|)e_{s+1}+
\sum_{i=1}^sc_ie_i+\sum_{i=1}^nv_{j_i}+\sum_{i=1}^b(v_{\ell_i},1)\\
         &=\sum_{i=1}^sc_ie_i+\sum_{i=1}^nv_{j_i}+\sum_{i=1}^b(v_{\ell_i},1).
\end{align*}
\quad Thus, $(\alpha,b)$ is in $\mathbb{N}\mathcal{A}'$ and the proof
is complete.
\end{proof}

\begin{corollary}\cite[Theorem~3.3]{roundp}\label{jul28-22} 
If $I=(t^{v_1},\ldots,t^{v_q})$ is the edge ideal
of a connected graph, then $I$ is normal if and only if the subring 
$R=K[z,t_1z,\ldots,t_sz,t^{v_1}z,\ldots,t^{v_q}z]$ is normal.
\end{corollary}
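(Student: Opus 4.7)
The plan is to derive this corollary directly from Theorem~\ref{normal-b} by translating the normality of the subring $R$ into the statement that $\mathcal{B}$ is a Hilbert basis. First, I would note that $R=K[\mathbb{N}\mathcal{B}]$ is the semigroup ring of $\mathbb{N}\mathcal{B}$, where $\mathcal{B}$ is the set defined in Eq.~\eqref{jul20-22}. By Theorem~\ref{int-clos-gen}(a)--(b) applied to the generating set $\mathcal{B}$, the integral closure of $R$ is $K[\mathbb{Z}\mathcal{B}\cap\mathbb{R}_+\mathcal{B}]$, so $R$ is normal if and only if
$$
\mathbb{N}\mathcal{B}=\mathbb{Z}\mathcal{B}\cap\mathbb{R}_+\mathcal{B}.
$$

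Next, I would verify the lattice identity $\mathbb{Z}\mathcal{B}=\mathbb{Z}^{s+1}$. Since $\mathcal{B}$ contains $e_{s+1}$ and every $e_i+e_{s+1}$ for $i=1,\ldots,s$, taking differences yields $e_i=(e_i+e_{s+1})-e_{s+1}\in\mathbb{Z}\mathcal{B}$ for every $i$; hence all the standard basis vectors of $\mathbb{R}^{s+1}$ lie in $\mathbb{Z}\mathcal{B}$, and $\mathbb{Z}\mathcal{B}=\mathbb{Z}^{s+1}$. With this identification the previous equivalence becomes
$$
\mathbb{N}\mathcal{B}=\mathbb{Z}^{s+1}\cap\mathbb{R}_+\mathcal{B},
$$
which is precisely the definition of $\mathcal{B}$ being a Hilbert basis (applied in $\mathbb{R}^{s+1}$).

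Combining these two equivalences, the normality of $R$ is equivalent to the Hilbert basis property of $\mathcal{B}$, and by Theorem~\ref{normal-b} the latter is equivalent to the normality of $I$. The essential combinatorial work has already been carried out in Theorem~\ref{normal-b}, so no real obstacle remains; the only item to check is the immediate lattice identity $\mathbb{Z}\mathcal{B}=\mathbb{Z}^{s+1}$, coming from $e_{s+1}$ and the vectors $e_i+e_{s+1}$ in $\mathcal{B}$. Note that the connectedness hypothesis plays no role in this reduction, as it was already absorbed into the proof of Theorem~\ref{normal-b}.
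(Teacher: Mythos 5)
Your proof is correct and follows essentially the same route as the paper: apply Theorem~\ref{int-clos-gen} to identify $\overline{R}=K[\mathbb{Z}^{s+1}\cap\mathbb{R}_+\mathcal{B}]$ (the paper asserts $\mathbb{Z}\mathcal{B}=\mathbb{Z}^{s+1}$ without spelling out the difference argument you give), conclude that normality of $R$ means $\mathcal{B}$ is a Hilbert basis, and invoke Theorem~\ref{normal-b}. Your observation that connectedness is not actually needed is also consistent with the paper, where the hypothesis only reflects the form of the cited result in \cite{roundp}.
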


\begin{proof} The subgroup
$\mathbb{Z}\mathcal{B}$ 
spanned by $\mathcal{B}$ is $\mathbb{Z}^{n+1}$. 
Then, by Theorem~\ref{int-clos-gen}, one
has 
$$
R=K[\{t^az^b\vert\, (a,b)\in\mathbb{N}{\mathcal B}\}]\subset\overline{R}=K[\{t^az^b\vert\, (a,b)\in
\mathbb{Z}^{s+1}\cap \mathbb{R}_+{\mathcal B}\}].
$$
\quad Hence, $R$ is normal if and only if 
$\mathbb{N}{\mathcal B}$ is equal to $\mathbb{Z}^{s+1}\cap\mathbb{R}_+{\mathcal B}$.
Thus, the result follows from Theorem~\ref{normal-b} because $I$ is
generated by squarefree monomials of degree $2$.
\end{proof}

\begin{lemma}\label{lemma-a<=b}
Let $I=(t^{v_1},\ldots,t^{v_q})$ be an ideal 
of $S$ generated by monomials of degree $2$ and let
$\mathcal{A}$ be the set of vectors
$\{e_i\}_{i=1}^s\cup\{v_i\}_{i=1}^q$. The following hold.
\begin{enumerate}
\item[(a)] If 
$\alpha$ is in $\mathbb{N}^s\setminus\{0\}$, $\beta_1,\ldots,\beta_k$ are
in $\mathcal{A}$ and $\sum_{i=1}^k\beta_i\geq\alpha$, 
then there are $\gamma_1,\ldots,\gamma_\ell$ in $\mathcal{A}$
such that $\alpha=\sum_{i=1}^\ell\gamma_i$ and $k\geq\ell$.
\item[(b)] If $a\in\mathbb{Q}_+^s$, 
$b\in{\rm conv}(\{0\}\cup\mathcal{A})\cap\mathbb{Q}^s$ and $a\leq b$, 
then $a\in{\rm conv}(\{0\}\cup\mathcal{A})$.
\end{enumerate}
\end{lemma}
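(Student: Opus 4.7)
The plan is to prove (a) by induction on the nonnegative integer $|\delta|$, where $\delta := \sum_{i=1}^k \beta_i - \alpha \in \mathbb{N}^s$, and then derive (b) by clearing denominators and invoking (a).

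For part (a), if $|\delta|=0$ the hypothesis already yields $\alpha=\sum_i\beta_i$, so taking $\gamma_j=\beta_j$ and $\ell=k$ handles the base case. When $|\delta|>0$, I would fix a coordinate $j$ with $\delta_j>0$. Since $\sum_i(\beta_i)_j=\alpha_j+\delta_j\geq 1$, some $\beta_i$ has positive $j$-th entry, and because $I$ is generated in degree $2$ we get $\beta_i\in\{e_j,\,2e_j,\,e_j+e_{j'}\ (j'\neq j)\}$. I would then perform a local surgery on the list $\beta_1,\ldots,\beta_k$. If possible, delete $\beta_i$ outright (when $\beta_i=e_j$; or $\beta_i=2e_j$ with $\delta_j\geq 2$; or $\beta_i=e_j+e_{j'}$ with $\delta_{j'}\geq 1$), which strictly decreases both $k$ and $|\delta|$; otherwise replace $\beta_i$ by a single unit vector ($2e_j\mapsto e_j$ if $\delta_j=1$, or $e_j+e_{j'}\mapsto e_{j'}$ if $\delta_{j'}=0$), which keeps $k$ fixed but strictly decreases $|\delta|$. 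In each of the five cases, the new list lies in $\mathcal{A}$, still dominates $\alpha$, has at most $k$ elements, and has strictly smaller excess, so induction finishes the argument.

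For part (b), I would first write $b=\sum_i\mu_i e_i+\sum_i\lambda_i v_i$ with $\mu_i,\lambda_i\in\mathbb{Q}_+$ and $\sum_i\mu_i+\sum_i\lambda_i\leq 1$. Such a rational decomposition exists because the set of admissible coefficient tuples is a nonempty rational polyhedron (defined by the rational affine constraints coming from $b$ together with nonnegativity and a slack inequality) and hence contains rational points. Choose $N\in\mathbb{N}_+$ clearing all denominators so that $Na,Nb\in\mathbb{N}^s$ and $N\mu_i,N\lambda_i\in\mathbb{N}$; then $Nb$ is the sum of $k:=\sum_i N\mu_i+\sum_i N\lambda_i\leq N$ vectors of $\mathcal{A}$ (with multiplicity), and $Na\leq Nb$. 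If $Na=0$ then $a=0\in{\rm conv}(\{0\}\cup\mathcal{A})$; otherwise part (a) produces $\gamma_1,\ldots,\gamma_\ell\in\mathcal{A}$ with $Na=\sum_{j=1}^{\ell}\gamma_j$ and $\ell\leq k\leq N$, and dividing by $N$ exhibits $a=\sum_{j=1}^{\ell}(1/N)\gamma_j+(1-\ell/N)\cdot 0$ as a convex combination of $\{0\}\cup\mathcal{A}$.

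The main obstacle I expect is the case analysis in part (a): the substitution of $e_j+e_{j'}$ by $e_{j'}$ (when $\delta_{j'}=0$) is the delicate move because it does not shrink $k$ and relies solely on the strict decrease of $|\delta|$, so choosing $|\delta|$ rather than $k$ as the induction variable is essential. A minor technical point is justifying that the rational point $b\in{\rm conv}(\{0\}\cup\mathcal{A})$ admits a rational convex representation, which is handled by the rational-polyhedron observation above.
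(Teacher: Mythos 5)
Your proposal is correct and follows essentially the same route as the paper: part (a) by a descent/local-surgery argument that removes or shrinks a vector covering an excess coordinate while the excess strictly decreases and the count never grows, and part (b) by choosing a rational convex representation of $b$, clearing denominators, applying (a), and dividing back. Your explicit justification that the convex coefficients can be taken rational is a small extra care the paper leaves implicit, but the substance is the same.
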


\begin{proof} (a) Consider the following procedure. Assume that 
$\sum_{i=1}^k\beta_i\neq\alpha$. Then, the $j$-th entry of
$\sum_{i=1}^k\beta_i$ is greater than the $j$-th entry of 
$\alpha$ for some $j$, and consequently
$\sum_{i=1}^k\beta_i\geq\alpha+e_j$. Hence, $\beta_p\geq e_j$ for 
some $p$, and either $\beta_p=e_j$ or $\beta_p-e_j=e_r$ for some
$1\leq r\leq s$.
Thus
$$
\left(\sum_{i=1}^k\beta_i\right)-e_j=\bigg(\sum_{i\neq
p}\beta_i\bigg)+(\beta_p-e_j)\geq\alpha,
$$
where $\beta_p-e_j=0$ or $\beta_p-e_j\in\mathcal{A}$. If $\big(\sum_{i\neq
p}\beta_i\big)+(\beta_p-e_j)\neq\alpha$, we repeat the procedure.
Since 
$$
\bigg|\bigg(\sum_{i\neq
p}\beta_i\bigg)+(\beta_p-e_j)\bigg|<\left|\sum_{i=1}^k\beta_i\right|,
$$
applying this procedure recursively, we get that 
$\alpha=\sum_{i=1}^\ell\gamma_i$ for some $\gamma_1,\ldots,\gamma_\ell$ in $\mathcal{A}$
and $k\geq\ell$.

(b) One can write $b=\sum_{i=1}^s\mu_ie_i+\sum_{i=1}^q\lambda_iv_i$, 
$\sum_{i=1}^s\mu_i+\sum_{i=1}^q\lambda_i\leq 1$, $\mu_i$, $\lambda_j$
in $\mathbb{Q}_+$ for all $i,j$. If $a=0$, there is nothing to prove.
Assume that $a\neq 0$.  Choose $r\in\mathbb{N}_+$ such that $r\mu_i$,
$r\lambda_j$ are in $\mathbb{N}$ for all $i,j$ and
$ra\in\mathbb{N}^s\setminus\{0\}$. Then
$$
ra\leq
rb=\sum_{i=1}^s(r\mu_i)e_i+\sum_{i=1}^q(r\lambda_i)v_i,
$$
and $k:=\sum_{i=1}^s(r\mu_i)+\sum_{i=1}^q(r\lambda_i)\leq r$. Then, by
part (a), there are $\gamma_1,\ldots,\gamma_\ell$ in $\mathcal{A}$
such that $ra=\sum_{i=1}^\ell\gamma_i$, $\ell\leq k\leq r$, and
$a=\sum_{i=1}^\ell(\gamma_i/r)$. Hence, as $\ell/r\leq 1$, we get
$a\in{\rm conv}(\{0\}\cup\mathcal{A})$.
\end{proof}

\begin{theorem}\label{ehrhart-q} 
Let $I$ be an ideal 
of $S$ generated by monomials of degree $2$ and
let $A$ be the incidence matrix of $I$. Then, 
$\overline{K[\mathbb{N}\mathcal{B}]}={\rm Er}(\mathcal{Q})$, and the equality 
$$ 
K[\mathbb{N}\mathcal{B}]={\rm Er}(\mathcal{Q})
$$
holds if and only if the 
system $x\geq 0;\, xA\leq 1$ has the integer
rounding property.
\end{theorem}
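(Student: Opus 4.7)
The plan has two parts, matching the two claims.

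For the first equality $\overline{K[\mathbb{N}\mathcal{B}]}={\rm Er}(\mathcal{Q})$, I would apply Theorem~\ref{int-clos-gen} to obtain $\overline{K[\mathbb{N}\mathcal{B}]}=K[\mathbb{Z}\mathcal{B}\cap\mathbb{R}_+\mathcal{B}]$. The presence of $e_{s+1}$ and each $e_i+e_{s+1}$ in $\mathcal{B}$ forces $e_i\in\mathbb{Z}\mathcal{B}$ for every $i\in\{1,\ldots,s+1\}$, so $\mathbb{Z}\mathcal{B}=\mathbb{Z}^{s+1}$. Every element of $\mathcal{B}$ has last coordinate $1$ and first $s$ coordinates in $\{0,e_1,\ldots,e_s,v_1,\ldots,v_q\}$; consequently $\mathbb{R}_+\mathcal{B}$ is the cone over $\mathcal{Q}\times\{1\}$, so $(\alpha,b)\in\mathbb{R}_+\mathcal{B}$ exactly when $b\geq 0$ and (when $b>0$) $\alpha/b\in\mathcal{Q}$. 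Intersecting with $\mathbb{Z}^{s+1}$ yields precisely the exponent semigroup of ${\rm Er}(\mathcal{Q})$.

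For the equivalence, the first part shows $K[\mathbb{N}\mathcal{B}]={\rm Er}(\mathcal{Q})$ iff $K[\mathbb{N}\mathcal{B}]$ is normal, which by Theorem~\ref{int-clos-gen} is iff $\mathcal{B}$ is a Hilbert basis of its cone. I would then unpack this condition lattice-theoretically. A point $(\alpha,b)\in\mathbb{Z}^{s+1}\cap\mathbb{R}_+\mathcal{B}$ lies in $\mathbb{N}\mathcal{B}$ iff there exist $c\in\mathbb{N}^s$ and $m\in\mathbb{N}^q$ with $\alpha=c+Am$ and $b\geq |c|+|m|$; using the degree-$2$ hypothesis $|v_i|=2$ and $|c|=|\alpha|-2|m|$, this collapses to the existence of $m\in\mathbb{N}^q$ satisfying $Am\leq\alpha$ and $|m|\geq |\alpha|-b$. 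The rational analog characterizes $(\alpha,b)\in\mathbb{R}_+\mathcal{B}$ by the same statement with $m\in\mathbb{Q}_+^q$. Running $b$ over $\mathbb{N}$, this shows $\mathcal{B}$ is a Hilbert basis iff for every $\alpha\in\mathbb{N}^s$ with finite fractional packing value,
$$
\max\{|m|:m\in\mathbb{N}^q,\,Am\leq\alpha\}=\lfloor\max\{|y|:y\geq 0,\,Ay\leq\alpha\}\rfloor.
$$

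The final step, which I expect to be the main obstacle, is to identify the displayed condition with the integer rounding property for the system $x\geq 0;\,xA\leq 1$ written in Eq.~\eqref{irp-eq2}. The plan is to apply LP duality (Proposition~\ref{membership-test-n}(c)) to rewrite the fractional packing maximum as $\min\{\langle\alpha,x\rangle:x\geq 0,\,xA\geq 1\}$, and then to pass from packing-rounding to covering-rounding by invoking Theorem~\ref{duality-irp}, which exchanges the rounding properties of $x\geq 0;\,xA\geq 1$ and $x\geq 0;\,xA^*\leq 1$ for $\{0,1\}$-matrices, combined with the complementary matrix construction to return to the original matrix $A$. The subtlety is that $A$ may contain columns $2e_j$ whenever some generator of $I$ is $t_j^2$, so $A$ falls outside the $\{0,1\}$-setting of Theorem~\ref{duality-irp}. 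I would handle these diagonal columns by a separate direct analysis, observing that a $2e_j$-column contributes only a trivial constraint on row $j$ in each of the packing and covering LPs, so it can be split off from the rest of the matrix without affecting either rounding property; this reduces the equivalence to the squarefree case, where Theorem~\ref{duality-irp} applies directly.
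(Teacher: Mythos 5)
Your opening steps are sound: the identification $\overline{K[\mathbb{N}\mathcal{B}]}={\rm Er}(\mathcal{Q})$ via Theorem~\ref{int-clos-gen}, the equivalence of $K[\mathbb{N}\mathcal{B}]={\rm Er}(\mathcal{Q})$ with normality of $K[\mathbb{N}\mathcal{B}]$ and with $\mathcal{B}$ being a Hilbert basis, and the lattice computation showing that $\mathcal{B}$ is a Hilbert basis if and only if $\max\{|m| : m\in\mathbb{N}^q,\ Am\leq\alpha\}=\lfloor\max\{|y| : y\geq 0,\ Ay\leq\alpha\}\rfloor$ for all relevant $\alpha$ are all correct; note that this last condition is precisely the integer rounding property of the system $x\geq 0;\,xA\geq 1$ in the sense of Eq.~\eqref{irp-eq1}. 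The genuine gap is your final step. Theorem~\ref{duality-irp} exchanges the rounding property of $x\geq 0;\,xA\geq 1$ with that of $x\geq 0;\,xA^*\leq 1$ for the \emph{complementary} matrix $A^*$; it says nothing about the system $x\geq 0;\,xA\leq 1$ for the same matrix $A$. To ``return to the original matrix'' you would need the equivalence of the rounding properties of $x\geq 0;\,xA^*\leq 1$ and $x\geq 0;\,xA\leq 1$, which by a second application of Theorem~\ref{duality-irp} amounts to the equivalence of the rounding properties of $x\geq 0;\,xA\geq 1$ and $x\geq 0;\,xA^*\geq 1$, i.e., by Corollary~\ref{crit-rounding-normali}, to the statement that $I$ is normal if and only if $I^*$ is normal. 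That statement is Proposition~\ref{duality-i-i*}, which the paper deduces from Theorem~\ref{ehrhart-q} itself (via Corollary~\ref{jul4-22}); so as written your plan is circular, or at best leaves exactly the theorem's content unproven. This failure already occurs in the squarefree case, so the columns $2e_j$ you worry about are not the main obstacle (and your claim that such a column ``splits off'' is itself delicate when the corresponding vertex also meets other generators).

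For contrast, the paper never passes through Hilbert bases or the packing system: after reducing to the case where every variable occurs in some generator, it proves the two inclusions between $K[\mathbb{N}\mathcal{B}]$ and ${\rm Er}(\mathcal{Q})$ directly, working with the covering LP $\min\{|y| : y\geq 0,\ Ay\geq\alpha\}$, linear programming duality, and the degree-$2$ Lemma~\ref{lemma-a<=b} (parts (a) and (b)), the key combinatorial point being that each $e_i$ is dominated by some $v_j$. If you want to keep your reformulation, what must be supplied is an independent argument, valid for incidence matrices of degree-$2$ ideals, that the packing-rounding condition you derived is equivalent to the covering-rounding property of $x\geq 0;\,xA\leq 1$ --- something in the spirit of Lemma~\ref{lemma-a<=b} or a weighted Gallai-type complementation between $\alpha$-matchings and $\alpha$-covers --- rather than Theorem~\ref{duality-irp}, which does not yield it.
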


\begin{proof} We may assume that
$\{t_1,\ldots,t_s\}=\bigcup_{i=1}^q{\rm supp}(t^{v_i})$, i.e., each
variable $t_i$ occurs in at least one minimal generator of $I$. The equality
$\overline{K[\mathbb{N}\mathcal{B}]}={\rm Er}(\mathcal{Q})$ follows readily
from \cite[Theorem~3.9]{ehrhart}.

$\Rightarrow$) Assume that $K[\mathbb{N}\mathcal{B}]={\rm
Er}(\mathcal{Q})$. Let $\alpha$ be an integral vector for
which the left hand side 
of Eq.~\eqref{irp-eq2} is finite. By replacing $\alpha$ by its
positive part $\alpha_+$, we may assume that
$\alpha\in\mathbb{N}^s\setminus\{0\}$.
In general one has
\begin{equation}\label{jun30-22-1}
\lceil|\lambda|\rceil=\lceil{\rm min}\{\langle y,{1}\rangle \vert\, 
y\geq 0;\, Ay\geq \alpha \}\rceil
\leq {\rm min}\{\langle y,{1}\rangle \mid  
y\in\mathbb{N}^q;\, Ay\geq \alpha\},
\end{equation}
where $\lambda=(\lambda_1,\ldots,\lambda_q)\in\mathbb{Q}_+^q$ and
$A\lambda\geq\alpha$. Then
$$
\frac{\alpha}{\lceil|\lambda|\rceil}\leq\frac{\alpha}{|\lambda|}
\leq\sum_{i=1}^q\frac{\lambda_i}{|\lambda|}v_i
$$
and, by Lemma~\ref{lemma-a<=b}, we obtain
that $\alpha/\lceil|\lambda|\rceil\in\mathcal{Q}$, that is,
$t^{\alpha}z^{\lceil|\lambda|\rceil}\in{\rm
Er}(\mathcal{Q})=K[\mathbb{N}\mathcal{B}]$. Therefore, there are
nonnegative integers $\tau_1,n_i,m_j\in\mathbb{N}$ such that
\begin{align}
t^{\alpha}z^{\lceil|\lambda|\rceil}&=z^{\tau_1}(t_1z)^{n_1}
\cdots(t_sz)^{n_s}(t^{v_1}z)^{m_1}\cdots(t^{v_q}z)^{m_q},\\
\lceil|\lambda|\rceil&=\tau_1+n_1+\cdots+n_s+m_1+\cdots+m_q,\label{jun30-22-2}\\
\alpha&=n_1e_1+\cdots+n_se_s+m_1v_1+\cdots+m_qv_q.\label{jun30-22-5}
\end{align}
\quad For each $e_i$ there is $v_{j_i}\in\{v_1,\ldots,v_q\}$
satisfying $e_i\leq v_{j_i}$. Then, by Eq.~\eqref{jun30-22-5}, $\alpha\leq Aw$ for some
$w\in\mathbb{N}^q$ with $|w|=\sum_{i=1}^sn_i+\sum_{i=1}^qm_i$. From
Eqs.~\eqref{jun30-22-1} and \eqref{jun30-22-2}, we get
$$
|w|\leq\lceil|\lambda|\rceil=\lceil{\rm min}\{\langle y,{1}\rangle
\mid y\geq 0;\, Ay\geq \alpha \}\rceil
\leq {\rm min}\{\langle y,{1}\rangle \mid y\in\mathbb{N}^q;\, Ay\geq \alpha\}\leq |w|,
$$ 
and we have equality everywhere. Thus, the 
system $x\geq 0;\, xA\leq 1$ has the integer
rounding property and the proof of this implication is complete.

$\Leftarrow$) Assume that the linear system $x\geq 0;\, xA\leq 1$ has the integer
rounding property. The inclusion $K[\mathbb{N}\mathcal{B}]\subset{\rm
Er}(\mathcal{Q})$ is clear because
$\overline{K[\mathbb{N}\mathcal{B}]}={\rm Er}(\mathcal{Q})$. To show the reverse inclusion take
$t^\alpha z^b\in{\rm Er}(\mathcal{Q})$, that is,
$\alpha\in b\mathcal{Q}$, $\alpha\in\mathbb{N}^s$,
$b\in\mathbb{N}_+$.
Then, 
$$\alpha/b=\sum_{i=1}^s\mu_ie_i+\sum_{i=1}^q\lambda_iv_i,
$$
where $\mu_i$, $\lambda_j$ are in $\mathbb{R}_+$, and
$\sum_{i=1}^s\mu_i+\sum_{i=1}^q\lambda_i\leq 1$. For any vector $x$ 
that satisfies $x\geq 0;\, xA\leq 1$, one has $\langle x,e_i
\rangle\leq 1$ for $i=1,\ldots,s$ because any $t_i$ occurs in at
least one of the
minimal generators of $I$. Hence, for any such $x$, we obtain
$$
\langle\alpha/b,x \rangle=\sum_{i=1}^s\mu_i\langle e_i,x \rangle+
\sum_{i=1}^q\lambda_i\langle v_i,x \rangle\leq\sum_{i=1}^s\mu_i+
\sum_{i=1}^q\lambda_i\leq 1. 
$$
\quad Thus, $\langle\alpha,x \rangle\leq b$ and, by linear
programming duality \cite[Corollary~7.1g]{Schr}, one has
\begin{equation}\label{jun30-22-3}
b\geq\max\{\langle\alpha, x\rangle\mid 
x\geq 0;\, xA\leq 1 \}=\min\{\langle y,{1}\rangle\mid y\geq 0;\,Ay\geq
\alpha\},
\end{equation}
and since system $x\geq 0;\, xA\leq 1$ has the integer rounding
property, we get 
\begin{equation}\label{jun30-22-4}
b\geq \lceil{\rm min}\{\langle y,{1}\rangle \mid y\geq 0;\, Ay\geq \alpha \}\rceil
={\rm min}\{\langle y,{1}\rangle \mid y\in\mathbb{N}^q;\, Ay\geq \alpha\}.
\end{equation}
\quad Hence, we can choose $m\in\mathbb{N}^q$ such that $b\geq|m|$ and
$Am\geq\alpha$. Setting $k=|m|$, by Lemma~\ref{lemma-a<=b}, there are
$\gamma_1,\ldots,\gamma_\ell$ in $\{e_i\}_{i=1}^s\cup\{v_i\}_{i=1}^q$,
$|m|\geq\ell$, such that $\alpha=\sum_{i=1}^\ell\gamma_i$. Thus,
$$
t^\alpha z^b=(t^{\gamma_1}z)\cdots(t^{\gamma_\ell}z)z^{b-\ell},
$$
and consequently $t^\alpha z^b\in K[\mathbb{N}\mathcal{B}]$. 
\end{proof}

\begin{corollary}\label{ehrhart-corollary}
Let $I$ be an ideal of $S$ generated by monomials of degree $2$. Then,
the following conditions are
equivalent. 
$$
\mathrm{(a)}\ K[\mathbb{N}\mathcal{B}]={\rm Er}(\mathcal{Q});
\quad \mathrm{(b)}\ K[\mathbb{N}\mathcal{B}]\text{ is normal };\quad 
\mathrm{(c)}\ \mathcal{B}\text{ is a Hilbert basis}.
$$
\end{corollary}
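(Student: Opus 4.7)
The plan is to deduce all three equivalences from the structural results already in hand, chiefly Theorem~\ref{ehrhart-q} (which identifies $\overline{K[\mathbb{N}\mathcal{B}]}$ with $\mathrm{Er}(\mathcal{Q})$) and Theorem~\ref{int-clos-gen} (which describes the integral closure of a semigroup ring in terms of lattice points of its cone).

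First I would settle the equivalence (a)$\Leftrightarrow$(b). By Theorem~\ref{ehrhart-q}, $\overline{K[\mathbb{N}\mathcal{B}]} = \mathrm{Er}(\mathcal{Q})$ unconditionally. By definition, a domain is normal precisely when it equals its integral closure, so $K[\mathbb{N}\mathcal{B}]$ is normal if and only if $K[\mathbb{N}\mathcal{B}] = \mathrm{Er}(\mathcal{Q})$. This is immediate and requires no additional work.

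Next I would handle (b)$\Leftrightarrow$(c). The key observation is that $\mathbb{Z}\mathcal{B} = \mathbb{Z}^{s+1}$: indeed $e_{s+1} \in \mathcal{B}$, and for each $i$ the vector $e_i + e_{s+1}$ lies in $\mathcal{B}$, so $e_i = (e_i + e_{s+1}) - e_{s+1} \in \mathbb{Z}\mathcal{B}$. Theorem~\ref{int-clos-gen}(b) then gives
\[
\overline{K[\mathbb{N}\mathcal{B}]} = K[\mathbb{Z}\mathcal{B} \cap \mathbb{R}_+\mathcal{B}] = K[\mathbb{Z}^{s+1} \cap \mathbb{R}_+\mathcal{B}].
\]
Therefore $K[\mathbb{N}\mathcal{B}]$ is normal if and only if $\mathbb{N}\mathcal{B} = \mathbb{Z}^{s+1}\cap \mathbb{R}_+\mathcal{B}$, which is exactly the definition of $\mathcal{B}$ being a Hilbert basis. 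This closes the loop.

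There is essentially no obstacle here: the corollary is a formal consequence of Theorem~\ref{ehrhart-q} combined with the semigroup-ring description of integral closure in Theorem~\ref{int-clos-gen}, once one notes that the lattice spanned by $\mathcal{B}$ is all of $\mathbb{Z}^{s+1}$. The only subtlety worth flagging explicitly is this last point about $\mathbb{Z}\mathcal{B} = \mathbb{Z}^{s+1}$, which is what allows the Hilbert basis condition (stated in terms of the ambient lattice $\mathbb{Z}^{s+1}$) to coincide with the semigroup-normality condition $\mathbb{N}\mathcal{B} = \mathbb{Z}\mathcal{B}\cap \mathbb{R}_+\mathcal{B}$ that governs normality of $K[\mathbb{N}\mathcal{B}]$.
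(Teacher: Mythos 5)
Your proof is correct and takes essentially the same route as the paper's: both arguments rest on Theorem~\ref{ehrhart-q}, Theorem~\ref{int-clos-gen}, and the observation that $\mathbb{Z}\mathcal{B}=\mathbb{Z}^{s+1}$, which makes the Hilbert basis condition coincide with $\mathbb{N}\mathcal{B}=\mathbb{Z}^{s+1}\cap\mathbb{R}_+\mathcal{B}$. The only (harmless) difference is organizational: the paper argues in a cycle and proves (a)$\Rightarrow$(b) by citing that Ehrhart rings of lattice polytopes are normal, whereas you get (a)$\Leftrightarrow$(b) directly from the unconditional equality $\overline{K[\mathbb{N}\mathcal{B}]}={\rm Er}(\mathcal{Q})$, thereby avoiding that extra external fact.
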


\begin{proof} (a)$\Rightarrow$(b) This follows from the fact that 
the Ehrhart ring of a lattice polytope is a normal
domain \cite[Theorem~9.3.6]{monalg-rev}.

(b)$\Rightarrow$(c) Noticing that $\mathbb{Z}\mathcal{B}$ is
$\mathbb{Z}^{s+1}$, by the description of the integral closure of
$K[\mathbb{N}\mathcal{B}]$ given in Theorem~\ref{int-clos-gen}, 
one has 
$$K[\mathbb{N}\mathcal{B}]=\overline{K[\mathbb{N}\mathcal{B}]}=K[
\mathbb{Z}\mathcal{B}\cap \mathbb{R}_+{\mathcal
B}]=K[\mathbb{Z}^{s+1}\cap \mathbb{R}_+{\mathcal B}].
$$
\quad Thus, $\mathbb{N}\mathcal{B}=\mathbb{Z}^{s+1}\cap
\mathbb{R}_+{\mathcal B}$ and $\mathcal{B}$ is a Hilbert basis.

(c)$\Rightarrow$(a) By Theorem~\ref{ehrhart-q}, we get
$\overline{K[\mathbb{N}\mathcal{B}]}={\rm Er}(\mathcal{Q})$. Hence,
using that $\mathcal{B}$ is a Hilbert basis and the description of the integral closure of
$K[\mathbb{N}\mathcal{B}]$ given in Theorem~\ref{int-clos-gen},
one has
$$\overline{K[\mathbb{N}\mathcal{B}]}=K[
\mathbb{Z}\mathcal{B}\cap \mathbb{R}_+{\mathcal
B}]=K[\mathbb{Z}^{s+1}\cap \mathbb{R}_+{\mathcal
B}]=K[\mathbb{N}\mathcal{B}].
$$
\quad Thus, ${K[\mathbb{N}\mathcal{B}]}={\rm Er}(\mathcal{Q})$ and the
proof is complete.
\end{proof}

\begin{corollary}\label{jul4-22} Let $I$ be the edge ideal of a graph
and let $I^*$ be the dual of $I$.  
Then, $I^*$ is normal if and only if
$\mathcal{B}$ is a Hilbert basis.
\end{corollary}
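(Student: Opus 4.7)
The plan is to string together four results already established in the paper, using that the edge ideal $I$ of a graph is generated by squarefree monomials of degree $2$, so $A$ is a $\{0,1\}$-matrix and all of Section~\ref{normality-degree2} applies.

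First I would rewrite the Hilbert basis condition in terms of integer rounding. By Corollary~\ref{ehrhart-corollary}, $\mathcal{B}$ is a Hilbert basis if and only if $K[\mathbb{N}\mathcal{B}]={\rm Er}(\mathcal{Q})$, and by Theorem~\ref{ehrhart-q} this latter equality holds if and only if the system $x\geq 0;\, xA\leq 1$ has the integer rounding property.

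Next I would bring in the dual. Because $A$ has $\{0,1\}$ entries, the columns $\mathbf{1}-v_i$ of the complementary matrix $A^*$ (entry-wise $a^*_{i,j}=1-a_{i,j}$) are precisely the exponent vectors of the generators $t_1\cdots t_s/t_e$ of $I^*$, so $A^*$ is the incidence matrix of $I^*$. Theorem~\ref{duality-irp} says $x\geq 0;\, xA\geq 1$ has the rounding property iff $x\geq 0;\, xA^*\leq 1$ does; applying this duality to the matrix $A^*$ (and using $(A^*)^*=A$) gives the symmetric statement that $x\geq 0;\, xA\leq 1$ has the rounding property if and only if $x\geq 0;\, xA^*\geq 1$ does. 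Finally, Corollary~\ref{crit-rounding-normali} applied to the monomial ideal $I^*$ (whose incidence matrix is $A^*$) says that $I^*$ is normal if and only if $x\geq 0;\, xA^*\geq 1$ has the integer rounding property.

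Concatenating the four equivalences $\mathcal{B}$ Hilbert basis $\Leftrightarrow K[\mathbb{N}\mathcal{B}]={\rm Er}(\mathcal{Q})\Leftrightarrow xA\leq 1$ has IRP $\Leftrightarrow xA^*\geq 1$ has IRP $\Leftrightarrow I^*$ normal, yields the corollary. There is no real obstacle here, since every link is a direct citation; the only point requiring care is the identification of $A^*$ as the incidence matrix of $I^*$ (so that $I$ being the edge ideal of a graph, not merely a clutter, is used to guarantee that $A$ is $\{0,1\}$) and the symmetric use of Theorem~\ref{duality-irp} via $(A^*)^*=A$.
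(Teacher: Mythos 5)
Your proposal is correct and follows essentially the same route as the paper: the paper's proof chains exactly the same four results (Corollary~\ref{crit-rounding-normali} applied to $I^*$, Theorem~\ref{duality-irp}, Theorem~\ref{ehrhart-q}, and Corollary~\ref{ehrhart-corollary}), just stated in the reverse order. Your added remarks identifying $A^*$ as the incidence matrix of $I^*$ and noting the symmetric use of the duality via $(A^*)^*=A$ are details the paper leaves implicit, but they do not change the argument.
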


\begin{proof} Let $A$ be the incidence matrix of $I$. By
Corollary~\ref{crit-rounding-normali}, $I^*$ is normal if and only if
the system $xA^*\geq 1;x\geq 0$ has the integer rounding property.
Then, by Theorem~\ref{duality-irp}, $I^*$ is normal if and only if the
system $xA\leq 1;x\geq 0$ has the integer rounding property. Hence, by
Theorem~\ref{ehrhart-q}, $I^*$ is normal if and only if 
${K[\mathbb{N}\mathcal{B}]}={\rm Er}(\mathcal{Q})$. Thus, by
Corollary~\ref{ehrhart-corollary}, $I^*$ is normal if and only if
$\mathcal{B}$ is a Hilbert basis.
\end{proof}

\begin{proposition}\label{duality-i-i*} Let $I$ be the edge ideal of a graph and let
$I^*$ be the dual of $I$. Then, $I^*$ is normal if and only if $I$ is
normal.
\end{proposition}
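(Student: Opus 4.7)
The strategy is to recognize the statement as an immediate consequence of two results proved earlier in Section~\ref{normality-degree2}. Specifically, both conditions---normality of $I$ and normality of $I^*$---will be shown to be equivalent to a single combinatorial condition, namely that the set $\mathcal{B}$ defined in Eq.~\eqref{jul20-22} is a Hilbert basis. Note that $\mathcal{B}$ is built from the exponent vectors $v_1,\ldots,v_q$ of the generators of $I$ (not of $I^*$), so it is the \emph{same} set in both equivalences, which is what makes the argument work.

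First I would invoke Theorem~\ref{normal-b}. Since $I=I(G)$ is the edge ideal of a graph, it is generated by squarefree monomials of degree $2$, so the hypothesis is satisfied and the theorem gives: $I$ is normal if and only if $\mathcal{B}$ is a Hilbert basis.

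Next I would apply Corollary~\ref{jul4-22}, which is stated precisely for this setting (edge ideal of a graph together with its dual): $I^*$ is normal if and only if $\mathcal{B}$ is a Hilbert basis. Chaining the two equivalences yields
\[
I^*\text{ normal}\iff \mathcal{B}\text{ is a Hilbert basis}\iff I\text{ normal},
\]
which is the desired statement.

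There is no real obstacle here because all the substantive work has already been done: the nontrivial content of Theorem~\ref{normal-b} (characterizing normality of a degree-$2$ monomial ideal via $\mathcal{B}$) and of Corollary~\ref{jul4-22} (which itself combined Corollary~\ref{crit-rounding-normali}, the duality Theorem~\ref{duality-irp}, and Theorem~\ref{ehrhart-q}). The proposition is a formal corollary obtained by composing these two equivalences through the common intermediate statement about $\mathcal{B}$.
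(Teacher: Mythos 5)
Your proposal is correct and is exactly the paper's argument: the paper also proves this proposition by combining Theorem~\ref{normal-b} ($I$ normal $\iff$ $\mathcal{B}$ is a Hilbert basis) with Corollary~\ref{jul4-22} ($I^*$ normal $\iff$ $\mathcal{B}$ is a Hilbert basis). Your observation that $\mathcal{B}$ is the same set in both equivalences is precisely what makes the composition valid.
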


\begin{proof} This follows from Theorem~\ref{normal-b} and
Corollary~\ref{jul4-22}.
\end{proof}

\section{Normality of ideals of covers of
graphs}\label{normality-icdual}

In this section, we study the normality of the ideal
of covers $I_c(G)$ of a graph $G$ and give a combinatorial criterion
in terms of Hochster configurations for the normality of $I_c(G)$
when the independence number of $G$ is at most 
two.

\begin{lemma}\label{may13-22}
Let $v$ be a non isolated vertex of a graph $G$. If the neighbor set $N_G(v)$ of
$v$ is a minimal 
vertex cover of $G$, then a set $C\subset V(G)$ is a
minimal vertex cover of $G$ if and only if $C=N_G(v)$ or $C=\{v\}\cup
D$ with $D$ a minimal vertex cover of $G\setminus v$ such that 
$N_G(v)\not\subset D$.
\end{lemma}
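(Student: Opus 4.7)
The plan is to prove both directions by a case analysis on whether $v\in C$. The key combinatorial fact driving everything is that if $v\notin C$, then every edge at $v$ must be covered at its other endpoint, forcing $N_G(v)\subseteq C$. Combined with the hypothesis that $N_G(v)$ is itself a (minimal) vertex cover, this pins down $C$ exactly.

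For the forward direction, suppose $C$ is a minimal vertex cover of $G$. If $v\notin C$, I would argue $N_G(v)\subseteq C$ from the edge-covering condition, then use that $N_G(v)$ is already a vertex cover and $C$ is minimal to conclude $C=N_G(v)$. If $v\in C$, set $D:=C\setminus\{v\}$. I would first check $D$ covers $G\setminus v$ (any edge of $G\setminus v$ is an edge of $G$ not containing $v$, hence covered by $C$ at a vertex other than $v$). Minimality of $D$ follows by contradiction: a proper vertex cover $D'\subsetneq D$ of $G\setminus v$ would yield $\{v\}\cup D'\subsetneq C$ as a vertex cover of $G$. Finally, $N_G(v)\not\subseteq D$ because otherwise $N_G(v)\subsetneq C$ (using $v\notin N_G(v)$) would be a smaller vertex cover, contradicting minimality of $C$.

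For the reverse direction, the case $C=N_G(v)$ is immediate from the hypothesis. For $C=\{v\}\cup D$, covering is clear ($v$ handles edges at $v$, $D$ handles the rest). For minimality I would again argue by contradiction: given $C'\subsetneq C$ a vertex cover of $G$, either $v\notin C'$, in which case $N_G(v)\subseteq C'\subseteq D$ contradicts $N_G(v)\not\subseteq D$, or $v\in C'$, in which case $C'\setminus\{v\}\subsetneq D$ is a vertex cover of $G\setminus v$, contradicting minimality of $D$.

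There is no significant obstacle here; the proof is a short case analysis. The only delicate point is not to confuse inclusions: one must carefully use both the assumption that $N_G(v)$ is a vertex cover (to force $C=N_G(v)$ when $v\notin C$) and that it is a \emph{minimal} one is actually never used for the characterization itself, only the vertex-cover property. The hypothesis of minimality of $N_G(v)$ guarantees that $N_G(v)$ itself appears on the list of minimal vertex covers, which is what makes the statement clean.
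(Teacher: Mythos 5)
Your proof is correct and takes essentially the same route as the paper: the same case split on whether $v\in C$, the same decomposition $D=C\setminus\{v\}$, and the same use of the covering property of $N_G(v)$; the only cosmetic difference is that you certify minimality by ruling out proper sub-covers, while the paper exhibits, for each vertex of the cover, a witness edge covered only by that vertex --- equivalent formulations. Your closing remark is also sound: once $N_G(v)$ is a vertex cover it is automatically a minimal one, since for each $u\in N_G(v)$ the edge $\{v,u\}$ forces $u$ into any sub-cover not containing $v$.
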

\begin{proof} $\Rightarrow$) Assume that $C$ is a minimal vertex
cover of $G$. If $v\notin C$, then $N_G(v)\subset C$ and, by the
minimality of $C$, one has the equality $C=N_G(v)$. Now assume that  $v\in C$ and
set $D=C\setminus\{v\}$. If $N_G(v)\subset D$, we get that $D$ is a
vertex cover of $G$ with $D\subsetneq C$, a contradiction. Thus,
$N_G(v)\not\subset D$ and the proof reduces to showing that $D$ is a
minimal vertex cover of $H=G\setminus v$. Take $f\in E(H)$, then
$v\notin f$ and $f\cap
C\neq\emptyset$. Thus, $f\cap D\neq\emptyset$, and $D$ is a vertex
cover of $H$. To show that $D$ is minimal take $t_k\in D$. Then
$t_k\neq v$ and, by
the minimality of $C$, there is $e\in E(G)$ such that
$e\cap(C\setminus\{t_k\})=\emptyset$. Then, $v\notin e$, $e\in E(H)$,
and $e\cap(D\setminus\{t_k\})=\emptyset$.

$\Leftarrow$) Assume that $C=\{v\}\cup D$ with $D$ a minimal vertex
cover of $H=G\setminus v$ such that 
$N_G(v)\not\subset D$. Take $e\in E(G)$. If $v\in e$, then, $e\cap
C\neq\emptyset$ and if $v\notin e$, then $e\cap D\neq\emptyset$. Thus,
$C$ is a vertex cover of $G$. Next we show that $C$ is minimal. As $N_G(v)\not\subset D$, it follows that 
$D=C\setminus\{v\}$ is not a
vertex cover of $G$. Indeed, pick $t_k\in N_G(v)\setminus D$, then
$e=\{v,t_k\}\in E(G)$ and $e\cap(C\setminus\{v\})=\emptyset$. 
Now take $t_i\in C$, $t_i\neq v$. Then, there is
$f\in E(H)$ such that $f\cap(D\setminus\{t_i\})=\emptyset$ because $D$
is a minimal vertex cover of $H$. Then,
$f\cap(C\setminus\{t_i\})=\emptyset$.
\end{proof}

\begin{theorem}\label{may14-22} Let $v$ be a vertex of a graph $G$. If
the neighbor set $N_G(v)$ of $v$ is a minimal vertex cover of $G$,
then $I_c(G\setminus v)$ is normal if and only if $I_c(G)$ is normal. 
\end{theorem}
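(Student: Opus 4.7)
The forward direction will be a one-line appeal to Proposition~\ref{may21-22}: since $H:=G\setminus v$ is a minor of $G$ (obtained by a single deletion), normality of $I_c(G)$ forces that of $I_c(H)$.

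For the converse the plan is to combine Lemma~\ref{may13-22}'s description of the minimal vertex covers of $G$ with a projection/lifting trick via the $K$-algebra map $\phi\colon S\to K[t_1,\ldots,t_{s-1}]$ sending $t_s\mapsto 1$ and fixing the other variables. Set $v=t_s$ and $N=N_G(v)$. By Lemma~\ref{may13-22}, $\mathcal{G}(I_c(G))=\{t^N\}\cup\{t_s\,t^D\mid D\in\mathcal{F}_2\}$, where $\mathcal{F}_2$ is the family of minimal vertex covers $D$ of $H$ with $N\not\subset D$; writing $\mathcal{F}_1$ for those minimal vertex covers of $H$ with $N\subset D$, every minimal vertex cover of $H$ lies in $\mathcal{F}_1\cup\mathcal{F}_2$, and $t^D$ is divisible by $t^N$ whenever $D\in\mathcal{F}_1$. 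A short check (using that $N$ is itself a vertex cover of $H$, so $t^N\in I_c(H)$) shows $\phi(I_c(G))=I_c(H)$, which will allow me to transport divisibility statements in $I_c(G)$ to ones in $I_c(H)$.

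Now I would take $t^a\in\overline{I_c(G)^n}$ and, via Lemma~\ref{icd}, fix $k\geq 1$ together with a decomposition
$$t^{ka}=(t^N)^r\prod_{j=1}^{kn-r}(t_s t^{D_j})\,t^\delta,\qquad D_j\in\mathcal{F}_2,\ \delta\in\mathbb{N}^s.$$
Setting $c:=\max(0,n-a_s)$, comparing the $t_s$-exponents gives $r\geq kc$, and then for each $i\in N$ comparing $t_i$-exponents gives $a_i\geq c$. Consequently $\tilde a:=a-c\sum_{t_i\in N}e_i$ lies in $\mathbb{N}^s$ with $\tilde a_s=a_s$ and $t^a=(t^N)^c t^{\tilde a}$. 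Dividing the displayed decomposition by $(t^N)^{kc}$ exhibits $t^{k\tilde a}$ as a product of exactly $k(n-c)$ generators of $I_c(G)$ times $t^\delta$, so $t^{k\tilde a}\in I_c(G)^{k(n-c)}$; applying $\phi$ and the assumed normality of $I_c(H)$ then yields $t^{\tilde a'}\in I_c(H)^{n-c}$, where $\tilde a'=(\tilde a_1,\ldots,\tilde a_{s-1})$. Writing $t^{\tilde a'}=t^{u_1}\cdots t^{u_{n-c}}t^{\epsilon'}$ with each $u_i$ a minimal vertex cover of $H$, and letting $m$ count the indices $i$ with $u_i\in\mathcal{F}_2$, the inequality $n-c=\min(n,a_s)\leq a_s$ forces $m\leq a_s$, and so
$$t^{\tilde a}=t_s^{a_s-m}\prod_{u_i\in\mathcal{F}_2}(t_s t^{u_i})\prod_{u_i\in\mathcal{F}_1}t^{u_i}\,t^{\epsilon'}\in I_c(G)^{n-c},$$
each $\mathcal{F}_2$-factor yielding a generator $t_s t^{u_i}$ of $I_c(G)$ and each $\mathcal{F}_1$-factor being divisible by $t^N\in\mathcal{G}(I_c(G))$. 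Finally $t^a=(t^N)^c t^{\tilde a}\in I_c(G)^n$.

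The hard step will be controlling the count $m$ of $\mathcal{F}_2$-factors in an arbitrary decomposition of $t^{\tilde a'}\in I_c(H)^{n-c}$: such decompositions are far from unique, and the only automatic bound is $m\leq n-c$. Peeling off $(t^N)^c$ at the outset is precisely what forces $n-c\leq a_s$, which in turn makes $m\leq a_s$ and permits the lift back to $I_c(G)$. The hypothesis that $N$ itself be a minimal vertex cover of $G$ is used exactly at this step, since it is what places $t^N$ in $\mathcal{G}(I_c(G))$ and legitimizes the peeling.
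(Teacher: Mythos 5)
Your proof is correct, and the hard direction is organized genuinely differently from the paper's. The paper also starts from Lemma~\ref{may13-22} and the same exponent comparison in a decomposition $(t^a)^k=(t^N)^r\prod_j(t_st^{D_j})t^\delta$, but it first transfers the whole power to $H=G\setminus v$ (getting $t^a\in\overline{I_c(H)^n}=I_c(H)^n$) and then argues by induction on $n$ in two cases: when $n\le a_s$ it lifts back directly by attaching $t_s$ to each cover of $H$, and when $n>a_s$ it peels off $(t^N)^{n-a_s}$, obtains $t^\epsilon\in\overline{I_c(G)^{a_s}}$, and invokes the induction hypothesis for the smaller power of $I_c(G)$ itself. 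You instead peel off $(t^N)^{c}$ with $c=\max(0,n-a_s)$ already at the level of the $k$-th power, transfer via the specialization $t_s\mapsto 1$ to get $t^{\tilde a'}\in\overline{I_c(H)^{\,n-c}}=I_c(H)^{\,n-c}$, and lift back using $m\le n-c=\min(n,a_s)\le a_s$, distinguishing covers in $\mathcal{F}_2$ (which receive a $t_s$ and are generators of $I_c(G)$ by Lemma~\ref{may13-22}) from those in $\mathcal{F}_1$ (which are divisible by $t^N$). This merges the paper's two cases into one uniform argument and replaces the induction on $n$ by a single application of the normality of $I_c(H)$, while using the hypothesis on $N_G(v)$ at exactly the same two points as the paper (it puts $t^N$ in $\mathcal{G}(I_c(G))$ and it bounds the number of $t_s$'s needed in the lift). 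The only thing to add is the degenerate case: Lemma~\ref{may13-22} assumes $v$ is not isolated, so, as the paper does with its ``we may assume,'' note that if $v$ is isolated the hypothesis forces $G$ to be edgeless and both ideals are trivially normal. Your easy direction via Proposition~\ref{may21-22} coincides with the paper's.
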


\begin{proof} Setting $V(G)=\{t_1,\ldots,t_s\}$ and $H=G\setminus v$,
we may assume that $v$ is not an isolated vertex, $v=t_s$, and
$N_G(t_s)=\{t_1,\ldots,t_r\}$.

$\Rightarrow$) Assume that $I_c(H)$ is a normal ideal. To prove that $I_c(G)$
is normal, we will show that $I_c(G)^n=\overline{I_c(G)^n}$ 
for all $n\geq 1$. We argue by induction on $n$. The case $n=1$ is clear
because $I_c(G)$ is squarefree \cite[p.~153]{monalg-rev}. Assume that
$n>1$. The inclusion $I_c(G)^n\subset \overline{I_c(G)^n}$ holds in
general. To show the reverse inclusion take
$t^a\in\overline{I_c(G)^n}$. Then, by Lemma~\ref{icd}, there is
$k\in\mathbb{N}_+$ such that
$(t^a)^k\in I_c(G)^{kn}$ and we can write
\begin{equation*}
(t^a)^k=t^{b_1}\cdots t^{b_{nk}}t^\delta,
\end{equation*}
where $t^{b_1},\ldots, t^{b_{nk}}$ are minimal generators of $I_c(G)$.
Then, by Lemma~\ref{may13-22}, we get 
\begin{equation}\label{may16-22-2}
(t^a)^k=(t_1\cdots t_r)^m(t_st^{d_{m+1}})\cdots (t_st^{d_{nk}})t^\delta,
\end{equation}
where $t^{d_{m+1}},\ldots,t^{d_{nk}}$ are minimal generators of
$I_c(H)$. As $\{t_1,\ldots,t_r\}$ contains a minimal vertex cover of
$H$, one has $(t^a)^k\in I_c(H)^{nk}$, and consequently 
$t^a\in\overline{I_c(H)^n}$. By the normality of $I_c(H)$,
we obtain that $t^a\in I_c(H)^n$. Then
\begin{equation}\label{may16-22-3}
t^a=t^{c_1}\cdots t^{c_{n}}t^\gamma,
\end{equation}
where $t^{c_1},\ldots, t^{c_{n}}$ are minimal generators of $I_c(H)$
and $t^\gamma \in S=K[t_1,\ldots,t_s]$.

Case (I) $n\leq a_s$, $a=(a_1,\ldots,a_s)$. By
Eq.~\eqref{may16-22-3}, $t_s^{a_s}$ divides $t^\gamma$, and we get
\begin{equation*}
t^a=(t_st^{c_1})\cdots(t_st^{c_{n}})(t^\gamma/t_s^{a_s})t^{a_s-n}.
\end{equation*}
\quad Then, $t^{a}\in I_c(G)^n$ because $t_st^{c_i}\in I_c(G)$ for
$i=1,\ldots,n$.

Case (II) $n>a_s$. By Eq.~\eqref{may16-22-2}, one has $ka_s\geq kn-m$
and $ka_i\geq m$ for $i=1,\ldots,r$. Therefore, $m\geq k(n-a_s)$ and
$a_i\geq n-a_s$ for $i=1,\ldots,r$. Thus, we can write
\begin{equation}\label{may16-22-4}
t^a=(t_1\cdots t_r)^{n-a_s}t^\epsilon.
\end{equation}
\quad Using Eqs.~\eqref{may16-22-2} and \eqref{may16-22-4}, we obtain
$$
(t^a)^k=(t_1\cdots t_r)^m(t_st^{d_{m+1}})\cdots
(t_st^{d_{nk}})t^\delta=(t_1\cdots t_r)^{k(n-a_s)}(t^\epsilon)^k,
$$
and consequently 
$$(t_1\cdots t_r)^{m-k(n-a_s)}(t_st^{d_{m+1}})\cdots
(t_st^{d_{nk}})t^\delta=(t^\epsilon)^k.
$$ 
\quad As $m-k(n-a_s)+kn-m=ka_s$, we obtain that $(t^\epsilon)^k\in
I_c(G)^{ka_s}$, that is, $t^\epsilon\in\overline{I_c(G)^{a_s}}$. By
induction ${I_c(G)^{a_s}}$ is equal to $\overline{I_c(G)^{a_s}}$. Thus
$t^\epsilon\in{I_c(G)^{a_s}}$ and, by
Eq.~\eqref{may16-22-4}, we get $t^a\in I_c(G)^n$. Hence,
${I_c(G)^{n}}$ is equal to 
$\overline{I_c(G)^{n}}$ and the proof is
complete.

$\Leftarrow$) This implication follows at once from
Proposition~\ref{may21-22}.
\end{proof}

Let $G$ be a graph. The \textit{cone} over $G$ with apex $v$, denoted
$C(G)$, is obtained
 by adding a new vertex $v$ to $G$ and joining every vertex of $G$ to
 $v$. 

\begin{corollary}\cite[Theorem~1.6]{Al-Ayyoub-Nasernejad-cover-ideals}\label{aug19-22}
Let $G$ be a graph and let $C(G)$ the cone over $G$ with apex
$v$. Then, $I_c(G)$ is normal if and only if $I_c(C(G))$ is normal. 
\end{corollary}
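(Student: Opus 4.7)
The plan is to apply Theorem~\ref{may14-22} directly, taking $C(G)$ as the ambient graph and the apex $v$ as the distinguished vertex. The entire content of the corollary is a check that the hypotheses of Theorem~\ref{may14-22} are satisfied by this pair, together with the observation that the deletion $C(G)\setminus v$ is canonically identified with $G$.

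First I would verify that $N_{C(G)}(v)$ is a minimal vertex cover of $C(G)$. By the definition of the cone, $v$ is joined to every vertex of $G$, so $N_{C(G)}(v)=V(G)$. The set $V(G)$ covers $C(G)$: every edge of $G$ has both of its endpoints in $V(G)$, and every cone edge $\{v,u\}$ is covered through $u\in V(G)$. For minimality, if $u\in V(G)$ is removed, then the cone edge $\{v,u\}$ is no longer covered, so no proper subset of $V(G)$ is a vertex cover of $C(G)$.

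Next I would identify $C(G)\setminus v$ with $G$ using the deletion definition from Section~\ref{section-intro}: its vertex set is $V(C(G))\setminus\{v\}=V(G)$, and its edges are $\{e\in E(C(G))\mid v\notin e\}=E(G)$. With this identification, Theorem~\ref{may14-22} applied to $C(G)$ and the vertex $v$ yields that $I_c(C(G)\setminus v)=I_c(G)$ is normal if and only if $I_c(C(G))$ is normal, which is the desired equivalence.

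There is no genuine obstacle; the corollary is a direct specialization of Theorem~\ref{may14-22}. The only mild point to be careful about is ensuring that $v$ is non-isolated in $C(G)$, which holds as soon as $V(G)\neq\emptyset$; the degenerate case $V(G)=\emptyset$ trivializes since then both $I_c(G)$ and $I_c(C(G))$ are the zero (or unit) ideal.
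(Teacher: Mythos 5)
Your proposal is correct and follows exactly the paper's argument: apply Theorem~\ref{may14-22} to $C(G)$ with distinguished vertex $v$, noting that $N_{C(G)}(v)=V(G)$ is a minimal vertex cover of $C(G)$ and that $C(G)\setminus v=G$. The extra verification of minimality and the degenerate-case remark are fine elaborations of the same proof.
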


\begin{proof} This follows readily from Theorem~\ref{may14-22}, by noticing
that $V(G)$ is a minimal vertex cover of $C(G)$, $N_{C(G)}(v)=V(G)$,
and $C(G)\setminus v=G$.
\end{proof}

\begin{corollary}\label{may20-22} Let $G$ be a graph and let
$G_1,\ldots,G_r$ be its connected components. Then
\begin{enumerate}
\item[(a)] $I_c(G)=I_c(G_1)\cdots I_c(G_r)$,
\item[(b)] $\overline{I_c(G)^n}=\overline{I_c(G_1)^n}\cdots
\overline{I_c(G_r)^n}$
for all $n\geq 1$, and 
\item[(c)] $I_c(G)$ is normal if and only if $I_c(G_i)$ is normal 
for $i=1,\ldots,r$.
\end{enumerate}
\end{corollary}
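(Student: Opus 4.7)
The plan is to reduce everything to Proposition~\ref{may19-22} by induction on the number of components, using the crucial fact that the vertex sets $V(G_1),\ldots,V(G_r)$ are pairwise disjoint, so the ideals $I_c(G_1),\ldots,I_c(G_r)$ are generated by monomials in disjoint sets of variables.

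First I would prove (a). The key combinatorial observation is that a subset $C\subset V(G)$ is a minimal vertex cover of $G$ if and only if $C=C_1\cup\cdots\cup C_r$ with each $C_i$ a minimal vertex cover of $G_i$: every edge of $G$ belongs to some $G_i$, so $C\cap V(G_i)$ must cover $G_i$, and minimality in $G$ forces minimality in each $G_i$ (since the $V(G_i)$ are pairwise disjoint). Translating this to generators gives
\[
\mathcal{G}(I_c(G))=\{t^{C_1}\cdots t^{C_r}\mid t^{C_i}\in\mathcal{G}(I_c(G_i)),\ i=1,\ldots,r\},
\]
which is precisely $\mathcal{G}(I_c(G_1)\cdots I_c(G_r))$ because monomials from different $I_c(G_i)$'s have disjoint support. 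Hence $I_c(G)=I_c(G_1)\cdots I_c(G_r)$.

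For (b) I would proceed by induction on $r$. The case $r=1$ is trivial. For the inductive step, set $J_1=I_c(G_1)\cdots I_c(G_{r-1})$ and $J_2=I_c(G_r)$; by (a) and the induction hypothesis these are ideals in disjoint sets of variables, so Proposition~\ref{may19-22}(c) yields
\[
\overline{I_c(G)^n}=\overline{(J_1J_2)^n}=\overline{J_1^n}\,\overline{J_2^n}
=\overline{I_c(G_1)^n}\cdots \overline{I_c(G_r)^n},
\]
where the last equality uses induction on the first factor (once more invoking Proposition~\ref{may19-22}(c), since $I_c(G_1),\ldots,I_c(G_{r-1})$ sit in pairwise disjoint sets of variables).

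Finally, (c) follows in the same inductive fashion from Proposition~\ref{may19-22}(d): with $J_1,J_2$ as above, $I_c(G)=J_1J_2$ is normal iff both $J_1$ and $J_2$ are normal, and by induction $J_1$ is normal iff $I_c(G_1),\ldots,I_c(G_{r-1})$ are all normal. The only real content is part (a) together with the disjointness of the variable sets; once those are in hand, parts (b) and (c) are a direct application of Proposition~\ref{may19-22}, so I do not expect any genuine obstacle beyond the bookkeeping of the induction.
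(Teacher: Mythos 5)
Your proof is correct and follows essentially the same route as the paper: part (a) via the observation that minimal vertex covers of $G$ are exactly unions of minimal vertex covers of the $G_i$, and parts (b) and (c) from Proposition~\ref{may19-22}. The only difference is that you spell out the induction on $r$ that the paper leaves implicit, which is fine since the proposition is stated for two ideals.
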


\begin{proof} To show part (a), let $C$ be a set of vertices of $G$. Note that $C$
is a minimal vertex cover of $G$ if and only if $C=C_1\cup\cdots\cup
C_r$ with $C_i$ a minimal vertex cover of $G_i$ for $i=1,\ldots,r$. 
Hence, $I_c(G)$ is equal to $I_c(G_1)\cdots I_c(G_r)$. 
Parts (b) and (c) follow from part (a) and Proposition~\ref{may19-22}.
\end{proof}

A {\it clique\/} of a graph $G$
is a set of vertices inducing a complete subgraph. The {\it clique
clutter\/} of $G$, denoted by ${\rm cl}(G)$, is the clutter on $V(G)$ whose edges are the 
maximal cliques of $G$ (maximal with respect to
inclusion). We also call a complete subgraph of $G$ 
a clique and denote a complete
subgraph of $G$ with $r$ vertices by $\mathcal{K}_r$. 
Then
$$
I({\rm cl}(G))^*=(\{(t_1\cdots t_s)/t_e\mid e\in E({\rm cl}(G))\}).
$$
\quad If $G$ is a discrete graph, by convention $I_c(G)=S$, 
$I(G)=(0)$, and $I(G)^*=(0)$. 

The \textit{complement} of a graph $G$, denoted $\overline{G}$, has the same
vertex set as $G$, 
and $\{t_i,t_j\}$ is an edge of
$\overline{G}$ if and only if $\{t_i,t_j\}$ is not an
edge of $G$.

\begin{lemma}\label{jul11-22} Let $\overline{G}$ be the complement of a graph $G$,
let 
${\rm cl}(\overline{G})$ be the clique
clutter of $\overline{G}$, and let ${\rm Isol}(\overline{G})$ be 
the set of isolated vertices of $\overline{G}$. The following hold.
\begin{enumerate}
\item[(a)] $I_c(G)=I({\rm cl}(\overline{G}))^*$.
\item[(b)] If $\overline{G}$ has no triangles, then 
$I_c(G)=(\{(t_1\cdots t_s)/t_i\mid t_i\in{\rm
Isol}(\overline{G})\})+I(\overline{G})^*$.
\item[(c)] If $\overline{G}$ is a discrete graph, then 
$I_c(G)=(\{(t_1\cdots t_s)/t_i\mid t_i\in V(G)\})$.
\item[(d)] If $\overline{G}$ has no triangles and no isolated
vertices, then $I_c(G)=I(\overline{G})^*$.
\end{enumerate}
\end{lemma}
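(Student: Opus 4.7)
The plan is to reduce everything to a single combinatorial bijection and then translate it into the language of monomial generators. The key fact is that a set $C \subset V(G)$ is a minimal vertex cover of $G$ if and only if $V(G) \setminus C$ is a maximal stable set of $G$, which is in turn equivalent to $V(G) \setminus C$ being a maximal clique of $\overline{G}$. This gives a bijection $C \mapsto V(G) \setminus C$ between the minimal vertex covers of $G$ and the edges of ${\rm cl}(\overline{G})$.

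For part (a), I would note that the minimal generators of $I_c(G)$ are the monomials $t_C = \prod_{t_i \in C} t_i$ indexed by the minimal vertex covers of $G$, whereas the minimal generators of $I({\rm cl}(\overline{G}))^*$ are $(t_1\cdots t_s)/t_e$ indexed by the maximal cliques $e$ of $\overline{G}$. Under the bijection above, $t_C = (t_1\cdots t_s)/t_{V(G)\setminus C}$, so the two generating sets coincide.

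Parts (b)--(d) are obtained by classifying the maximal cliques of $\overline{G}$ under the stated hypotheses and then invoking (a). If $\overline{G}$ has no triangles, every clique has at most two vertices, so the maximal cliques are exactly the isolated vertices of $\overline{G}$ together with the edges of $\overline{G}$. Substituting this into (a) splits the generating set of $I_c(G)$ into the two pieces $\{(t_1\cdots t_s)/t_i \mid t_i \in {\rm Isol}(\overline{G})\}$ and the generators of $I(\overline{G})^*$, proving (b). Part (c) is the special case of (b) in which every vertex is isolated (so the edge set of $\overline{G}$ is empty), and part (d) is the special case in which no vertex is isolated (so every maximal clique is an edge of $\overline{G}$).

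No step here is a genuine obstacle; the only thing that needs a moment's care is compatibility with the degenerate conventions recorded just before the lemma, namely $I_c(G)=S$ and $I(\overline{G})^*=(0)$ when $\overline{G}$ is discrete. One checks directly that the formulas in (b) and (c) reduce consistently in these cases, so the translation goes through uniformly.
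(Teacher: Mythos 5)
Your proposal is correct and follows essentially the same route as the paper: part (a) via the correspondence between minimal vertex covers of $G$ and (complements of) maximal cliques of $\overline{G}$, and parts (b)--(d) by observing that in a triangle-free $\overline{G}$ the maximal cliques are exactly the isolated vertices and the edges, then specializing. The paper phrases (a) as a two-way containment of generators rather than an explicit bijection, but the underlying argument is identical.
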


\begin{proof} (a) To show the inclusion ``$\subset$'' take $t^a$ a
minimal generator of $I_c(G)$, i.e., the support $U$
of $t^a$ is a minimal vertex cover of $G$, and consequently
$V(G)\setminus U$ is a maximal clique of $\overline{G}$. Thus, $U$ is the
complement of a maximal clique of $\overline{G}$, and consequently $t^a\in I({\rm
cl}(\overline{G}))^*$. The inclusion ``$\supset$'' is also easy to prove.

(b) As the graph $\overline{G}$ has no triangles, the edges of the
clique clutter ${\rm
cl}(\overline{G})$ are either isolated vertices of $\overline{G}$
(i.e., maximal cliques that
correspond to $\mathcal{K}_1$) or edges of $\overline{G}$ (i.e.,
maximal cliques that correspond to $\mathcal{K}_2$). Hence
$$
I({\rm cl}(\overline{G}))=({\rm
Isol}(\overline{G}))+I(\overline{G}),
$$
and using part (a) we obtain the equalities
$$
I_c(G)=I({\rm cl}(\overline{G}))^*=({\rm
Isol}(\overline{G}))^*+(I(\overline{G}))^*=(\{(t_1\cdots t_s)/t_i\mid t_i\in{\rm
Isol}(\overline{G})\})+I(\overline{G})^*.
$$
\quad Parts (c) and (d) follow from part (b).
\end{proof}

\begin{lemma}\label{jul14-22}
If $G$ is a graph and $U\subset V(\overline{G})$, then 
$$
\mbox{\rm(a) }\ \overline{G}\setminus{U}=\overline{G\setminus
U};\quad \mbox{\rm(b) }\ \overline{G}\setminus{\rm
Isol}(\overline{G})=\overline{G\setminus{\rm Isol}(\overline{G})}.
$$
\end{lemma}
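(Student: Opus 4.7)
The plan is to prove part (a) directly from the definitions of graph complement and vertex deletion, and then derive part (b) by specializing $U$ to ${\rm Isol}(\overline{G})$.

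For part (a), I would first observe that both sides share the same vertex set $V(G)\setminus U$, since deleting the vertices in $U$ from either $\overline{G}$ or $G$ leaves the same remaining vertex set, and taking the complement preserves the vertex set. Then the argument reduces to checking the edge sets coincide. Given any two-element subset $\{t_i,t_j\}$ of $V(G)\setminus U$, I would note the chain of equivalences: $\{t_i,t_j\}\in E(\overline{G}\setminus U)$ iff $\{t_i,t_j\}\in E(\overline{G})$ (since both endpoints are in $V(G)\setminus U$) iff $\{t_i,t_j\}\notin E(G)$ (by definition of complement) iff $\{t_i,t_j\}\notin E(G\setminus U)$ (since the endpoints avoid $U$, so deletion does not affect whether this pair is an edge of $G$) iff $\{t_i,t_j\}\in E(\overline{G\setminus U})$ (again by definition of complement, applied to the graph $G\setminus U$).

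Part (b) then follows immediately by applying part (a) with $U:={\rm Isol}(\overline{G})\subset V(\overline{G})=V(G)$.

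I do not anticipate any real obstacle; the only subtlety to make explicit is that the complement operation is taken on the same ambient vertex set on both sides of the equality in (a), which is exactly why the equivalence "$\{t_i,t_j\}\notin E(G)\Leftrightarrow\{t_i,t_j\}\notin E(G\setminus U)$" is valid (it requires both endpoints to lie outside $U$, which is guaranteed because the pair is chosen from $V(G)\setminus U$).
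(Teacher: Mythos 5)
Your proposal is correct and follows essentially the same route as the paper: both arguments check that the two graphs have the same vertex set $V(G)\setminus U$ and then compare edge sets (the paper via two inclusions, you via a single chain of equivalences), and both obtain (b) by specializing $U={\rm Isol}(\overline{G})$ in (a). No gaps.
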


\begin{proof} (a) To show equality we need to show that the vertex set
and edge set of the two graphs $\overline{G}\setminus{U}$ and $\overline{G\setminus
U}$ are equal. From the equalities
\begin{align*}
&V(\overline{G}\setminus U)=V(\overline{G})\setminus U=V(G)\setminus
{U},\\
&V(\overline{G\setminus U})=V(G\setminus U)=V(G)\setminus U,
\end{align*}
the vertex sets of the two graphs are equal. We set
$$H_0=\overline{G}\setminus U\, \mbox{  and }\, H=G\setminus U.
$$ 
\quad Note that $V(H_0)=V(\overline{H})=V(H)$. To show the inclusion $E(H_0)\subset
E(\overline{H})$ take
$e\in E(H_0)$. Then, $e\in E(\overline{G})$ and $e\cap U=\emptyset$. If $e\in E(H)$, then $e\in E(G)$ and
$e\cap U=\emptyset$, a contradiction. Thus, 
$e\in E(\overline{H})$. To show the inclusion
$E(H_0)\supset
E(\overline{H})$ take
$e\in E(\overline{H})$. Then, $e\notin E(H)$ and $e\subset
V(\overline{H})$. Hence, $e\in E(\overline{G})$ and $e\cap U=\emptyset$. Thus, $e\in E(H_0)$.

(b) Setting $U={\rm Isol}(\overline{G})$, this part follows from (a).
\end{proof}

\begin{lemma}\label{sc-lemma} Let $I$ be the edge ideal of a graph $G$, let $C_1$, 
$C_2$ be two odd cycles of $G$ with at most one common vertex, and let
$M\sb{C1,C_2}:=(\prod_{t_i\in C_1}t_i\prod_{t_i\in C_2}t_i)z\sp{{(|C_1|+|C_2|)}/{2}}$. The following hold.
\begin{enumerate}
\item[(a)] If $|C_1\cap C_2|=1$, then $M_{C_1,C_2}\in S[Iz]$.
\item[(b)] If $C_1\cap C_2=\emptyset$ and there is $e\in E(G)$
intersecting $C_1$ and $C_2$, then $M_{C_1,C_2}\in S[Iz]$.
\item[(c)] If $C_1,C_2$ form a Hochster configuration, then
$M_{C_1,C_2}\notin S[Iz]$.
\end{enumerate}
\end{lemma}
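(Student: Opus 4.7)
My plan is to reduce each assertion to a combinatorial statement about edge decompositions of $t_{C_1}t_{C_2}$. Recall that $t^\alpha z^n \in S[Iz]$ if and only if there exist edges $f_1,\ldots,f_n$ of $G$ and $\gamma\in\mathbb{N}^s$ with $t^\alpha = t^\gamma\, t_{f_1}\cdots t_{f_n}$. In our setting the total $t$-degree of $M_{C_1,C_2}$ equals $|C_1|+|C_2| = 2n$, where $n=(|C_1|+|C_2|)/2$, which is exactly twice the $z$-exponent. Hence a degree count forces $\gamma=0$, so membership $M_{C_1,C_2}\in S[Iz]$ reduces to writing $t_{C_1}t_{C_2}$ as an exact product of $n$ edge-monomials of $G$.

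For part (a), let $v$ denote the shared vertex and write $C_1=(v,a_1,\ldots,a_{2k_1})$ cyclically, so $|C_1|=2k_1+1$. I would take the $k_1+1$ edges $\{v,a_1\},\{a_2,a_3\},\{a_4,a_5\},\ldots,\{a_{2k_1-2},a_{2k_1-1}\},\{a_{2k_1},v\}$ of $C_1$, which together cover $v$ twice and every $a_i$ once, yielding the product $v\cdot t_{C_1}$. Next, take a perfect matching of the even-length path $C_2\setminus\{v\}$, using $k_2$ cycle edges of $C_2$ and yielding $t_{C_2}/v$. Multiplying gives $k_1+k_2+1=n$ edges of $G$ whose product is exactly $t_{C_1}t_{C_2}$, proving (a). For part (b), let $e=\{t',t''\}$ with $t'\in C_1$ and $t''\in C_2$; perfect matchings of the even-length paths $C_1\setminus\{t'\}$ and $C_2\setminus\{t''\}$ supply $k_1$ and $k_2$ cycle edges of $C_1$ and $C_2$, respectively, and adjoining $e$ produces $n$ edges of $G$ whose product is $t_{C_1}t_{C_2}$.

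For part (c), I would argue by contradiction. A hypothetical decomposition $t_{C_1}t_{C_2}=t_{f_1}\cdots t_{f_n}$ forces each $f_i$ to have both endpoints in $C_1\cup C_2$, and since $t_{C_1}t_{C_2}$ is squarefree (the two cycles being vertex-disjoint in a Hochster configuration, as condition (i) yields $C_1\cap C_2=\emptyset$), the set $\{f_1,\ldots,f_n\}$ must be a perfect matching of the induced subgraph $G[C_1\cup C_2]$. Conditions (i) and (ii) in the definition of a Hochster configuration guarantee that $G[C_1\cup C_2]$ has no edge between $C_1$ and $C_2$ and no chord within $C_1$ or $C_2$, so its edge set is exactly $E(C_1)\cup E(C_2)$. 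The perfect matching would then restrict to perfect matchings of $C_1$ and of $C_2$ separately, which is impossible since both are odd cycles. The only real obstacle is the opening degree-count reduction; once that is in place, (a) and (b) are explicit combinatorial constructions, and (c) reduces to the classical fact that odd cycles admit no perfect matching.
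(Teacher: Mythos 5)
Your proof is correct and follows essentially the same route as the paper: parts (a) and (b) are the same explicit near-perfect-matching factorizations, and part (c) is the same parity obstruction, which the paper phrases as an edge-count contradiction ($\ell_1>2r$, $\ell_2\geq 2(m-r)$) and you phrase, after the degree count forcing an exact decomposition, as the nonexistence of a perfect matching in a disjoint union of odd cycles.
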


\begin{proof} We may assume that $C_1=\{t_1,\ldots,t_{\ell_1}\}$ and 
$C_2=\{t_{\ell+1},\ldots,t_{\ell_1+\ell_2}\}$ are odd cycles of lengths
$\ell_1$ and $\ell_2$, respectively.

(a) Assume that $C_1\cap C_2=\{t_{\ell_1}\}$ and
$t_{\ell_1}=t_{\ell_1+1}$. Then
\begin{align*}
M_{C_1,C_2}&=(t_1\cdots t_{\ell_1})(t_{\ell_1+1}\cdots
t_{\ell_1+\ell_2})z^{(\ell_1+\ell_2)/2}\\
&=(t_1\cdots t_{\ell_1-1} t_{\ell_1}^2z^{(\ell_1+1)/2})(t_{\ell_1+2}\cdots
t_{\ell_1+\ell_2}z^{(\ell_2-1)/2}).
\end{align*}
\quad Thus, $M_{C_1,C_2}\in I^{(\ell_1+1)/2}z^{(\ell_1+1)/2} 
I^{(\ell_2-1)/2}z^{(\ell_2-1)/2}=I^{(\ell_1+\ell_2)/2}z^{(\ell_1+\ell_2)/2}\subset
S[Iz]$.

(b) Assume that $C_1\cap C_2=\emptyset$ and
$\{t_{\ell_1},t_{\ell_1+1}\}$ is an edge of $G$. Then
\begin{align*}
M_{C_1,C_2}&=(t_1\cdots t_{\ell_1})(t_{\ell_1+1}\cdots
t_{\ell_1+\ell_2})z^{(\ell_1+\ell_2)/2}\\
&=(t_1\cdots t_{\ell_1}t_{\ell_1+1}z^{(\ell_1+1)/2})(t_{\ell_1+2}\cdots
t_{\ell_1+\ell_2}z^{(\ell_2-1)/2}).
\end{align*}
\quad Thus, $M_{C_1,C_2}\in I^{(\ell_1+1)/2}z^{(\ell_1+1)/2} 
I^{(\ell_2-1)/2}z^{(\ell_2-1)/2}=I^{(\ell_1+\ell_2)/2}z^{(\ell_1+\ell_2)/2}\subset
S[Iz]$.

(c) We argue by contradiction assuming that $M_{C_1,C_2}\in S[Iz]$. 
Then, $M_{C_1,C_2}\in I^mz^m$ for some $m\geq 1$, that is,
$\prod_{t_i\in C_1\cup C_2}t_i\in I^m$ and $m=(\ell_1+\ell_2)/2$. 
Thus
$$
(t_1\cdots t_{\ell_1})(t_{\ell_1+1}\cdots
t_{\ell_1+\ell_2})=t_{e_1}\cdots t_{e_m}t^\delta
$$
for some edges $e_1,\ldots,e_m$ of $G$. Hence, for each $1\leq i\leq
m$, either $t_{e_i}$ divides $t_1\cdots t_{\ell_1}$  or $t_{e_i}$
divides $t_{\ell_1+1}\cdots
t_{\ell_1+\ell_2}$. Thus, we may assume that $t_{e_1}\cdots t_{e_r}$
divides $t_1\cdots t_{\ell_1}$, $r\geq 1$, and $t_{e_{r+1}}\cdots
t_{e_m}$ divides $t_{\ell_1+1}\cdots
t_{\ell_1+\ell_2}$. Therefore, $\ell_1\geq 2r$ and $\ell_2\geq
2(m-r)$. As $\ell_1$ is odd, one has $\ell_1>2r$, and consequently 
$m=(\ell_1+\ell_2)/2>r+(m-r)=m$, a contradiction.
\end{proof}

\begin{theorem}{\rm(\cite[Conjecture~6.9]{ITG}, 
\cite[Corollary~5.8.10]{graphs})}\label{apr16-02} 
The edge ideal $I(G)$ of a 
graph $G$ is
normal if and only if $G$ admits no Hochster configurations.
\end{theorem}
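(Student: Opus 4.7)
The plan is to use the equivalence that $I(G)$ is normal if and only if the Rees algebra $S[Iz]$ is integrally closed (Eqs.~\eqref{jul30-22-1}--\eqref{jul30-22-2}), combined with Vasconcelos's description from \cite[p.~265]{graphs} of $\overline{S[Iz]}$ as the $S[Iz]$-module generated by the monomials $M_{C_1,C_2}$ associated with pairs $(C_1, C_2)$ of induced odd cycles of $G$. The problem then reduces to checking, for each such pair, whether $M_{C_1,C_2}$ lies in $S[Iz]$, a combinatorial question settled by Lemma~\ref{sc-lemma}.

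For the ``only if'' direction, suppose $G$ has a Hochster configuration $(C_1, C_2)$. Condition (i) of a Hochster configuration forces $V(C_1) \cap V(C_2) = \emptyset$ since each $C_i$ has length at least three. The identity $\prod_{t_i \in V(C)} t_i^2 = \prod_{e \in E(C)} t_e \in I^{|C|}$, valid for any cycle $C$ because each vertex lies in exactly two of its edges, applied to $C_1$ and $C_2$ yields $(t^a)^2 \in I^{|C_1|+|C_2|} = I^{2n}$, where $t^a := \prod_{t_i \in V(C_1) \cup V(C_2)} t_i$ and $n := (|C_1|+|C_2|)/2$. Lemma~\ref{icd} then gives $t^a \in \overline{I^n}$, so that $M_{C_1,C_2} \in \overline{S[Iz]}$. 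On the other hand Lemma~\ref{sc-lemma}(c) asserts $M_{C_1,C_2} \notin S[Iz]$, whence $S[Iz]$ is not integrally closed and $I(G)$ is not normal.

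For the ``if'' direction, assume $G$ admits no Hochster configuration. By Vasconcelos's description it suffices to show $M_{C_1,C_2} \in S[Iz]$ for every pair $(C_1, C_2)$ of induced odd cycles of $G$. Since condition (ii) of a Hochster configuration is automatic for such a pair, our hypothesis forces condition (i) to fail, i.e.\ $V(C_1) \cap N_G(C_2) \neq \emptyset$. Thus either the cycles share a vertex, or they are vertex-disjoint but some edge of $G$ connects them. Lemma~\ref{sc-lemma}(a) handles the first subcase and Lemma~\ref{sc-lemma}(b) the second, so in either case $M_{C_1,C_2} \in S[Iz]$. This gives $\overline{S[Iz]} = S[Iz]$, and $I(G)$ is normal.

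The hard part is invoking Vasconcelos's description in the precise form required, and in particular ensuring that the pairs of cycles under consideration meet in at most one vertex, which is the hypothesis of Lemma~\ref{sc-lemma}. Should Vasconcelos's theorem allow pairs $(C_1, C_2)$ of induced odd cycles with $|V(C_1) \cap V(C_2)| \geq 2$, an additional reduction would be needed, typically by splicing the cycles along a shared $(u,v)$-path and selecting an odd sub-cycle by parity to reach a pair already covered by Lemma~\ref{sc-lemma}(a).
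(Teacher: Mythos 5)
Your proposal is correct and follows essentially the same route as the paper: Vasconcelos's description of $\overline{S[Iz]}$ via the monomials $M_{C_1,C_2}$ together with Lemma~\ref{sc-lemma}, with the same case analysis on whether the cycles meet in a vertex or are joined by an edge. The only (harmless) differences are that you verify $M_{C_1,C_2}\in\overline{S[Iz]}$ directly via $(t^a)^2\in I^{2n}$ and Lemma~\ref{icd} instead of reading it off the description, and your worry about cycles sharing two or more vertices does not arise, since the cited description \cite[Proposition~5.8.13]{graphs} already restricts to pairs of induced odd cycles with at most one common vertex.
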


\begin{proof} To show this result we use the following description of the
integral closure of the Rees algebra $S[Iz]$ of the ideal $I=I(G)$
\cite[Proposition~5.8.13]{graphs}: 
$$
\overline{S[Iz]}=S[Iz][\mathcal{B}'],
$$
where $\mathcal{B}'$ is the set of all 
monomials $M\sb{C1,C_2}:=(\prod_{t_i\in C_1}t_i\prod_{t_i\in
C_2}t_i)z\sp{{(|C_1|+|C_2|)}/{2}}$ 
such
that $C_1$ and $C_2$ are two induced odd cycles of $G$ with at most one common
vertex. If $C_1$ and
$C_2$ intersect at a point or $C_1$ and $C_2$ are joined by at least
one edge of $G$, then $M_{C_1,C_2}$ is in
$S[It]$ by Lemma~\ref{sc-lemma}. Hence, if $\mathcal{U}$ is the set of all 
monomials $M_{C_1,C_2}$ such that $C_1$, $C_2$ is a Hochster
configuration of $G$, one has the equality
$$
\overline{S[Iz]}=S[Iz][\mathcal{U}].
$$
\quad Therefore, by Lemma~\ref{sc-lemma}(c), $S[Iz]$ is normal if and only if $G$ has no Hochster
configurations, and the result follows from the fact that $I$ is
normal if and only if $S[Iz]$ is normal. 
\end{proof}

\begin{corollary}\label{normality-nc0}
Let $G$ be a graph. The following hold.
\begin{enumerate}
\item[(a)] If $\overline{G}$ is the disjoint union of two odd cycles of
length at least $5$, then $I_c(G)$ is not normal. 
\item[(b)] If $\overline{G}$ is an odd cycle of
length at least $5$, then $I_c(G)$ is normal. 
\end{enumerate}
\end{corollary}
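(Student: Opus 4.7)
The plan is to reduce both statements to the Hochster configuration criterion (Theorem~\ref{apr16-02}) by passing through the dual ideal and using Lemma~\ref{jul11-22}(d). In both cases $\overline{G}$ is a disjoint union of odd cycles of length at least $5$, so $\overline{G}$ has no triangles and no isolated vertices. Hence Lemma~\ref{jul11-22}(d) applies and gives
\[
I_c(G)\;=\;I(\overline{G})^*.
\]
By Proposition~\ref{duality-i-i*}, the normality of $I_c(G)=I(\overline{G})^*$ is equivalent to the normality of the edge ideal $I(\overline{G})$, so in each case it suffices to decide whether $\overline{G}$ admits a Hochster configuration.

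For part (a), write $\overline{G}=C_1\sqcup C_2$ where $C_1,C_2$ are vertex-disjoint odd cycles of length $\geq 5$. Because the two cycles lie in different connected components, they satisfy $C_1\cap N_{\overline{G}}(C_2)=\emptyset$; moreover, as cycles of length $\geq 5$, neither admits a chord in $\overline{G}$, so each $C_i$ is induced. Thus $(C_1,C_2)$ is itself a Hochster configuration of $\overline{G}$, and Theorem~\ref{apr16-02} gives that $I(\overline{G})$ is not normal, so $I_c(G)$ is not normal.

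For part (b), $\overline{G}$ is a single induced odd cycle. Any Hochster configuration would require two vertex-disjoint induced odd cycles inside $\overline{G}$, but a cycle graph contains no proper cycle as a subgraph, let alone two disjoint ones. Hence $\overline{G}$ has no Hochster configuration, so Theorem~\ref{apr16-02} yields that $I(\overline{G})$ is normal, and therefore $I_c(G)$ is normal.

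The arguments are essentially bookkeeping once the correct reduction is identified; the only real step is recognizing that the hypotheses of Lemma~\ref{jul11-22}(d) are satisfied (so we may replace the ideal of covers of $G$ by the dual of the edge ideal of $\overline{G}$) and that in case (a) the two components are automatically an induced, non-adjacent pair of odd cycles of odd length $\geq 5$. No other case analysis is required.
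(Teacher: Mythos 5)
Your proposal is correct and follows essentially the same route as the paper: apply Lemma~\ref{jul11-22}(d) (no triangles, no isolated vertices) to get $I_c(G)=I(\overline{G})^*$, reduce to the normality of $I(\overline{G})$ via Proposition~\ref{duality-i-i*}, and then invoke Theorem~\ref{apr16-02}, noting that the two disjoint odd cycles form a Hochster configuration in case (a) and that a single odd cycle admits none in case (b). No substantive differences from the paper's argument.
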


\begin{proof} (a) As $\overline{G}$ has no triangles and no isolated
vertices, by
Lemma~\ref{jul11-22}, 
one has $I_c(G)=I(\overline{G})^*$. Then, 
by Proposition~\ref{duality-i-i*}, $I_c(G)$ is not normal if and only if
$I(\overline{G})$ is not normal. As $\overline{G}$ is a Hochster
configuration, by Theorem~\ref{apr16-02}, $I(\overline{G})$ is not
normal. Thus, $I_c(G)$ is not normal.

(b) As $\overline{G}$ has no triangles and no isolated vertices, by
Lemma~\ref{jul11-22}, 
one has the equality $I_c(G)=I(\overline{G})^*$. Then, 
by Proposition~\ref{duality-i-i*}, $I_c(G)$ is normal if and only if
$I(\overline{G})$ is normal. As $\overline{G}$ is an odd cycle, by
Theorem~\ref{apr16-02}, $I(\overline{G})$ is normal. Thus, $I_c(G)$ is
normal.
\end{proof}

\begin{corollary}\label{normality-nc}
Let $G$ be a graph. If $I_c(G)$ is normal, then $\overline{G}$ has no
Hochster configuration with induced odd cycles $C_1$, $C_2$ of
length at least $5$.  
\end{corollary}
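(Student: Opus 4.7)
The plan is to argue by contraposition: assuming that $\overline{G}$ contains a Hochster configuration with induced odd cycles $C_1, C_2$ of length at least $5$, I will produce an induced subgraph $G'$ of $G$ which is a minor of $G$ viewed as a clutter, and whose cover ideal $I_c(G')$ is demonstrably non-normal by Corollary~\ref{normality-nc0}(a). Invoking Proposition~\ref{may21-22} then forces $I_c(G)$ to be non-normal as well.

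First I would set $V' := V(C_1) \cup V(C_2)$, $W := V(G) \setminus V'$, and $G' := G[V']$, the subgraph of $G$ induced on $V'$. Recalling that graph-theoretic vertex deletion agrees with clutter deletion (the edges of $G \setminus w$ are exactly the edges of $G$ not containing $w$), the graph $G'$ is obtained from $G$ by the sequence of clutter deletions indexed by the vertices of $W$, hence is a minor of $G$. Consequently, by Proposition~\ref{may21-22}, if $I_c(G)$ were normal then so would be $I_c(G')$.

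Next I would identify $\overline{G'}$ explicitly. By Lemma~\ref{jul14-22}(a), $\overline{G'} = \overline{G \setminus W} = \overline{G} \setminus W = \overline{G}[V']$. Hochster condition (ii) says each $C_i$ is an induced cycle in $\overline{G}$, so the induced subgraphs of $\overline{G}[V']$ spanned by $V(C_1)$ and $V(C_2)$ are precisely $C_1$ and $C_2$ respectively. Hochster condition (i), namely $C_1 \cap N_{\overline{G}}(C_2) = \emptyset$, ensures that no vertex of $V(C_1)$ is adjacent in $\overline{G}$ to any vertex of $V(C_2)$. Therefore $\overline{G'}$ is the disjoint union of the two odd cycles $C_1$ and $C_2$, each of length at least $5$.

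Now Corollary~\ref{normality-nc0}(a) applied to $G'$ gives that $I_c(G')$ is not normal, contradicting the conclusion drawn from Proposition~\ref{may21-22}. Hence $I_c(G)$ cannot be normal, which is the desired contrapositive. The main obstacle is purely one of bookkeeping: one must carefully translate the two Hochster conditions into the graph-theoretic statement that $\overline{G'}$ splits as a disjoint union of $C_1$ and $C_2$, and one must verify that the ambient clutter-minor machinery applies to $G'$; once both points are checked, the result is an immediate combination of two previously proved results.
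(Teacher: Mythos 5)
Your argument is correct and is essentially the paper's own proof: both delete the vertices outside $V(C_1)\cup V(C_2)$, invoke Proposition~\ref{may21-22} to pass normality of $I_c(G)$ to the resulting minor, use Lemma~\ref{jul14-22} together with the two Hochster conditions to identify the complement of the deleted graph with the disjoint union $C_1\cup C_2$, and conclude via Corollary~\ref{normality-nc0}(a). No gaps; the bookkeeping you flag (induced cycles and no edges between them in $\overline{G}$) is exactly the justification the paper gives.
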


\begin{proof} We argue by contradiction assuming that $\overline{G}$ 
has a Hochster configuration with induced odd cycles $C_1$, $C_2$ of
length at least $5$. Let $U$ be the set of vertices of $\overline{G}$
not in $V(C_1)\cup V(C_2)$. Then, by Proposition~\ref{may21-22},
$I_c(G\setminus U)$ is normal. The subgraph $\overline{G}\setminus U$
is the union $C_1\cup C_2$ of the cycles $C_1$ and $C_2$ because
$C_1\cup C_2$ is an induced subgraph of $\overline{G}$. Hence, by
Lemma~\ref{jul14-22}, $\overline{G\setminus U}=C_1\cup C_2$.
Therefore, by Corollary~\ref{normality-nc0}, $I_c(G\setminus U)$ is
not normal, a contradiction.
\end{proof}

\begin{theorem}[Duality criterion]\label{dim2-comb-char} Let $G$ be a 
graph with $\beta_0(G)\leq 2$. The 
following hold.
\begin{enumerate}
\item[(a)] $I_c(G)$ is normal if and only if $I(\overline{G})$ is normal. 
\item[(b)] $I_c(G)$ is normal if and only if $\overline{G}$ has no
Hochster configurations.
\end{enumerate}
\end{theorem}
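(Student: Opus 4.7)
The plan is to reduce the problem to the case where $\overline{G}$ has no isolated vertices, and then apply the identity $I_c(G)=I(\overline{G})^*$ together with the duality between $I$ and $I^*$ already established in Proposition~\ref{duality-i-i*}. First observe that $\beta_0(G)\leq 2$ is equivalent to $\overline{G}$ being triangle-free, since independent sets of $G$ are precisely the cliques of $\overline{G}$; hence Lemma~\ref{jul11-22} is applicable throughout.

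For the reduction, let $U$ denote the set of isolated vertices of $\overline{G}$, which coincides with the set of universal vertices of $G$. Since each $t_i\in U$ exhibits $G$ as the cone over $G\setminus t_i$ with apex $t_i$, applying Corollary~\ref{aug19-22} iteratively gives that $I_c(G)$ is normal if and only if $I_c(G\setminus U)$ is normal. On the other hand, no variable corresponding to a vertex of $U$ appears in any generator of $I(\overline{G})$, so this ideal is the extension to $S$ of $I(\overline{G}\setminus U)$; normality of a monomial ideal is preserved under adjoining free variables, as the corresponding Rees algebras differ only by a polynomial extension (alternatively, the Newton polyhedron description of integral closure gives this directly). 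By Lemma~\ref{jul14-22}(b), one has $\overline{G}\setminus U=\overline{G\setminus U}$, which has no isolated vertices and is still triangle-free.

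In this reduced situation Lemma~\ref{jul11-22}(d) yields $I_c(G\setminus U)=I(\overline{G\setminus U})^*$, and Proposition~\ref{duality-i-i*} shows that $I(\overline{G\setminus U})^*$ is normal precisely when $I(\overline{G\setminus U})$ is normal. Chaining all these equivalences gives
$$I_c(G)\text{ normal}\iff I_c(G\setminus U)\text{ normal}\iff I(\overline{G\setminus U})\text{ normal}\iff I(\overline{G})\text{ normal},$$
which is part~(a). For part~(b), Theorem~\ref{apr16-02} characterizes normality of $I(\overline{G})$ by the absence of Hochster configurations in $\overline{G}$, so combining this with part~(a) produces the conclusion.

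The main obstacle is administering the reduction on both sides of the equivalence simultaneously: the cone-step for $I_c(G)$ is immediate from Corollary~\ref{aug19-22}, but the analogous step for $I(\overline{G})$ requires a short justification that extending a monomial ideal by free variables preserves normality. Degenerate cases—most notably $G$ complete, where $U=V(G)$—are handled by the paper's conventions that $I_c$ of a discrete graph equals $S$ and $I$ of a discrete graph equals $(0)$, both trivially normal. Once this bookkeeping is in place, the argument is a direct assembly of previously established results.
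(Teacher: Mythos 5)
Your proposal is correct, and it assembles the same key ingredients as the paper (Lemma~\ref{jul11-22}(d), Lemma~\ref{jul14-22}, Proposition~\ref{duality-i-i*}, Corollary~\ref{aug19-22}, and Theorem~\ref{apr16-02} for part~(b)), but it organizes the reduction differently. The paper proves the implication ``$I_c(G)$ normal $\Rightarrow I(\overline{G})$ normal'' by induction on $|V(G)|$, handling an isolated vertex of $\overline{G}$ via Proposition~\ref{may21-22} (normality of cover ideals passes to deletions) and only invoking the cone result of Corollary~\ref{aug19-22} in the converse direction; you instead exploit that Corollary~\ref{aug19-22} is an equivalence and perform a single bidirectional reduction, stripping all universal vertices of $G$ (equivalently, ${\rm Isol}(\overline{G})$) at once, which eliminates both the induction and the appeal to Proposition~\ref{may21-22}. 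The price is the explicit auxiliary fact that a monomial ideal and its extension obtained by adjoining unused variables are simultaneously normal; this is standard (immediate from the Newton polyhedron description of $\overline{I^n}$, since the new coordinates are free), and the paper in fact uses it tacitly when it identifies $I(\overline{G}\setminus t_i)$ with $I(\overline{G})$ and $I(\overline{G}\setminus{\rm Isol}(\overline{G}))^*$ with $I(\overline{G})^*$, so your treatment is if anything more careful on this point. Your checks that the reduced graph $\overline{G\setminus U}$ is still triangle-free and has no isolated vertices are exactly what is needed for Lemma~\ref{jul11-22}(d). For the degenerate case of a complete graph I would state the resolution explicitly rather than leave it to conventions: stop the cone reduction at a single vertex, where $I_c(\mathcal{K}_1)=S$ and $I(\overline{G})=(0)$ are trivially normal, so (a) holds; with that sentence added, your argument is complete.
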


\begin{proof} (a) $\Rightarrow$) Assume that $I_c(G)$ is normal. We
proceed by induction on $s=|V(G)|$. If $s=1$, then $I_c(G)=S$ and
$I(\overline{G})=(0)$, and if $s=2$, then either $G$ is a discrete
graph with two vertices, $I_c(G)=S$ and 
$I(\overline{G})=(t_1t_2)$, or $G=\mathcal{K}_2$, $I_c(G)=(t_1,t_2)$ and 
$I(\overline{G})=(0)$. Thus, $I(\overline{G})$ is normal in these
cases. Assume that $s\geq 3$. 

Case (I) $t_i$ is an isolated vertex of $\overline{G}$ for
some $i$. By Proposition~\ref{may21-22}, 
$I_c(G\setminus t_i)$ is normal. Then, by induction, 
$I(\overline{G\setminus t_i})$ is normal. As $\overline{G\setminus
t_i}=\overline{G}\setminus t_i$ and $t_i$ is isolated in  
$\overline{G}$, one has $I(\overline{G}\setminus
t_i)=I(\overline{G})$. Thus, $I(\overline{G})$ is normal.

Case (II) $\overline{G}$ has no isolated vertices. The graph
$\overline{G}$ has no triangles because $\beta_0(G)\leq 2$. Then, by
Lemma~\ref{jul11-22}, one has $I_c(G)=I(\overline{G})^*$. Hence,
the ideal $I(\overline{G})^*$ is normal and, by Proposition~\ref{duality-i-i*},
the ideal $I(\overline{G})$ is normal.

$\Leftarrow$) Assume that $I(\overline{G})$ is normal. By
Lemmas~\ref{jul11-22} and \ref{jul14-22}, one has  
$$
I_c(G\setminus {\rm Isol}(\overline{G}))=I\left(\overline{G\setminus{\rm
Isol}(\overline{G})}\right)^*=I\left(\overline{G}\setminus{\rm
Isol}(\overline{G})\right)^*=I\left(\overline{G}\right)^*.
$$
\quad Hence, the ideal $I_c(G\setminus {\rm Isol}(\overline{G}))$ is normal 
because $I\left(\overline{G}\right)^*$ is normal by
Proposition~\ref{duality-i-i*}. We set $H=G\setminus{\rm
Isol}(\overline{G})$. We may assume that $t_1,\ldots,t_r$ are the isolated vertices
of $\overline{G}$, then
$$
G=H\cup H_1\cup\cdots\cup H_r,
$$
where $H_i$ is the subgraph of $G$ given by 
\begin{align*}
&V(H_1)=V(H)\cup\{t_1\}\,\mbox{ and }\,E(H_1)=\{\{t_1,t_j\}\mid t_j\in
V(H)\}\,\mbox{ if }\,i=1,\\
&V(H_i)=V(H)\cup\{t_1,\ldots,t_i\}\,\mbox{ and }\,E(H_i)=\{\{t_i,t_j\}\mid t_j\in
V(H)\cup\{t_1,\ldots,t_{i-1}\}\,\mbox{ if }\,i\geq 2.
\end{align*}
\quad Setting $G_0=H$ and $G_i=H\cup H_1\cup\cdots\cup H_i$ for
$i=1,\ldots,r$, note that $G_i$ is the cone over $G_{i-1}$ with apex
$t_i$ for $i=1,\ldots,r$, i.e.,  $G_i$ is obtained from $G_{i-1}$ by 
joining every vertex of $G_{i-1}$ to $t_i$. As $I_c(H)$ is normal, by successively
applying Corollary~\ref{aug19-22}, we obtain that $I_c(G)$ is
normal.

(b) This part follows from part (a) and Theorem~\ref{apr16-02}.
\end{proof}

\section{Examples}\label{examples-section}
In this section we give some examples that complement our results. 
In particular, we show that Theorem~\ref{normal-b} does 
not extend to ideals generated by monomials of degree greater than
$2$ (Example~\ref{example1}). In
Examples~\ref{example3}--\ref{example5}, we illustrate
Lemma~\ref{jul11-22} and the duality criterion given in
Theorem~\ref{dim2-comb-char}. 
Then, we show that 
none of the implications of the duality criterion hold for arbitrary graphs
(Example~\ref{kaiser-example6}).

\begin{example}\label{example1} Let $S=\mathbb{Q}[t_1,\ldots,t_{10}]$ be a polynomial ring and
let $I=(t^{v_1},\ldots,t^{v_{10}})$ be the monomial ideal of $S$ generated by the set 
\begin{align*}
\mathcal{G}(I)=&\{t_1t_2t_3t_4t_5t_6t_7,\,t_1t_2t_3t_4t_5t_7t_8,\,t_1t_2t_3t_4t_5t_8t_9,
\,t_1t_2t_3t_4t_5t_8t_{10},\,t_1t_2t_3t_4t_7t_8t_{10},\,\\
&t_2t_3t_5t_7t_8t_9t_{10},\,t_1t_2t_6t_7t_8t_9t_{10},\,t_2t_3t_6t_7t_8t_9t_{10},
\,t_3t_4t_6t_7t_8t_9t_{10},\,t_3t_5t_6t_7t_8t_9t_{10}\}.
\end{align*}
\quad Then, using Procedures~\ref{procedure1} and
\ref{procedure1-bis}, we get that 
$\mathcal{B}=\{e_{11}\}\cup\{e_i+e_{11}\}_{i=1}^{10}\cup\{(v_i,1)\}_{i=1}^{10}$
is not a Hilbert basis and $I$ is a
normal ideal. Thus, Theorem~\ref{normal-b} does 
not extend to ideals generated by monomials of degree greater than
$2$.
\end{example}

\begin{example}\label{example2}
Let $S=\mathbb{Q}[t_1,\ldots,t_7]$ be a polynomial ring and let
$$
I=I(G)=(t_1t_3,t_1t_4,t_2t_4,t_1t_5,t_2t_5,t_3t_5,t_1t_6,
t_2t_6,t_3t_6,t_4t_6,t_2t_7,t_3t_7,t_4t_7,t_5t_7)
$$
be the edge ideal of the graph $G$ of Figure~\ref{figure1}. 
The complement $\overline{G}$ of $G$ is a cycle of length $7$. The graph $G$ is called an  
\textit{odd antihole} in the theory of perfect graphs \cite[p.~
71]{golumbic}.
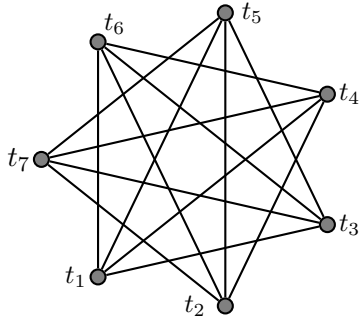
\begin{figure}[ht]
\setlength{\unitlength}{.035cm}
\begin{tikzpicture}[scale=1.0,thick]
\tikzstyle{every node}=[minimum width=0pt, inner sep=2pt]
\draw (-2,0) node (0) [draw, circle, fill=gray] {};
\draw (-2.3,0) node () {$t_7$};
\draw (-1.2469796037174672,-1.56366296493606) node (1) [draw, circle, fill=gray] {};
                        \draw (-1.54,-1.56366296493606) node () {\small $t_1$};
\draw (0.44504186791262895,-1.9498558243636472) node (2) [draw, circle, fill=gray] {};
\draw (0.042504186791262895,-1.9498558243636472) node () {\small $t_2$};			
\draw (1.8019377358048383,-0.8677674782351165) node (3) [draw, circle, fill=gray] {};
\draw (2.099377358048383,-0.8677674782351165) node () { \small $t_3$};
\draw (1.8019377358048383,0.8677674782351158) node (4) [draw, circle, fill=gray] {};
\draw (2.099377358048383,0.8677674782351158) node () { \small $t_4$};
\draw (0.44504186791262895,1.9498558243636472) node (5) [draw, circle, fill=gray] {};
\draw (0.80504186791262895,1.9498558243636472) node () {\small $t_5$};
\draw (-1.2469796037174665,1.5636629649360598) node (6) [draw, circle, fill=gray] {};
\draw (-1.02469796037174665,1.7996629649360598) node () { \small $t_6$};
\draw  (0) edge (2);
\draw  (0) edge (3);
\draw  (0) edge (4);
\draw  (0) edge (5);
\draw  (1) edge (3);
\draw  (1) edge (4);
\draw  (1) edge (5);
\draw  (1) edge (6);
\draw  (2) edge (4);
\draw  (2) edge (5);
\draw  (2) edge (6);
\draw  (3) edge (5);
\draw  (3) edge (6);
\draw  (4) edge (6);
\end{tikzpicture}
\caption{Graph $G$ is an odd antihole with $7$ vertices.}\label{figure1}
\end{figure}

\quad Using Procedure~\ref{normality-test-procedure} for 
\textit{Macaulay}$2$ \cite{mac2}, we obtain the
following information. 
The incidence matrix $B$ of $I_c(G)$ is
$$B=\left[\begin{matrix}
1&0&1&1&1&1&0\cr
1&1&1&1&1&0&0\cr
1&1&1&1&0&0&1\cr
1&1&1&0&0&1&1\cr
1&1&0&0&1&1&1\cr
0&1&0&1&1&1&1\cr
0&0&1&1&1&1&1
\end{matrix}
\right].
$$
\quad The ideals $I_c(G)$ and $I(G)$ are normal, and so are the ideals
$I_c(\overline{G})$ and $I(\overline{G})$. The normality of
$I(G)$, $I(\overline{G})$, and $I_c(G)$ also follow from
Theorem~\ref{apr16-02} and
Corollary~\ref{normality-nc0}, 
and the normality of $I_c(\overline{G})$ also
follows from \cite[Theorem~1.10]{Al-Ayyoub-Nasernejad-cover-ideals}. 
\end{example}

\begin{example}\label{two-triangles-example}
Let $G$ be a graph whose connected components are two triangles 
$G_1$ and $G_2$. Then, by Theorem~\ref{apr16-02}, $I(G_i)$ is normal
for $i=1,2$ but $I(G)$ is not normal by the same theorem.
\end{example}

\begin{example}\label{example3} If $G$ is the bipartite graph $\mathcal{K}_{1,2}$ with
edges $\{t_1,t_2\}$ and $\{t_1,t_3\}$. Then, $t_1$ is an isolated
vertex of $\overline{G}$, $E(\overline{G})=\{\{t_2,t_3\}\}$ and, by
Lemma~\ref{jul11-22}, one has
$$
I_c(G)=(t_2t_3)+I(\overline{G})^*=(t_2t_3)+(t_2t_3)^*=(t_2t_3,\,
t_1).
$$
\end{example}

\begin{example}\label{example4} If $G$ is a graph whose independence number
$\beta_0(G)$ is $1$, then $G=\mathcal{K}_s$ is a
complete graph, $I(\overline{G})=(0)$, $I_c(G)$ is normal
(Theorem~\ref{dim2-comb-char}) and, by Lemma~\ref{jul11-22}, 
one has
$$
I_c(G)=(\{(t_1\cdots t_s)/t_i\mid 1\leq i \leq s\}).
$$
\quad The normality of $I_c(G)$ also follows from
the fact that the ideal generated by all squarefree monomials of
$S$ of fixed degree $k\geq 1$ is normal \cite[Proposition~2.9]{Vi4}.
\end{example}

\begin{example}\label{example5} Let $G$ be the cone with apex $t_6$ over the cycle
$C_5=\{t_1,\ldots,t_5\}$. Then, $\beta_0(G)=2$, $I_c(G)$ is normal
(Theorem~\ref{dim2-comb-char}) and, by Lemma~\ref{jul11-22}, 
one has
$$
I_c(G)=(t_1t_2t_3t_4t_5,\,
t_2t_4t_5t_6,\,t_1t_2t_4t_6,\,t_1t_3t_4t_6,\,t_1t_3t_5t_6,\,t_2t_3t_5t_6).
$$
\end{example}

\begin{example}\cite[Fig.~1,
p.~241]{persistence-ce}\label{kaiser-example6} 
Let $I_c(G)$ be the ideal of covers of the graph $G$ defined by the 
generators of the following ideal 
\begin{eqnarray*}
&
I=&(t_1t_2,t_2t_3,t_3t_4,t_4t_5,t_5t_6,t_6t_7,t_7t_8,t_8t_9,t_9t_{10},t_1t_{10},\\
& &t_2t_{11},t_8t_{11},
t_3t_{12},t_7t_{12},t_1t_9,t_2t_8,t_3t_7,t_4t_6,t_1t_6,t_4t_9,\\ 
& &t_5t_{10},t_{10}t_{11},t_{11}t_{12},t_5t_{12}).
\end{eqnarray*}
\quad The graph $G$ is denoted by $H_4$ in \cite{persistence-ce}. Using the normality test of
Procedure~\ref{normality-test-procedure} and {\it Macaulay\/}$2$ \cite{mac2}, we
get that $I(G)$ and $I_c(G)$ are not normal whereas $I(\overline{G})$
and $I_c(\overline{G})$ are normal. This example shows that none of
the implications of the duality criterion of 
Theorem~\ref{dim2-comb-char}(a) hold for graphs with independence 
number $\beta_0(G)$ greater than $2$ because $\beta_0(G)=4$ and
$\beta_0(\overline{G})=3$. 
\end{example}

\begin{example}\label{example7} Let $G$ be the graph whose complement $\overline{G}$
is the graph depicted in Figure~\ref{figure2}. The graph $G$ has $50$
edges and $\beta_0(G)=3$. Using the normality
test of Procedure~\ref{normality-test-procedure} for 
\textit{Macaulay}$2$ \cite{mac2}, we obtain that $I(G)$ is normal, $I_c(G)$ is not 
normal, and furthermore $I_c(G)^5$ is not integrally
closed because one has
$$
f=t_1^4\,t_2^4\,t_3^4\,t_4^4\,t_5^4\,t_6^4\,t_7^2\,
t_8^4\,t_9^4\,t_{10}^4\,t_{11}^4\,t_{12}^4\,t_{13}^4\in\overline{I_c(G)^5}\setminus
I_c(G)^5.
$$
\quad This example shows that the Hochster configurations of
$\overline{G}$, with $C_1$,
$C_2$ cycles of length at least five, are not the only obstructions
for the normality of $I_c(G)$ (see Corollary~\ref{normality-nc}).  

\begin{figure}[ht]
\setlength{\unitlength}{.035cm}
\begin{tikzpicture}[scale=1.0,thick]
		\tikzstyle{every node}=[minimum width=0pt, inner sep=2pt, circle]
			\draw (1.44,-1.52) node[draw] (1) { \tiny 1};
			\draw (3.09,-1.81) node[draw] (2) { \tiny 2};
			\draw (4.41,-1.02) node[draw] (3) { \tiny 3};
			\draw (4.38,0.59) node[draw] (4) { \tiny 4};
			\draw (3,1.59) node[draw] (5) { \tiny 5};
			\draw (1.36,1.5) node[draw] (6) { \tiny 6};
			\draw (-2.68,0.7) node[draw] (7) { \tiny 10};
			\draw (-2.49,-1.06) node[draw] (8) { \tiny 11};
			\draw (-1.18,-1.89) node[draw] (9) { \tiny 12};
			\draw (0.06,-1.45) node[draw] (10) { \tiny 13};
			\draw (0.71,0.27) node[draw] (11) { \tiny 7};
			\draw (0.12,1.45) node[draw] (12) { \tiny 8};
			\draw (-1.42,1.69) node[draw] (13) { \tiny 9};
			\draw  (1) edge (3);
			\draw  (1) edge (4);
			\draw  (1) edge (5);
			\draw  (1) edge (6);
			\draw  (2) edge (4);
			\draw  (2) edge (5);
			\draw  (2) edge (6);
			\draw  (3) edge (5);
			\draw  (3) edge (6);
			\draw  (4) edge (6);
			\draw  (7) edge (9);
			\draw  (7) edge (10);
			\draw  (7) edge (11);
			\draw  (7) edge (12);
			\draw  (8) edge (10);
			\draw  (8) edge (11);
			\draw  (8) edge (12);
			\draw  (8) edge (13);
			\draw  (9) edge (11);
			\draw  (9) edge (12);
			\draw  (9) edge (13);
			\draw  (10) edge (12);
			\draw  (10) edge (13);
			\draw  (11) edge (13);
			\draw  (5) edge (11);
			\draw  (4) edge (11);
			\draw  (3) edge (11);
			\draw  (2) edge (11);
\end{tikzpicture}
\caption{Graph $\overline{G}$ consists of two antiholes joined by a vertex.}\label{figure2}
\end{figure}
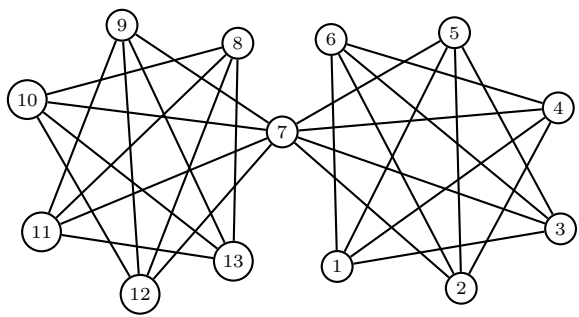
\end{example}
\begin{appendix}

\section{Procedures}\label{Appendix}

In this appendix we give procedures for \textit{Normaliz}
\cite{normaliz2} and \textit{Macaulay}$2$
\cite{mac2} to determine the normality of a monomial ideal, 
the minimal generators of the ideal of covers of a clutter, 
the Hilbert basis of the Rees cone $\mathbb{R}_+\mathcal{A}'$ defined in
Eq.~\eqref{rees-cone-eq}, and 
the Hilbert basis of the cone $\mathbb{R}_+\mathcal{B}$ generated by
the set $\mathcal{B}$ defined in
Eq.~\eqref{jul20-22}. The sets
$\mathcal{A}'$ and $\mathcal{B}$ are used to characterize the
normality of a monomial ideal (Lemma~\ref{normal-hilbert}) and the
normality of an ideal generated by monomials of degree two (Theorem~\ref{normal-b}).

\begin{procedure}\label{procedure1}
Let $I=(t^{v_1},\ldots,t^{v_q})$ be a monomial ideal of $S$. 
The following procedure for \textit{Normaliz} \cite{normaliz2}
computes the Hilbert basis 
of the cone generated by 
$$\mathcal{B}=\{e_{s+1}\}\cup\{e_i+e_{s+1}\}_{i=1}^s\cup\{(v_i,1)\}_{i=1}^q,$$
and determines whether or not $\mathcal{B}$ is a Hilbert basis.
The input is the matrix whose rows are 
the vectors in the set $\mathcal{B}$. This procedure corresponds to Example~\ref{example1}. 
\begin{verbatim}
amb_space 11
normalization 21
0 0 0 0 0 0 0 0 0 0 1
1 0 0 0 0 0 0 0 0 0 1
0 1 0 0 0 0 0 0 0 0 1
0 0 1 0 0 0 0 0 0 0 1
0 0 0 1 0 0 0 0 0 0 1
0 0 0 0 1 0 0 0 0 0 1
0 0 0 0 0 1 0 0 0 0 1
0 0 0 0 0 0 1 0 0 0 1
0 0 0 0 0 0 0 1 0 0 1
0 0 0 0 0 0 0 0 1 0 1
0 0 0 0 0 0 0 0 0 1 1
0 0 1 1 0 1 1 1 1 1 1
0 0 1 0 1 1 1 1 1 1 1
0 1 1 0 0 1 1 1 1 1 1
1 1 0 0 0 1 1 1 1 1 1
0 1 1 0 1 0 1 1 1 1 1
1 1 1 1 1 0 0 1 1 0 1
1 1 1 1 1 0 0 1 0 1 1
1 1 1 1 1 0 1 1 0 0 1
1 1 1 1 1 1 1 0 0 0 1
1 1 1 1 0 0 1 1 0 1 1
\end{verbatim}
\end{procedure}

\begin{procedure}\label{procedure1-bis}
Let $I=(t^{v_1},\ldots,t^{v_q})$ be a monomial ideal of $S$. 
The following procedure for \textit{Normaliz} \cite{normaliz2}
computes the Hilbert basis 
of the Rees cone of $I$ defined in Eq.~\eqref{rees-cone-eq} and
determines whether or not the set
$\mathcal{A}'=\{e_i\}_{i=1}^s\cup\{(v_i,1)\}_{i=1}^q$ is a Hilbert
basis. In
particular, by Lemma~\ref{normal-hilbert}, we can determine whether or
not $I$ is a normal ideal.  
The input is the matrix with rows $v_1,\ldots,v_q$.  
This procedure corresponds to Example~\ref{example1}. 
\begin{verbatim}
amb_space 11
rees_algebra 10
0 0 1 1 0 1 1 1 1 1 
0 0 1 0 1 1 1 1 1 1 
0 1 1 0 0 1 1 1 1 1 
1 1 0 0 0 1 1 1 1 1 
0 1 1 0 1 0 1 1 1 1 
1 1 1 1 1 0 0 1 1 0 
1 1 1 1 1 0 0 1 0 1 
1 1 1 1 1 0 1 1 0 0 
1 1 1 1 1 1 1 0 0 0 
1 1 1 1 0 0 1 1 0 1 
\end{verbatim}
\end{procedure}

\begin{procedure}[Normality test]\label{normality-test-procedure}
Let $I$ be a monomial ideal. We implement a procedure---that
uses the interface of \textit{Macaulay}$2$ \cite{mac2} 
to \textit{Normaliz} \cite{normaliz2}---to
determine the normality of $I$, and the normality and minimal
generators of the ideal of
covers of a clutter. This procedure corresponds to
Example~\ref{example2}. To compute other examples, in
the next procedure simply change the polynomial rings $R$ and $S$, and
the generators of $I$. 
\begin{verbatim}
restart
loadPackage("Normaliz",Reload=>true)
loadPackage("Polyhedra", Reload => true)
R=QQ[t1,t2,t3,t4,t5,t6,t7];
--antihole, complement of C7 
I=monomialIdeal(t1*t3,t1*t4,t1*t5,t1*t6,t2*t4,t2*t5,t2*t6,t2*t7,
t3*t5,t3*t6,t3*t7,t4*t6,t4*t7,t5*t7)
dim I
--Ideal of covers of clutter associated to I
J=dual(I)
--transpose incidence matrix of I
A = matrix flatten apply(flatten entries gens I , exponents)
--transpose incidence matrix of J
AJ=matrix flatten apply(flatten entries gens J , exponents)
--generators of Rees cone of I
M = id_(ZZ^(numcols(A)+1))^{0..numcols(A)-1}||
(A|transpose matrix {for i to numrows A-1 list 1})
--generators of Rees cone of J
MJ = id_(ZZ^(numcols(AJ)+1))^{0..numcols(AJ)-1}||
(AJ|transpose matrix {for i to numrows AJ-1 list 1})
--rows of M
l= entries M
--rows of MJ
lJ= entries MJ
--Next we compute the normalization of Rees algebras
S=QQ[t1,t2,t3,t4,t5,t6,t7,t8];
L=for i in l list S_i
LJ=for i in lJ list S_i
--Normalization of the Rees algebra of I
ICL=intclToricRing L
gens ICL
#gens ICL
flatten \ exponents \  gens ICL
--Normalization of the Rees algebra of J
ICLJ=intclToricRing LJ
gens ICLJ
flatten \ exponents \  gens ICLJ
--Normality test for ideal I
sort L==sort gens ICL
--Normality test for ideal J
sort LJ==sort gens ICLJ
\end{verbatim}
\end{procedure}

\end{appendix}
\section*{Acknowledgments} 
We thank the referee for a careful
reading of the paper. 
We used \textit{Normaliz} \cite{normaliz2} and \textit{Macaulay}$2$
\cite{mac2} to give a normality test for monomial ideals and to
compute Hilbert bases.  

\bibliographystyle{plain}

\section*{Statements and Declarations}  The authors declare that no funds, grants, or other support were
received during the preparation of this manuscript. 

The authors have
no relevant financial or non-financial interests to disclose. 

Data sharing not applicable to this article as no datasets were
generated or analysed during the current study. 

\end{document}